\theoremstyle{plain}
\newtheorem{proposition}[subsubsection]{Proposition}
\newtheorem{lemma}[subsubsection]{Lemma}
\newtheorem{theorem}[subsubsection]{Theorem}
\newtheorem{corollary}[subsubsection]{Corollary}
\theoremstyle{definition}
\theoremstyle{remark}
\newtheorem{remark}[subsubsection]{Remark}
\newcommand{\PF}{Poisson\xspace} 
\newcommand{\Aff}{\operatorname{Aff}} 
\newcommand{\Hor}{\operatorname{Hor}} 
\newcommand{\Aut}{\operatorname{Aut}} 
\newcommand{\Stab}[2]{\operatorname{Stab}_{#2}({#1})} 
\newcommand{\NN}{\mathbb{N}} 
\newcommand{\PI}{\mathbb{N}_0} 
\newcommand{\I}{\mathbb{Z}} 
\newcommand{\supp}{\operatorname{supp}}
\newcommand{\sgr}{\operatorname{sgr}} 
\newcommand{\gr}{\operatorname{gr}} 
\newcommand{\pf}[2]{{#1}_*{#2}} 
\newcommand{\mm}[1]{m_1 \left ( {#1} \right )} 
\newcommand{\convpow}[2]{{#1}^{\ast #2}}
\newcommand{\EV}{\mathbb{E}} 
\newcommand{\Gmu}{(G,  \mu)} 
\newcommand{\BHluc}{H^\infty_{luc}} 
\newcommand{\Td}[1]{\mathcal{T}_{{#1}}}
\newcommand{\Tdp}{\mathcal{T}}
\newcommand{\flr}[1]{\left \lfloor {#1}  \right \rfloor} 
\newcommand{\imesi}[1]{\mu_{{#1}}}
\newcommand{\pathspace}{G^\NN}
\newcommand{\prodmeasure}{\mathbf{P}^\mu}
\newcommand{\pathmeasure}{\mathbb{P}^\mu}
\newcommand{\Vmma}{V_{--} \rtimes \langle \alpha \rangle}
\newcommand{\Vmm}{V_{--} }
\newcommand{\Vm}{V_{-} }
\newcommand{\bnd}{\mathbf{bnd}}
\begin{document}

\title[Random walks on products of trees and certain t.d.l.c. groups]{Random walks on products of trees and certain totally disconnected, locally compact groups}
\author{John J. Harrison}
\address{School of Mathematical and Physical Sciences, 
The University of Newcastle,
Callaghan NSW 2308, Australia}
\email{John.Harrison@newcastle.edu.au}
\date{\today}

\begin{abstract}
	The \PF boundary of a finite direct product of affine automorphism groups of homogeneous trees is considered. The \PF boundary is shown to be a product of ends of trees with a hitting measure for spread-out, aperiodic measures of finite first moment whose closed support generates a subgroup which is not \emph{fully exceptional}. The Poisson boundary of a semi-direct product $\Vmma$ for any automorphism $\alpha$ and tidy compact open subgroup $V$ in a locally compact, totally disconnected group $G$ is also shown to be the space of ends of the tree with the hitting measure under similar assumptions. Necessary and sufficient conditions for boundary triviality are given in both cases. The method of proof is largely an extension of the prior work of Cartwright, Kaimanovich and Woess on affine automorphism groups of homogeneous trees.
\end{abstract}
\maketitle
\tableofcontents

\section{Introduction and preliminaries}
Cartwright, Kaimanovich and Woess studied random walks on the \emph{affine automorphism group of a homogeneous tree}, which we denote by $\Aff \Tdp$, in \cite{cartwright94}. They established a law of large numbers, a central limit theorem and described the \PF boundary for measures which have finite first moment with respect to a gauge function, $| \cdot |_\Tdp$ and whose support generates a closed subgroup which are \emph{non-exceptional}. 

In the preliminary section, we recall conventions, definitions and results from the literature relating to totally disconnected, locally compact groups, random walks, the \PF boundary, gauges, gauge functions and the affine automorphism group of a homogeneous tree. Particular attention is given to the results of the Cartwright, Kaimanovich and Woess paper.

We extend their work in two directions. In the first part, we consider closed subgroups in a finite direct product of affine automorphism groups of homogeneous trees, $P = \prod_{i=1}^k \Aff \Td{i}$. We define \emph{partially exceptional} and \emph{fully exceptional} closed subgroups of $P$, and explore the relationships between these properties, the modular function, transience of the random walk and the scale function. To ensure transience, we restrict our analysis to random walks for probability measures on $P$ whose support generates a subgroup which is not fully exceptional.

When the probability measure on $P$ is spread-out, aperiodic, and has finite first moment with respect to a subadditive gauge function, $|\cdot|_P$, we show that the \PF boundary, $(B, \nu)$, is the direct product of the space ends of each tree with a hitting measure. We give necessary and sufficient conditions for boundary triviality. We conclude by showing that a transitive action of $\Gamma$ on the ends is equivalent to $\Gamma$ being compactly generated simplifies the proof that $(B, \nu)$ is the \PF boundary.

In the last part, we consider groups of the form $\Vmma$, where $V$ is a subgroup, tidy for an automorphism $\alpha$, of a totally disconnected, locally compact group. According to Baumgartner and Willis \cite{baumgartner04}, these groups act naturally on an infinite homogeneous tree with degree dependent on the scale of $\alpha$. We describe the \PF boundary for random walks on these groups in the case where the probability measure generates a \emph{non-extraordinary} subgroup, is spread-out and has finite first moment with respect to a gauge function closely related to $| \cdot |_\Tdp$.

\section{Preliminaries}

\subsection{Totally disconnected, locally compact groups}
Many definitions and results in this section are due to Willis \cite{willis94,willis01}. Let $G$ be any totally disconnected, locally compact (\emph{t.d.l.c}) group and let $\alpha$ be a continuous automorphism of $G$.   For each compact open subgroup $V$ of $G$, let 
\[ V_+ = \bigcap_{k \geq 0 } \alpha^k (V) \ \textrm{and} \ V_- = \bigcap_{k \geq 0} \alpha^{-k} (V). \]
The subgroup $ V_0 = V_+ \cap V_-$ is always closed. If $V = V_+ V_-$, then $V$ is \emph{tidy above} with respect to $\alpha$ and if the subgroup
\[ V_{++} = \bigcup_{k \geq 0} \alpha^k (V_+) \]
is closed, then $V$ is \emph{tidy below} with respect to $\alpha$. The subgroup 
\[ V_{--} = \bigcup_{k \geq 0} \alpha^{-k} (V_-) \]
is closed if and only if $V_{++}$ is closed. If $V$ is both tidy above and tidy below, then it is \emph{tidy for $\alpha$}.  Tidy compact open subgroups for $\alpha$ can be constructed from arbitrary compact open subgroups with a \emph{tidying procedure}, as detailed in Willis \cite{willis01scale}. The \emph{scale of $\alpha$} is the positive integer
\[ s(\alpha) := \min \{ [ \alpha (U) : U \cap \alpha(U) ] \mid U \ \textrm{is a compact open subgroup of} \ G \}. \]
Van Dantzig's Theorem ensures the existence of at least one compact open subgroup $U$. The scale is finite because $[ \alpha (U) : U \cap \alpha(U) ]$ is finite by the compactness of $U$. Any compact open subgroup $V$ satisfying $ s(\alpha) = [ \alpha (V) : V \cap \alpha(V) ]$ is \emph{minimizing for $\alpha$}\index{minimizing subgroup}. It was shown, by Willis \cite{willis04}, that every compact open subgroup which is tidy for $\alpha$ is minimizing for $\alpha$. Hence, the scale of $\alpha$ is given by
\[ s(\alpha) = [ \alpha (V) : V \cap \alpha(V) ] = [ \alpha (V_+) : V_+ ], \]
where $V$ is any compact open subgroup tidy for $\alpha$. 

Suppose that $g$ is a group element in $G$. The \emph{scale of $g$} is given by
\[ s(g) = s(\alpha_g), \]
where $\alpha_g$ is conjugation by $g$. The scale function $s$ from $G$ to the natural numbers satisfies \[ \Delta(g) = \frac{s(g)}{s(g^{-1})},\] where $\Delta$ is the modular function on $\Aut (G)$.  A subset of group elements $E$ is \emph{uniscalar} if the scale function is identically $1$ on $E$. It follows that uniscalar groups are unimodular.

\subsection{Random walks and the Poisson boundary}
Suppose that $G$ is a second countable locally compact group with identity $e$ and that $\mu$ is a probability measure on $G$. Any such pair $(G, \mu)$ is called a \emph{random walk}.

The \emph{space of trajectories} is the set $G^\NN$ with the product $\sigma$-algebra, where $G^\NN$ is infinite Cartesian product of countably many copies of $G$. An element $\omega$ in $G^\NN$ is a \emph{trajectory} or \emph{path}. 

We denote by $\pathmeasure$ the pushforward of  $\mu^\NN$  with respect to the map $S$ on $\pathspace$,  $\pathmeasure = \mu^\NN \circ S^{-1}$, given by
\[ S(\omega_1, \omega_2, \omega_3, \dots, \omega_k, \dots) = (\omega_1, \omega_1 \omega_2, \omega_1 \omega_2 \omega_3, \dots,  \omega_1 \dots \omega_k , \dots ). \] 
 The measure  $\pathmeasure$ is called the \emph{path measure}, and the pair $(\pathspace, \pathmeasure)$ is the \emph{path space}. We identify the random walk $\Gmu$ with a discrete time-homogeneous Markov chain $\{R_i\}_{i \in \PI }$, called the \emph{right random walk}, where each random variable $R_n$ is the projection from the path space,
\[ R_n(\omega) = \omega_n   \]
The group $G$ acts diagonally on elements of the path space. This action extends to one on probability measures on $G^\NN$, namely, if $m$ is any probability measure on $G^\NN$, then 
\[ g \cdot m(E)  = m(g^{-1} E) \]
for each measurable set $E$ and $g$ in $G$. 

There are many equivalent definitions of the \PF boundary of a random walk. See e.g. Erschler \cite{erschler10}, Furstenberg \cite{furstenberg63,furstenberg71}, or Kaimanovich and Vershik \cite{kaimanovich83}.

If $X$ is a topological space, then the pair $(X, \cdot)$ is a \emph{$G$-space} if $G$ acts on $X$ and the map $(g,f) \mapsto g \cdot f$ from $G \times X$ to $X$ is continuous with respect to the product topology on $G \times X$. 

A measure $\nu$ on a $G$-space $B$ is said to be \emph{$\mu$-stationary}  if $\mu \ast \nu = \nu$. A $G$-space $B$ equipped with a $\mu$-stationary measure $\nu$ is called a \emph{$\Gmu$-space}.

Let $B$ be a second countable $G$-space and $(B, \nu)$ be a $\Gmu$-space.  Then, $(B, \nu)$ is said to be a \emph{$\mu$-boundary} if there exists a random variable $\bnd$ from $\pathspace$ to $B$,  called the \emph{boundary map}, such that $R_n(\omega) \cdot \nu$ converges in the weak* topology to a point measure $\delta_{\bnd(\omega)}$ for $\pathmeasure$-almost every $\omega$ in $\pathspace$.

A function $f$ in $L^\infty (G)$ is a bounded $\mu$-harmonic function if it satisfies the convolution identity $f = f \ast \mu$. The set of all bounded $\mu$-harmonic functions with pointwise addition, complex conjugation and the multiplication
\[ \lim_{n \rightarrow \infty}  \left ( (f\cdot g) \ast \convpow{\mu}{n} \right )  (x)\]
is a $C^*$ algebra, which we denote by $\BHluc\Gmu$. If $B$ is a compact $\Gmu$-space with a $\mu$-stationary measure $\nu$, then 
\[ P_\nu (\varphi)(g) = \int_B \varphi(g b) \, d \nu(b).  \] 
is an element of  $\BHluc\Gmu$. The map $ P_\nu$ is called the \emph{Poisson transform}. A second countable $\Gmu$-space $(B, \nu)$ is a $\mu$-boundary if and only if Poisson transformation $P_\nu$ is a $\ast$-homomorphism.

A continuous map $\gamma$ from a $G$-space $B$ to a $G$-space $B'$ is \emph{equivariant} if $\gamma(g \cdot b) = g \cdot \gamma (b)$ for all $b$ in $B$ and $g$ in $G$. If $(B, \nu)$ and $(\bar{B}, \bar{\nu})$ are $\mu$-boundaries of the random walk $(G, \nu)$ then $(\bar{B}, \bar{\nu})$ is an \emph{equivariant image of $(B, \nu)$} if there exists an equivariant map $\gamma$ from $B$ to $B'$, such that the pushforward measure $\pf{\gamma}{\nu}$ is equal to $\bar{\nu}$.   

Given any random walk $\Gmu$,  there is a $\mu$-boundary $(\Pi_\mu, \nu)$, such that the Poisson map $P_\nu$ is an isometric *-isomorphism and every other $\mu$-boundary $(B, \eta)$ is an equivariant image of $(\Pi_\mu, \nu)$. 

A \emph{gauge} is an increasing sequence $\mathcal{A}$ of measurable sets $\mathcal{A}_j$ which exhaust $G$. A \emph{gauge function} is a non-negative integer-valued function $\delta$ for which there exists a non-negative constant $K$, such that 
\[ \delta(g h) \leq \delta (g) + \delta (h) + K \]
for all $g$ and $h$ in $G$. A gauge function is subadditive if 
\[ \delta(g h) \leq \delta (g) + \delta (h) \]
for all $g$ and $h$ in $G$.  Let $\mathcal{A} = \{ \mathcal{A} \}_{i=1}^\infty$ be a gauge.  Then, $\mathcal{A}$ is \emph{subadditive} if the \emph{gauge map}
\[ | \gamma |_{\mathcal{A}} = \min \left \{k \in \NN: \gamma \in \mathcal{A}_k \right \} \]
is a subadditive gauge function. If $\delta$ is a gauge function, then the sequence $\mathcal{A}^\delta = \{ \mathcal{A}^\delta_i \}_{i=1}^\infty$ given by
\[ \mathcal{A}^\delta_j = \{ g \in G : \delta(g) \leq j \} \] 
is a gauge. If $\delta$ is a subadditive gauge function, then $\mathcal{A}^\delta_j$ is a subadditive gauge. We say that the gauge $\mathcal{A}$ is \emph{$C$-temperate}, or just \emph{temperate}, if $\lambda_G (\mathcal{A}_j) \leq e^{Cj}$ for all natural numbers $k$ and some positive real number $C$.  A sequence of gauges $\mathcal{A}^{(j)}$ is \emph{uniformly temperate} if there is a positive real number $C$, such that $\mathcal{A}^{(j)}$ is $C$-temperate for each natural number $j$.  See Kaimanovich \cite{kaimanovich91,kaimanovich00} for more details about gauges and gauge functions. 

We make use of a formulation of Kaimanovich's ray criterion for topological groups, given in Cartwright, Kaimanovich and Woess \cite{cartwright94} without proof. Kaimanovich has indicated to the author in personal correspondence that the relevant preprint has not yet been published.

\begin{theorem}[Kaimanovich's ray criterion for topological groups \cite{cartwright94, kaimanovich94}]
	\label{thm:kaimanovichapproxthmtop}
	Let $G$ be a second countable Hausdorff topological group. Let $\mu$ be an aperiodic, spread-out probability measure on $G$ with finite first moment with respect to a subadditive gauge $\mathcal{A}$ on $G$. Suppose that $(B, \nu)$ is a $\mu$-boundary with boundary map $\bnd$. If for $\nu$ almost every point $b$ in $B$ there is a uniformly temperate sequence of gauges $\mathcal{G}^n = \mathcal{G}^n(b)$, such that 
	\[ \frac{1}{n} | \omega_n |_{\mathcal{G}^n (\bnd (\omega))} \rightarrow 0  \]
	for almost every path $\omega = (\omega_1, \omega_2, \dots) $, then $(B, \nu)$ is the \PF boundary of the pair $\Gmu$.  
\end{theorem}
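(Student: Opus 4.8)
The plan is to show that $(B,\nu)$ is \emph{maximal} among all $\mu$-boundaries of $\Gmu$, since by the universal property recalled above this is exactly what it means for $(B,\nu)$ to be the \PF boundary. As $(B,\nu)$ is assumed to be a $\mu$-boundary, its Poisson transform $P_\nu\colon L^\infty(B,\nu)\to\BHluc\Gmu$ is a $\ast$-homomorphism, and in fact injective because $\bnd_*\pathmeasure=\nu$; so the content is that $P_\nu$ is \emph{onto}, i.e.\ that every bounded $\mu$-harmonic function agrees $\pathmeasure$-almost everywhere with a function of $\bnd$ alone. Identifying the \PF boundary with the tail (exit) boundary of the right random walk and again using $\bnd_*\pathmeasure=\nu$, this is equivalent to the assertion that for $\nu$-almost every $b$ the random walk \emph{conditioned on $\bnd(\omega)=b$} has trivial tail $\sigma$-algebra; I would take this last reformulation as the target.

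The argument then turns on a comparison between two gauges applied to the displacement $\omega_n$. Finite first moment of $\mu$ with respect to the subadditive gauge $\mathcal A$ makes Kingman's subadditive ergodic theorem applicable to $n\mapsto|\omega_n|_{\mathcal A}$, so the rate of escape $\ell=\lim_n\tfrac1n|\omega_n|_{\mathcal A}$ exists and is finite $\pathmeasure$-almost surely, while $\mu$ being aperiodic and spread out gives that some power $\convpow{\mu}{k}$ is non-singular with respect to a Haar measure $\lambda_G$, so that $n$-step distributions have densities and the size of a region can be quantified by $\lambda_G$. On the other side, for $\nu$-almost every $b$ the hypothesis supplies a uniformly $C$-temperate family $\mathcal G^n=\mathcal G^n(b)$, hence $\lambda_G(\mathcal G^n_j)\le e^{Cj}$ for all $n$ and $j$, together with $\tfrac1n|\omega_n|_{\mathcal G^n(\bnd(\omega))}\to0$. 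The latter says that, conditionally on $\bnd(\omega)=b$, for each $\varepsilon>0$ the walk at time $n$ lies in $\mathcal G^n_{\lfloor\varepsilon n\rfloor}(b)$ with conditional probability tending to $1$; since that set has Haar measure at most $e^{C\varepsilon n}$, letting $\varepsilon\to0$ afterwards shows that the conditional $n$-step distributions concentrate on regions of subexponential Haar measure.

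From here I would invoke --- or, where it is not yet available in published form, establish --- the entropy-free, gauge-theoretic substitutes for the Shannon--McMillan--Breiman theorem and the Kaimanovich--Vershik entropy criterion \cite{kaimanovich83,kaimanovich91,kaimanovich00}: for a spread-out aperiodic measure of finite first moment with respect to a subadditive gauge there is an asymptotic ``equipartition'' rate $h\ge0$ with $\convpow{\mu}{n}$ concentrated on regions of Haar measure $e^{hn+o(n)}$, while conditioning on $\bnd=b$ contracts this to $e^{(h-E)n+o(n)}$ for a conditional ``gauge entropy'' $E=E(B,\nu)\in[0,h]$, and $(B,\nu)$ is the \PF boundary exactly when $E=h$. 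The concentration estimate of the previous paragraph shows that the conditional distributions live on $e^{o(n)}$-regions, forcing $h-E\le0$ and hence $E=h$. The passage from this concentration to triviality of the conditional tail --- which is the form I actually want --- is then a Borel--Cantelli argument along a sufficiently sparse subsequence of times, the temperateness of the $\mathcal G^n$ being precisely what renders the relevant volume bounds summable, supplemented by the usual zero-or-two law for tail $\sigma$-algebras of the conditional chains.

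The hard part is not this combinatorics but the entropy-free step itself: transporting Shannon--McMillan--Breiman and the entropy criterion to gauges, and carrying it out for a second-countable topological group rather than a countable discrete one. This is where the remaining hypotheses are spent --- $\mu$ being spread out to produce the densities the equipartition argument differentiates, aperiodicity to exclude periodic obstructions to the dichotomy, finite first moment with respect to a subadditive gauge to make Kingman's theorem and the rate of escape available, and second countability for the measurable selection of the conditional measures $\pathmeasure_b$ and of the ray data $b\mapsto\mathcal G^n(b)$. Everything else --- the martingale convergence underlying the boundary map $\bnd$, the reduction to conditional triviality, and the Borel--Cantelli summation --- is routine, and for the precise form of the equipartition statements I would follow Kaimanovich's treatment of gauges as used in \cite{cartwright94,kaimanovich91,kaimanovich00}.
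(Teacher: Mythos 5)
The first thing to note is that the paper does not prove this statement at all: it is imported from Cartwright, Kaimanovich and Woess \cite{cartwright94}, where it is likewise stated without proof, the full argument residing in an unpublished preprint of Kaimanovich \cite{kaimanovich94} (the paper says so explicitly just before the statement). So there is no internal proof to measure your attempt against; the relevant question is whether your sketch would stand on its own.

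It would not, as written, because the step you yourself isolate as ``the hard part'' is not a technical lemma that can be deferred --- it is essentially the theorem. Your outline reproduces the standard architecture of Kaimanovich's entropy and ray criteria: reduce maximality of $(B,\nu)$ to triviality of the tail $\sigma$-algebra of the walk conditioned on $\bnd(\omega)=b$; observe that the hypothesis together with uniform temperateness confines the conditional $n$-step distributions to sets of Haar measure $e^{o(n)}$; and conclude via an equipartition identity that the conditional entropy equals the unconditional one. The first two steps are sound --- the concentration estimate obtained by combining $\tfrac1n|\omega_n|_{\mathcal G^n(\bnd(\omega))}\to 0$ with $\lambda_G(\mathcal G^n_j)\le e^{Cj}$ is correct and is exactly how the temperateness hypothesis is meant to be spent. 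But the third step, namely the gauge-theoretic Shannon--McMillan--Breiman theorem and the conditional entropy criterion for second countable, non-discrete locally compact groups, is precisely the content of the unpublished work being cited, and it is genuinely delicate in that setting: differential entropies relative to Haar measure need not be nonnegative, the dichotomy between full and deficient conditional entropy requires absolute continuity and integrability of the densities of $\convpow{\mu}{n}$ (spread-out only gives a density for some power of $\mu$, which is weaker than what a naive equipartition argument consumes), and the zero--two law for the conditional chains is not available off the shelf in this generality. Writing that you would ``invoke --- or establish'' the entropy-free substitutes at that point leaves the proof with a hole exactly where the paper itself declined to fill one. Your proposal is a correct map of where the proof must go, but it is a map, not a proof.
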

\begin{corollary}
	\label{cor:kaimanovichapproxthmtop}
	Let $G$, $\mu$, $ B$, $\nu$ and $\gamma$ be as in Theorem \ref{thm:kaimanovichapproxthmtop}. Suppose that $G$ is generated by a compact set $K$. Let $d$ be the word length metric. If there exists a sequence of measurable \emph{approximation maps} $\Pi_m \colon B \rightarrow G$, such that
	\[ \frac{1}{n} d(\Pi_m (\bnd (\omega)), \omega_1 \omega_2 \dots \omega_n ) \rightarrow 0  \]
	for almost every path $\omega = (\omega_1, \omega_2, \dots) $, then $(B, \nu)$ is the \PF boundary of the pair $\Gmu$.  
\end{corollary}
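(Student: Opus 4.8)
The plan is to deduce this directly from Theorem~\ref{thm:kaimanovichapproxthmtop}, the device being to turn the word length metric into a uniformly temperate sequence of gauges centred at the approximation points $\Pi_n(b)$. We may assume without loss of generality that $K=K^{-1}$, that $e\in K$, and — after enlarging $K$ by a compact neighbourhood of $e$ — that $K$ has non-empty interior; replacing the compact generating set only changes the word metric $d$ by bounded multiplicative and additive amounts, so the hypothesis $\tfrac1n d(\Pi_n(\bnd(\omega)),\omega_1\omega_2\cdots\omega_n)\to 0$ is unaffected. For each $b\in B$ and $n\in\NN$ put $\mathcal{G}^n_j(b):=\Pi_n(b)\,K^j$ for $j\ge 1$ and $\mathcal{G}^n(b):=\{\mathcal{G}^n_j(b)\}_{j\ge 1}$. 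Since $e\in K$ and $K$ generates $G$, each $\mathcal{G}^n(b)$ is an increasing family of compact (hence measurable) sets exhausting $G$, that is, a gauge, and it depends measurably on $b$ because $\Pi_n$ does and $K$ is fixed. By left invariance of the word length, the associated gauge map is $\lvert g\rvert_{\mathcal{G}^n(b)}=\min\{\,j\ge 1:\Pi_n(b)^{-1}g\in K^j\,\}=\max\bigl(1,\,d(\Pi_n(b),g)\bigr)$, so the hypothesis of the corollary says exactly that $\tfrac1n\lvert\omega_1\omega_2\cdots\omega_n\rvert_{\mathcal{G}^n(\bnd(\omega))}\to 0$ for $\pathmeasure$-almost every path (with $\omega_1\cdots\omega_n$ playing the role of $\omega_n$ in Theorem~\ref{thm:kaimanovichapproxthmtop}).

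It remains to verify that the sequence $(\mathcal{G}^n(b))_n$ is uniformly temperate, with a constant independent of $n$ and $b$. As Haar measure is left invariant, $\lambda_G(\mathcal{G}^n_j(b))=\lambda_G(\Pi_n(b)K^j)=\lambda_G(K^j)$, so this comes down to the standard fact that a compactly generated, locally compact group has at most exponential volume growth: there is $C>0$ with $\lambda_G(K^j)\le e^{Cj}$ for all $j$. (One notes $\lambda_G(K)<\infty$ by compactness, then covers $K^2$ by finitely many left translates of $\operatorname{int}(K)$ to obtain $\lambda_G(K^{j+1})\le c\,\lambda_G(K^j)$ with $c$ depending only on $K$, and iterates.) With such a $C$, every $\mathcal{G}^n(b)$ is $C$-temperate, so the sequence is uniformly temperate, and the same $C$ serves all $b$ simultaneously. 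All hypotheses of Theorem~\ref{thm:kaimanovichapproxthmtop} are then in place — $G$, $\mu$, $B$, $\nu$ and $\bnd$ by assumption, and the gauges $\mathcal{G}^n(b)$ just constructed — and the theorem yields that $(B,\nu)$ is the \PF boundary of $\Gmu$.

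The only ingredient that is not pure bookkeeping is the exponential volume bound $\lambda_G(K^j)\le e^{Cj}$; once that is granted, the remainder (symmetrising $K$, the measurability of $b\mapsto\mathcal{G}^n(b)$ and of $\omega\mapsto\lvert\omega_1\cdots\omega_n\rvert_{\mathcal{G}^n(\bnd(\omega))}$, and the identification of the gauge map with the displacement $d(\Pi_n(b),\cdot)$) is routine, so I expect the written proof to be essentially the two paragraphs above.
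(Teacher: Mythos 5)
The paper gives no proof of this corollary---it is stated as an immediate specialisation of Theorem \ref{thm:kaimanovichapproxthmtop}---so there is no argument of the paper's to compare against; your proposal supplies the missing derivation and it is correct. Setting $\mathcal{G}^n_j(b)=\Pi_n(b)K^j$, identifying the gauge map with $\max\bigl(1,d(\Pi_n(b),\cdot)\bigr)$, and obtaining uniform temperateness from translation invariance together with the exponential volume growth $\lambda_G(K^{j+1})\le M\,\lambda_G(K^j)$ (your covering of $K^2$ by translates of $\operatorname{int}(K)$ does give this, via $K^{j+1}=K^2K^{j-1}\subseteq\bigcup_i x_i K^j$) is exactly the reduction that the paper's later Remark following Proposition \ref{prop:transcompactgen} presupposes. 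The one point worth making explicit: the identity $\lambda_G(\Pi_n(b)K^j)=\lambda_G(K^j)$ requires $\lambda_G$ to be a \emph{left} Haar measure. The groups in this paper are deliberately non-unimodular, and elsewhere the paper normalises a \emph{right} Haar measure, under which the left translate $\Pi_n(b)K^j$ acquires a modular factor $\Delta(\Pi_n(b))^{\pm1}$ that typically grows exponentially in $n$ and would destroy uniform temperateness. The paper's own verification in Proposition \ref{prop:sequniformlytemperategauges} implicitly makes the same left-invariant choice, so your reading is the consistent one, but the phrase ``as Haar measure is left invariant'' should be sharpened to say which Haar measure is meant.
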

\subsection{The affine automorphism group of a homogeneous tree}
\label{sec:affhmtree}
Suppose that $d$ is a natural number greater than one. Let $\Tdp$ be the \emph{homogeneous tree of valency $d$}, the unique connected graph with no cycles for which every vertex has $d$ neighbours. We suppose that $d$ is at least $3$, and use the term \emph{tree} to mean a homogeneous tree with some valency.  Let $o$ be a distinguished vertex of $\Tdp$.

A \emph{path} in a tree is a sequence of successive neighbouring vertices without backtracking. Paths may be \emph{finite}, \emph{singly infinite} or \emph{doubly infinite}. The \emph{length} of a finite path $v = \{v_i \}_{i=j}^{k}$ is denoted by $|v|$ and equal to $k-j$. The \emph{length} of an infinite path is $+\infty$. Let $v$ and $w$ be vertices of the tree $\Tdp$. The \emph{geodesic segment between $v$ and $w$} is the unique finite path, $\overline{vw}$, from $v$ to $w$. The \emph{distance} between $v$ and $w$, $d(v,w)$ is the number of edges between the two vertices, $\left | \overline{vw} \right | - 1$. The distance from the distinguished vertex $o$ to a vertex $v$ in $\Tdp$ is $|v| = d(o,v)$.

A \emph{geodesic ray} in a tree is a singly infinite sequence of successive neighbours without backtracking. Two geodesic rays are \emph{equivalent} if their intersection is infinite. Each equivalence class of rays is an \emph{end}. The set of ends of a tree $\Tdp$ is denoted by $\partial \Tdp$ and the disjoint union of the tree with its ends by $\Tdp \cup \partial \Tdp$. If $\xi$ is in $\Tdp \cup \partial \Tdp$ and $v$ is in $\Tdp$, then there is a unique geodesic ray, $\overline{v \xi}$ starting at $v$ which represents $\xi$.  Let $\omega$ be a distinguished end of the tree $\Tdp$.  The set of all ends except $\omega$ is denoted by $\partial^* \Tdp$.

Let $u$ and $v$ be elements of $\Tdp \cup \partial \Tdp$. Denote by $u \curlywedge v$ the last common element on the geodesics $\overline{ou}$ and $\overline{ow}$, unless $u = v$ and they are both in $\partial{\Tdp}$, in which case set $u \curlywedge v = u$. Let 
\[ \theta(u,v) = \begin{cases} q^{-|u \curlywedge v|}  & \textrm{if} \ u \neq v, \\ 0 & \textrm{if} \ u = v. \end{cases} \]
\begin{lemma}
	The map $\theta$ is an ultrametric on $\Tdp \cup \partial \Tdp$. With the topology from $\theta$, $\Tdp \cup \partial \Tdp$ is a totally disconnected, compact space. The set $\Tdp $ is an open, dense and discrete subset in $\Tdp \cup \partial \Tdp$ and $\partial \Tdp$ is a closed, compact subset of $\Tdp \cup \partial \Tdp$. 
\end{lemma}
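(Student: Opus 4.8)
The plan is to reduce every assertion to the combinatorics of geodesics issuing from the root $o$. The first observation is that for $u,v\in\Tdp\cup\partial\Tdp$ the element $u\curlywedge v$ is the terminal vertex of the longest common initial segment of $\overline{ou}$ and $\overline{ov}$, so $|u\curlywedge v|$ is the length of that shared prefix. Granting this, non-negativity, symmetry, and the equivalence $\theta(u,v)=0\iff u=v$ are immediate (the convention for $\curlywedge$ when $u=v\in\partial\Tdp$ is exactly what is needed to make the second clause of the definition of $\theta$ consistent). The only axiom with content is the strong triangle inequality, which amounts to the confluence estimate
\[ |u\curlywedge w|\ \ge\ \min\{\,|u\curlywedge v|,\ |v\curlywedge w|\,\}\qquad(u,v,w\in\Tdp\cup\partial\Tdp). \]

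To establish this I would argue directly on prefixes. If two of $u,v,w$ coincide the inequality is trivial (the corresponding value of $\theta$ is $0$), so assume they are pairwise distinct; put $a=|u\curlywedge v|$ and $b=|v\curlywedge w|$ and suppose $a\le b$. Then $\overline{ov}$ agrees with $\overline{ou}$ along its first $a$ edges and with $\overline{ow}$ along its first $b\ge a$ edges; comparing, $\overline{ou}$ and $\overline{ow}$ agree along their first $a$ edges, whence $|u\curlywedge w|\ge a=\min\{a,b\}$. Translating back, $\theta(u,w)=q^{-|u\curlywedge w|}\le q^{-\min\{a,b\}}=\max\{\theta(u,v),\theta(v,w)\}$, so $\theta$ is an ultrametric.

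Next I would describe the balls. Since $\theta$ takes values in $\{0\}\cup\{q^{-n}:n\ge0\}$, the closed ball of radius $q^{-n}$ about $u$ equals $\{\xi:|u\curlywedge\xi|\ge n\}$, which is the \emph{shadow} of the depth-$n$ vertex $w_n$ of $\overline{ou}$ — the set of all $\xi$ whose geodesic from $o$ runs through $w_n$ — and reduces to $\{u\}$ when $u$ is a vertex with $|u|<n$. In an ultrametric space every ball is clopen, so these shadows (together with the isolated vertices) form a basis of clopen sets; hence $\Tdp\cup\partial\Tdp$ is zero-dimensional and so totally disconnected. For $\xi\neq v$ a vertex one has $|v\curlywedge\xi|\le|v|$, i.e. $\theta(v,\xi)\ge q^{-|v|}$, so the open ball of radius $q^{-|v|}$ about $v$ is $\{v\}$: every vertex is isolated, so $\Tdp$ is discrete and, being a union of open singletons, open; consequently $\partial\Tdp$ is closed. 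Density of $\Tdp$ follows since the successive vertices $o=v_0,v_1,v_2,\dots$ of $\overline{o\xi}$ satisfy $\theta(v_n,\xi)=q^{-n}\to0$, so every end is a limit of vertices.

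It remains to prove compactness, for which I would establish sequential compactness and invoke metrizability. Here local finiteness of $\Tdp$ (valency $d$) enters: for each $n$ the shadows of the finitely many vertices at depth $n$, together with the finitely many singletons $\{v\}$ with $|v|<n$, partition $\Tdp\cup\partial\Tdp$ into finitely many clopen pieces of diameter $\le q^{-n}$, each piece contained in one at the previous level. Given a sequence, a pigeonhole/diagonal argument yields a nested chain of such pieces $C_0\supseteq C_1\supseteq\cdots$, each containing infinitely many terms, and a subsequence with $n$-th term in $C_n$ is Cauchy; if some $C_n$ is a singleton the subsequence is eventually constant, and otherwise the centres of the $C_n$ trace a geodesic ray from $o$ whose end is the single point of $\bigcap_n C_n$, and the subsequence converges to it. Thus $\Tdp\cup\partial\Tdp$ is compact, and $\partial\Tdp$, closed in a compact space, is compact. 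I expect the confluence estimate to be the only genuinely content-bearing step; the topological claims are then routine consequences of living in an ultrametric space together with local finiteness, the one mild nuisance being the uniform bookkeeping of vertices versus ends throughout the case analyses.
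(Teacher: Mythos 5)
Your proposal is correct and follows essentially the same route as the paper: the confluence estimate $|u\curlywedge w|\ge\min\{|u\curlywedge v|,|v\curlywedge w|\}$ for the ultrametric inequality, isolated vertices for discreteness and openness of $\Tdp$, geodesic rays for density, and sequential compactness via local finiteness and pigeonhole. The only differences are cosmetic: you actually prove the confluence estimate (which the paper merely asserts) and organize the compactness argument through nested clopen partitions rather than the paper's bounded/unbounded dichotomy on $|v_i|$.
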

\begin{proof}
	The ultrametric inequality follows from the identity 
	\[ |u \curlywedge v| \geq \min \{ |u \curlywedge w|, |v \curlywedge w|  \}  \]
	for all $u$, $v$ and $w$ in $\Tdp \cup \partial \Tdp$. Every ultrametric space is totally disconnected. 
	
	If $u$ is a point in $\partial \Tdp$, then, for every natural number $k$,
	\begin{align*}
	B_{q^{-k}} \cap T &= \{ v \in T, \theta(u,v) \leq q^{-k} \} \\
	&= \{ v \in T, |u \curlywedge v| \geq k \}
	\end{align*}
	is always non-empty. Hence, $T$ is dense in $\Tdp \cup \partial \Tdp$.	
	
	Suppose that $v = \{v_i\}_{i=1}^\infty$ is a sequence in $\Tdp \cup \partial \Tdp$. If the sequence $\{|v_i|\}_{i=1}^\infty$ is bounded, then $v$  visits only finitely many points, so it has a convergent subsequence. If $\{|v_i|\}_{i=1}^\infty$ is unbounded, then there is an end $w$, such that for every natural number $k$, the open ball of radius $q^{-k}$ around $w$, $U_k = B_{q^{-k}}(w)$, contains infinitely many points from $v$. Choose a sequence of natural numbers $\{ n_i \}_{i=1}^\infty$, such that $v_{n_k}$ is in $U_k$. Then, $\{v_{n_i}\}_{i=1}^\infty$ is a subsequence of $v$ convergent to $w$. So $\Tdp \cup \partial \Tdp$ is sequentially compact, hence compact.

	Suppose that $u$ is a point in $T$. Let $k$ be any natural number greater than $|u|$. Then, 
	\[ B_{q^{-k}}(u) = \{ v \in \Tdp \cup \partial \Tdp : |u \curlywedge v| \geq k \} \cup \{ u \}.  \]
	Suppose that $v$ is an element of  $\Tdp \cup \partial \Tdp$ distinct from $u$. If $|u \curlywedge v| = 0$, then $v$ is not in $B_{q^{-k}}(u)$. 	If $|u \curlywedge v| \neq 0$, then there exists an end $w$ in $\partial \Tdp$, such that $u$ is in $\overline{o w}$ and either $u \curlywedge v = u \curlywedge w = u$ or $u \curlywedge v = v \curlywedge w$.	In the first case, 
	\[ k > | u \curlywedge v | = | u | \]
	and in the second case,
	\[ k > | u \curlywedge w | = | u \curlywedge v |. \]
	So $\{ v \in \Tdp \cup \partial \Tdp : |u \curlywedge v| \geq k \}$ is empty and $ B_{q^{-k}}(u)$ is just the singleton containing $u$.  Since $u$ was an arbitrary vertex, $T$ is discrete and open. Since $\partial \Tdp$ is the compliment of an open set, it is a closed, compact subset  of $\Tdp \cup \partial \Tdp$.
\end{proof}
An \emph{automorphism} $\varphi$ of the tree $\Tdp$ is a permutation of the vertex set of $\Tdp$, such that
\[ d(\varphi(v), \varphi(w)) = d(v,w) \]
for all vertices $v$ and $w$ in $\Tdp$. The set of all automorphisms of $\Tdp$ forms a group under function composition, called the \emph{automorphism group of $\Tdp$} and denoted by $\Aut \Tdp$. 

Tits \cite{tits70} showed that every automorphism of a tree is either \emph{elliptic}, fixing a vertex or inverting an edge, or \emph{hyperbolic}, acting as a translation along a doubly infinite path. Every automorphism of $\Tdp$ extends naturally to a homeomorphism of $\Tdp \cup \partial \Tdp$.

The automorphism group of $\Tdp$ is a totally disconnected, locally compact group with the \emph{compact-open topology}. The open neighbourhoods of this topology are unions of sets of the form
\[ \mathcal{U}(\alpha, \mathcal{F}) = \left \{ \beta \in \Aut \Tdp : \alpha v = \beta v \ \textrm{for all} \ v \ \textrm{in} \ \mathcal{F} \right \}, \] 
where $\mathcal{F}$ is any finite set of vertices and $\alpha$ is any automorphism of $\Tdp$. The compact open topology is also known as the \emph{topology of pointwise convergence}, because a sequence of automorphisms $\alpha_n$ converges to $\alpha$ if and only if for every finite set of vertices $F$, $\alpha_n(v)$ and $\alpha(v)$ eventually agree for all $v$ in $F$. As the degree of each vertex in $\Tdp$ is finite, the \emph{stabilizer of a vertex $v$} in $\Tdp$, 
\[ \operatorname{stab}_{\Aut \Tdp}(v) = \{ \beta \in \Aut \Tdp, \beta{v} = v \} \]
is a profinite group because it is isomorphic to an iterated wreath product of finite groups. All profinite groups are compact. If $\mathcal{F}$ is any finite set of vertices and $\alpha$ is an automorphism, then
\[ \mathcal{U}(\alpha, \mathcal{F}) = \bigcap_{v \in \mathcal{F}} \operatorname{stab}_{\Aut \Tdp}(\alpha v), \]
so each set $\mathcal{U}(\alpha, \mathcal{F})$ is compact. It follows that $\Aut \Tdp$ is locally compact. It is totally disconnected because there are arbitrarily small compact, open, hence closed, neighbourhoods of every point.  The topology is Hausdorff, because if $\alpha$ and $\beta$ are distinct automorphisms in $\Aut \Tdp$, then there is a vertex $v$, such that $\alpha v \neq \beta v$, so $\mathcal{U}(\alpha, \{v\}) $ and $\mathcal{U}(\beta, \{v\}) $ are neighbourhoods of $\alpha$ and $\beta$ respectively, but
\[ \mathcal{U}(\alpha, \{v\}) \cap \mathcal{U}(\beta, \{v\}) = \varnothing. \]
Let $v = \{v_i \}_{i=1}^\infty$ be a breadth-first sequence containing every vertex in $\Tdp$, i.e. such that $|v_i| \leq |v_j|$ whenever $i$ is less than $j$. Let $\mathcal{P}_k$ be the power set of the first $k$ vertices in $v$. Then, there is a natural number $k$, such that the set
\[ U_k = \left \{ \mathcal{U}(\alpha, \mathcal{F}) : \alpha \in \Aut \Tdp, \mathcal{F} \in \mathcal{P}_k \right \} \] 
contains $\mathcal{U}(\alpha, \mathcal{F})$ for any $\alpha$ in $\Aut \Tdp$ and finite set of vertices $\mathcal{F}$. It follows that $\Aut \Tdp$ is $\sigma$-compact and second countable. Let $\lambda_{\Aut \Tdp}$ be the right Haar measure on $\Aut \Tdp$, normalised so that vertex stabilizers have measure $1$. Since $\Aut \Tdp$ is not compact, $\lambda_{\Aut \Tdp}$ is an infinite measure.

Suppose that $\omega$ is a distinguished end of $\Tdp$. Let the \emph{Busemann function}, $h$ be the map from the vertices of $\Tdp$ to the natural numbers given by
\[ h(v) = d(v,c) - d(o,c),  \]
for each vertex $v$, where $c$ is the first common vertex on the geodesic rays $\overline{x \omega}$ and $\overline{o \omega}$. The \emph{horocyclic map} $\phi$ from $\Aut \Tdp$ to $\I$ given by 
\[ \phi(\alpha) = h(\alpha o) = h(\alpha v) - h(v) \]
for any vertex $v$ in $\Tdp$, is a well defined homomorphism, because automorphisms of $\Tdp$ are distance preserving under $d$, hence
\[ h(u) - h(v) = h(\alpha u) - h(\alpha v) \]
for all automorphisms $\alpha$ in $\Aut \Tdp$ and vertices $u$ and $v$ in $\Tdp$.
The \emph{horocyclic group}, denoted by $\Hor \Tdp$, is the kernel of $\phi$; the closed subgroup of $\Aut \Tdp$ which preserves each set of \emph{horocycles}, 
\[ H_m = \{ v \in \Tdp : h(v) = m  \}. \]
An element of $\Hor \Tdp$ is a \emph{horocyclic} automorphism. Every horocyclic automorphism $\alpha$ fixes the common ancestor of $v$ and $\alpha v$ for every vertex $v$ in $\Tdp$.

The \emph{affine group of the homogeneous tree} $\Tdp$ is the closed subgroup of all automorphisms of $\Tdp$ that fix the distinguished end $\omega$, with the subspace topology, and is denoted by $\Aff \Tdp$. The subspace topology on $\Aff \Tdp$ is second countable, locally compact,  totally disconnected and $\Aff \Tdp$ is $\sigma$-compact. 

 The affine group of $\Tdp$ is an internal semi-direct product, isomorphic to $\I \ltimes \Hor {\Tdp}$. To see this, fix an element $\sigma$ in $\Aff \Tdp$, such that $\sigma(o)$ is a child of $o$ and $\phi(\sigma) = 1$. The intersection of $\langle \sigma \rangle \cong \I$ and $\Hor \Tdp$ contains only the identity, and every element $\gamma$ in $\Aff \Tdp$ can be written as
\[ \gamma = \left ( \gamma \sigma^{-\phi(\gamma)} \right ) \sigma^{\phi(\gamma)} \]
where $\gamma \sigma^{-\phi(\gamma)}$ is in $\Hor \Tdp$ and $\sigma^{\phi(\gamma)}$ is in $\langle \sigma \rangle \cong \I$.

Given an element $\gamma$ in $\Aff \Tdp$, let $| \gamma |_{\Tdp} = | \gamma o |$. The map $| \cdot |_{\Tdp}$ is a subadditive gauge function, because
\begin{align*}
| \alpha \beta |_{\Tdp} &= d(o, \alpha \beta o ) \\
&\leq d(o, \alpha o ) + d(\alpha o, \alpha \beta o ) \\
&= d(o, \alpha o ) + d(o, \beta o ) \\
&= | \alpha |_{\Tdp} + | \beta |_{\Tdp}
\end{align*} 
for all automorphisms $\alpha$ and $\beta$ in $\Aff \Tdp$. The relation
\[ | \gamma |_{\Tdp} = d(o, \gamma o) = d(\gamma^{-1} o, o) = | \gamma^{-1} |_{\Tdp}  \]
is also satisfied for all $\gamma$ in $\Aff \Tdp$. Other properties of this gauge function are given in Lemma 4 of Cartwright, Kaimanovich and Woess \cite{cartwright94}.

\begin{lemma}
	Let $v$ be a vertex in $\Aff \Tdp$, Then
	\[ |v| = d(o,v) \geq | h(v) |. \]
\end{lemma}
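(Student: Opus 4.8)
The plan is to argue directly from the definition of the Busemann function, using nothing more than the triangle inequality in the tree. Recall that, for a vertex $v$ of $\Tdp$, we have $h(v) = d(v,c) - d(o,c)$, where $c = c(v)$ is the confluence vertex of the geodesic rays $\overline{v\omega}$ and $\overline{o\omega}$, i.e. the first vertex at which these two rays meet. The key point is simply that $|v| = d(o,v)$ dominates each of $d(v,c) - d(o,c)$ and $d(o,c) - d(v,c)$ after one application of the triangle inequality.

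More precisely, I would first record that $c$ is well defined: in a tree the rays $\overline{o\omega}$ and $\overline{v\omega}$ eventually coincide, so the set of vertices on both is a sub-ray pointing at $\omega$, and $c$ is its endpoint closest to $o$ along $\overline{o\omega}$. Then from $d(v,c) \le d(v,o) + d(o,c)$ we get $h(v) = d(v,c) - d(o,c) \le d(o,v)$, and from $d(o,c) \le d(o,v) + d(v,c)$ we get $-h(v) = d(o,c) - d(v,c) \le d(o,v)$. Combining the two inequalities yields $-d(o,v) \le h(v) \le d(o,v)$, that is, $|h(v)| \le d(o,v) = |v|$, which is the assertion.

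Since this is a two-line estimate, there is no real obstacle; the only point deserving care is the identification of $h(v)$ with $d(v,c) - d(o,c)$ and the well-definedness of $c$, both of which follow from the elementary tree facts already established above. If a sharper statement were wanted, one could additionally observe that $c$ lies on the geodesic segment $\overline{ov}$: the neighbour of $c$ toward $o$ along $\overline{o\omega}$ cannot lie on $\overline{v\omega}$, by the minimality of $c$, so concatenating $\overline{oc}$ with $\overline{cv}$ gives a non-backtracking path, whence $d(o,v) = d(o,c) + d(c,v)$ and $|h(v)| = |d(v,c) - d(o,c)| \le d(v,c) + d(o,c) = |v|$, with equality precisely when $o$ and $v$ lie on a common geodesic ray to $\omega$. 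For the lemma as stated, however, the bare triangle inequality suffices.
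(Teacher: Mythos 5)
Your proof is correct. It starts from the same place as the paper -- the formula $h(v) = d(v,c) - d(o,c)$ with $c$ the confluence of $\overline{v\omega}$ and $\overline{o\omega}$ -- but then diverges in a small, worthwhile way. The paper splits into cases ($v$ on $\overline{o\omega}$, $v$ adjacent to $o$, and the generic case) and in the generic case invokes the exact tree identity $d(o,v) = d(o,c) + d(v,c)$, which requires knowing that $c$ lies on the geodesic segment $\overline{ov}$. You instead apply the triangle inequality twice, once in each direction, to get $h(v) \le d(o,v)$ and $-h(v) \le d(o,v)$ simultaneously; this needs nothing about the position of $c$ beyond its existence, handles all cases uniformly, and would work verbatim in any metric space where $h$ is defined by such a reference point. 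What the paper's route buys in exchange is the exact decomposition, which (as you note in your closing remark) immediately identifies the equality case, namely $o$ and $v$ lying on a common ray toward $\omega$; your main argument gives only the inequality, but your optional observation that $c \in \overline{ov}$ recovers this. The one typographical caution is that the lemma statement says ``vertex in $\Aff\Tdp$'' where it means a vertex of $\Tdp$, and the paper's own proof freely interchanges $v$ and $x$; your write-up is cleaner on both counts.
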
	
\begin{proof}
	Let $c$ is the first common vertex on the geodesic rays $\overline{v \omega}$ and $\overline{o \omega}$, so that
	\[  h(v) = d(x,c) - d(o,c). \]
	If $v$ is in $\overline{o \omega}$, then $c = v$, so $h(x) = -d(o,x)$. If $v$ is a child of $o$, then $c = o$, so $h(x,o)$. Suppose that $v$ is neither in $\overline{o \omega}$ nor a child of $o$, then
	\[ d(o,x) = d(o, c) + d(x,c). \]
	If $d(o, c) > d(x,c)$, then $d(o,x) > d(o, c) - d(x,c) = |h(x)|$. Similarly, if $d(x, c) > d(o,c)$, then $d(o,x) > d(x, c) - d(o,c) = |h(x)|$.
\end{proof}

If $\mu$ is a probability measure on $\Aff \Tdp$ with finite first moment with respect to $|\cdot|_{\Tdp}$, then pushforward measure, $\pf{\phi}{\mu}$, has finite first moment with respect to the ordinary absolute value on the integers as 
\begin{align*}
\mm{\pf{\phi}{\mu}} &= \int_{\I} |z| \, d \pf{\phi}{\mu} (z) \\
&= \int_{\Aff \Tdp} |\phi(\gamma)| \, d \mu (\gamma) \\
&= \int_{\Aff \Tdp} |h(\gamma o)| \, d  \mu (\gamma)\\
&\leq \int_{\Aff \Tdp} | \gamma o| \, d  \mu (\gamma)\\
&= \int_{\Aff \Tdp} |\gamma|_{\Tdp} \, d  \mu (\gamma).
\end{align*}	

A sequence of vertices $\{ v_i \}_{i=1}^\infty$ in $\Tdp$ is \emph{regular} if there is an end $\xi$ in $\partial \Tdp$ and a non-negative real number $a$, called the \emph{rate of escape}, such that
\[ a = \lim_{n \rightarrow \infty} \frac{1}{n} d (v_n, \xi_{\flr{a n}}) \]
where $v_n$ is the $n$th vertex on the geodesic $\overline{o \xi}$ and $\flr{\cdot}$ is the floor function. Notice that if $a$ is non-zero, then $v_n$ converges to $\xi$, so $\xi$ is unique.

\begin{lemma}[Cartwright, Kaimanovich and Woess \cite{cartwright94}, Lemma 3]
	\label{lem:regularity}
	A sequence of vertices $\{ v_i \}_{i=1}^\infty$ in $\Tdp$ is regular with rate of escape $a$ if and only if 
	\begin{enumerate}[(i)]
		\item $\lim_{n \rightarrow \infty} \frac{1}{n} d (v_n, v_{n+1}), $ and
		\item $\lim_{n \rightarrow \infty} \frac{1}{n} |v_n| = a$.  
	\end{enumerate}
\end{lemma}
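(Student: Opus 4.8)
The plan is to prove the two directions of the equivalence separately. The implication ``regular $\Rightarrow$ (i), (ii)'' is a short triangle-inequality estimate, whereas the converse needs first to manufacture the end $\xi$ out of the two hypotheses and then to control the linear rate at which $v_n$ tracks the geodesic ray $\overline{o\xi}$.

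For the forward direction I would argue as follows. Write $\xi_j$ for the $j$th vertex of $\overline{o\xi}$, so that $|\xi_j| = j$; regularity means $\tfrac1n d(v_n,\xi_{\lfloor an\rfloor}) \to 0$. The triangle inequality gives $\bigl||v_n| - \lfloor an\rfloor\bigr| = |d(o,v_n) - d(o,\xi_{\lfloor an\rfloor})| \le d(v_n,\xi_{\lfloor an\rfloor})$, and dividing by $n$ (using $\lfloor an\rfloor/n \to a$) yields (ii). Inserting the points $\xi_{\lfloor an\rfloor}$ and $\xi_{\lfloor a(n+1)\rfloor}$ between $v_n$ and $v_{n+1}$ gives $d(v_n,v_{n+1}) \le d(v_n,\xi_{\lfloor an\rfloor}) + |\lfloor a(n+1)\rfloor - \lfloor an\rfloor| + d(\xi_{\lfloor a(n+1)\rfloor},v_{n+1})$, whose middle term is at most $a+1$, so dividing by $n$ yields (i). This part is routine.

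For the converse I would first dispose of the case $a=0$ immediately: any end $\xi$ works, since then $\xi_{\lfloor an\rfloor}=o$ for all $n$ and (ii) is precisely $\tfrac1n d(v_n,o)\to 0$. So assume $a>0$. The central device is the confluent $c_n := v_n \curlywedge v_{n+1}$, the last common vertex of $\overline{o v_n}$ and $\overline{o v_{n+1}}$: since $d(v_n,v_{n+1}) = |v_n| + |v_{n+1}| - 2|c_n|$, conditions (i) and (ii) give $\tfrac1n|c_n|\to a$, and in particular $|c_n|\to\infty$. Because two geodesic segments issuing from $o$ agree exactly up to their confluent, $\overline{o v_n}$ and $\overline{o v_k}$ share their first $\inf_{n\le j<k}|c_j|$ vertices for every $k>n$; since $|c_j|\to\infty$ these common prefixes lengthen without bound and stabilise to a ray $o=\xi_0,\xi_1,\dots$, hence to an end $\xi\in\partial\Tdp$. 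I would then set $m_n := \inf_{k\ge n}|c_k|$ (finite, since $|c_k|\to\infty$), note that $\xi_{m_n}$ lies on $\overline{o v_n}$ and that $m_n \le |c_n|\le |v_n|$ (because $c_n\in\overline{o v_n}$), so $d(v_n,\xi_{m_n}) = |v_n|-m_n$, and prove $m_n/n\to a$: the bound $m_n\le|c_n|$ gives $\limsup m_n/n\le a$, while for any $\varepsilon\in(0,a)$ the estimate $|c_k|\ge(a-\varepsilon)k$ for large $k$ forces $m_n = \inf_{k\ge n}|c_k|\ge (a-\varepsilon)n$ for large $n$, giving $\liminf m_n/n\ge a$. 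Finally, $d(v_n,\xi_{\lfloor an\rfloor})\le d(v_n,\xi_{m_n}) + d(\xi_{m_n},\xi_{\lfloor an\rfloor}) = (|v_n|-m_n) + |m_n - \lfloor an\rfloor|$, and dividing by $n$ the right side tends to $(a-a)+|a-a|=0$, so $\{v_i\}$ is regular with rate $a$.

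I expect the main obstacle to lie entirely in the converse, and within it in two points: (a) extracting the end $\xi$, for which $|c_n|\to\infty$ is exactly what is needed --- one could instead observe that $|c_n|\to\infty$ makes $\{v_n\}$ Cauchy for the ultrametric $\theta$ and invoke compactness of $\Tdp\cup\partial\Tdp$ from the earlier lemma, but the quantitative tracking estimate is still required --- and (b) showing that the tracking depth $m_n=\inf_{k\ge n}|c_k|$ grows linearly with the \emph{correct} slope $a$ rather than merely tending to infinity; it is here that (i) and (ii) must be used in tandem, (i) forcing $2|c_n|$ to be asymptotic to $|v_n|+|v_{n+1}|$ and (ii) then pinning the slope. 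A minor point to keep in view is ensuring $m_n\le|v_n|$, so that $\xi_{m_n}$ genuinely sits on $\overline{o v_n}$; this is immediate from $c_n\in\overline{o v_n}$.
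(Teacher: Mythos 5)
Your proof is correct: the forward direction via the reverse triangle inequality, and the converse via the confluents $c_n = v_n \curlywedge v_{n+1}$ together with the tree identity $d(v_n,v_{n+1}) = |v_n| + |v_{n+1}| - 2|c_n|$ and the tracking depth $m_n = \inf_{k \ge n}|c_k|$, is exactly the standard argument, and you rightly read the paper's garbled condition (i) as ``$\lim_n \frac{1}{n} d(v_n,v_{n+1}) = 0$'' and its definition of regularity as ``$\frac{1}{n} d(v_n,\xi_{\lfloor an \rfloor}) \to 0$''. The paper itself imports this lemma from Cartwright--Kaimanovich--Woess without reproducing a proof, so there is nothing in-paper to compare against; your argument coincides with the one in the cited source.
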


\begin{proposition}[Cartwright, Kaimanovich and Woess \cite{cartwright94}, Proposition 1]
	\label{prop:regularity}
	A sequence of vertices $v = \{ v_i \}_{i=1}^\infty$ in $\Tdp$ is regular if and only if 
	\begin{enumerate}[(i)]
		\item $\lim_{n \rightarrow \infty} \frac{1}{n} d (v_n, v_{n+1}), $ and
		\item the limit $a_h = \lim_{n \rightarrow \infty} \frac{1}{n} h(v_n)$ exists.  
	\end{enumerate}
	In this case the rate of escape of $v$ is $|a_h|$.
\end{proposition}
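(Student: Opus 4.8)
The plan is to derive the statement from Lemma~\ref{lem:regularity}: once one knows that, for sequences satisfying (i), the limit $\tfrac1n|v_n|$ exists if and only if the limit $\tfrac1n h(v_n)$ exists and that in that case $\lim_n\tfrac1n|v_n| = |\lim_n\tfrac1n h(v_n)|$, the proposition is immediate. To handle the two coordinates $|v_n|$ and $h(v_n)$ simultaneously I would, for each $n$, let $b_n$ be the vertex at which the geodesic $\overline{o v_n}$ leaves the ray $\overline{o\omega}$ (so $b_n = v_n \curlywedge \omega$) and put $\beta_n = d(o,b_n)$ and $D_n = d(b_n,v_n)$. Since $\overline{ov_n}$ first runs down $\overline{o\omega}$ from $o$ to $b_n$ and then climbs away from $\omega$ to $v_n$, one gets $|v_n| = \beta_n + D_n$ and $h(v_n) = D_n - \beta_n$, i.e. $\beta_n = \tfrac12(|v_n|-h(v_n))$ and $D_n = \tfrac12(|v_n|+h(v_n))$, both non-negative; in particular $\beta_n \ge \max(0,-h(v_n))$, which already reproves the inequality $|v|\ge|h(v)|$ stated above. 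Thus $\tfrac1n|v_n|$ and $\tfrac1n h(v_n)$ both converge exactly when $\tfrac1n\beta_n$ converges, and the whole proof becomes a study of the piecewise-constant sequence $(\beta_n)$.

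The geometric input I would use is a dichotomy for consecutive terms. If $\beta_n \neq \beta_{n+1}$ then $b_n \neq b_{n+1}$; since $b_n,b_{n+1}$ both lie on $\overline{o\omega}$, the standard tree identities for $\curlywedge$ force $\overline{v_n v_{n+1}}$ to pass through both of them, so $d(v_n,v_{n+1}) = D_n + d(b_n,b_{n+1}) + D_{n+1} \ge D_n + D_{n+1}$. Hence for every $n$ either $\beta_{n+1} = \beta_n$, or hypothesis (i) makes $D_{n+1}\le d(v_n,v_{n+1})$ small relative to $n$. For the direction in which (i) and (ii) imply regularity, set $a_h = \lim_n\tfrac1n h(v_n)$ and $L = \max(0,-a_h)$; given $\epsilon>0$, choose $N$ with $d(v_j,v_{j+1})\le \epsilon j$ and $|h(v_j)|\le(|a_h|+\epsilon)j$ for $j\ge N$, and prove by induction on $n\ge N$ that $\beta_n \le \max(\beta_N,(L+2\epsilon)n)$. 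On a step with $\beta_{n+1}=\beta_n$ this is automatic; on a step with $\beta_{n+1}\neq\beta_n$ one has $D_{n+1}\le\epsilon n$, so $\beta_{n+1} = D_{n+1}-h(v_{n+1}) \le \epsilon n + |h(v_{n+1})|$, which is at most $(L+2\epsilon)(n+1)$ when $a_h\le0$, and (choosing $\epsilon<a_h/2$) forces $\beta_{n+1}<0$ when $a_h>0$, so that no such step can occur and $\beta$ is eventually constant. Letting $n\to\infty$, then $\epsilon\to0$, gives $\limsup_n\tfrac1n\beta_n\le L$; combined with $\beta_n\ge\max(0,-h(v_n))$, which gives $\liminf_n\tfrac1n\beta_n\ge L$, we get $\tfrac1n\beta_n\to L$, hence $\tfrac1n D_n\to\max(a_h,0)$ and $\tfrac1n|v_n|\to|a_h|$. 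By Lemma~\ref{lem:regularity}, $v$ is then regular with rate $|a_h|$.

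For the converse, assume $v$ regular; Lemma~\ref{lem:regularity} gives (i) and $\tfrac1n|v_n|\to a$ with $a$ the rate of escape. If $a=0$ then $\beta_n+D_n = |v_n| = o(n)$ forces $\tfrac1n\beta_n\to0$ and $\tfrac1n h(v_n)\to0$. If $a>0$, let $M$ be the set of indices where $\beta$ changes value. At each $n\in M$ the dichotomy together with (i) gives $D_n, D_{n+1} = o(n)$, hence $\beta_n = |v_n|-D_n = an+o(n)$ and likewise $\beta_{n+1} = an+o(n)$, while $\beta$ is constant between consecutive elements of $M$. If $M$ is finite, $\beta_n$ is eventually constant, so $\tfrac1n\beta_n\to0$ and $\tfrac1n h(v_n)\to a$; if $M=\{m_1<m_2<\cdots\}$ is infinite, then $\beta$ has the constant value $\approx a m_{i-1}$ on $(m_{i-1},m_i]$, so $D_{m_i} = |v_{m_i}|-\beta_{m_i}\approx a(m_i-m_{i-1})$, and $d(v_{m_i},v_{m_i+1})\ge D_{m_i}$ together with (i) forces $m_{i-1}/m_i\to1$; since $\beta_n\approx am_i$ for $n\in(m_i,m_{i+1}]$, this yields $\tfrac1n\beta_n\to a$, hence $\tfrac1n h(v_n)\to -a$. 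In every case $\tfrac1n h(v_n)$ converges to $\pm a$, so (ii) holds and the rate of escape equals $|a_h|$.

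The step I expect to be the main obstacle is the transfer of convergence between the two coordinates under hypothesis (i) alone: (i) does not control $\tfrac1n|v_n|$ or $\tfrac1n h(v_n)$ on its own (partial sums of the $d(v_j,v_{j+1})$ can grow superlinearly), so the argument must genuinely use the tree structure rather than $1$-Lipschitz estimates — this is exactly what the branch-point dichotomy supplies. The delicate points will be making that dichotomy rigorous, including the degenerate positions in which $v_n$ or $v_{n+1}$ lies on $\overline{o\omega}$, and running the inductive bound on $\beta_n$ with the sign regimes $a_h<0$, $a_h=0$, $a_h>0$ handled uniformly; keeping the single estimate $\beta_n\le\max(\beta_N,(L+2\epsilon)n)$ valid across those regimes is where most of the bookkeeping lies.
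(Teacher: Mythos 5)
Your argument is correct, and it is worth saying up front that the paper itself gives no proof of this proposition at all --- it is quoted verbatim from Cartwright--Kaimanovich--Woess (their Proposition~1) --- so there is no in-paper proof to compare against; what you have written is a self-contained substitute. Your route is the natural one: reduce to Lemma~\ref{lem:regularity} by showing that, under hypothesis (i), convergence of $\tfrac1n h(v_n)$ and of $\tfrac1n|v_n|$ are equivalent with $\lim\tfrac1n|v_n| = |\lim\tfrac1n h(v_n)|$, via the decomposition $|v_n|=\beta_n+D_n$, $h(v_n)=D_n-\beta_n$ at the branch point $b_n=v_n\curlywedge\omega$, together with the key geometric fact that $\beta_{n+1}\neq\beta_n$ forces $d(v_n,v_{n+1})\geq D_n+D_{n+1}+|\beta_n-\beta_{n+1}|$ (your dichotomy is valid, including the degenerate cases $D_n=0$ or $\beta_n=0$, since the two ``hanging'' segments live in disjoint subtrees off the ray $\overline{o\omega}$). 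Both directions check out: the induction $\beta_n\leq\max(\beta_N,(L+2\epsilon)n)$ closes correctly, the squeeze $\liminf\tfrac1n\beta_n\geq\max(0,-a_h)$ from $\beta_n\geq\max(0,-h(v_n))$ is right, and the converse's case analysis on the change-set $M$ (finite versus infinite, with $m_{i-1}/m_i\to1$ forced by (i)) is sound. Two trivial repairs: in the subcase $a_h>0$ of the forward direction you need the one-sided bound $h(v_{n+1})\geq(a_h-\epsilon)(n+1)$ rather than the stated $|h(v_j)|\leq(|a_h|+\epsilon)j$ (both follow from (ii), so just adjust the choice of $N$); and the summary sentence ``both converge exactly when $\tfrac1n\beta_n$ converges'' overstates what is literally true without (i), though your detailed argument never relies on that loose formulation.
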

Following Cartwright, Kaimanovich and Woess \cite{cartwright94}, we call a closed subgroup of $\Aff \Tdp$ with distinguished end $\omega$ \emph{exceptional} if it is contained within $\Hor \Tdp$ or there is an end, apart from $\omega$, which is fixed by every element. 
\begin{proposition}
	\label{prop:exceptionalimpliesuniscalar}
	Let $\Gamma$ be an exceptional closed subgroup in $\Aff \Tdp$ with distinguished end $\omega$. Then, $\Gamma$ is uniscalar.
\end{proposition}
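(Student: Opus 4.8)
Since $\Gamma$ is closed in the t.d.l.c. group $\Aff \Tdp$, it is itself a t.d.l.c. group, and the assertion that $\Gamma$ is uniscalar refers to the scale function of $\Gamma$. The plan is to show that every $g \in \Gamma$ normalizes some compact open subgroup of $\Gamma$: if $U$ is compact open in $\Gamma$ and $g U g^{-1} = U$, then $[\,g U g^{-1} : U \cap g U g^{-1}\,] = 1$, so $s(g) = 1$; since this holds for every $g \in \Gamma$, the group $\Gamma$ is uniscalar. I would split into the two ways $\Gamma$ can be exceptional. (One really does need the scale of $\Gamma$, not of the ambient group: the cyclic group generated by a hyperbolic element $g \in \Aff\Tdp$ with $\phi(g) \neq 0$ is exceptional, yet $g$ has scale larger than $1$ in $\Aff\Tdp$.)

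First suppose $\Gamma \subseteq \Hor \Tdp$. Write $o = \ell_0, \ell_1, \ell_2, \dots$ for the successive vertices of the ray $\overline{o \omega}$, so that $\ell_n \to \omega$, and set $W_n = \operatorname{stab}_{\Aut \Tdp}(\ell_n) \cap \Hor \Tdp$. Each $W_n$ is open in $\Hor\Tdp$ and is compact, being closed in the profinite group $\operatorname{stab}_{\Aut \Tdp}(\ell_n)$, and the chain is increasing: an automorphism that fixes $\ell_n$ and $\omega$ carries the ray $\overline{\ell_n \omega}$ onto itself, hence fixes it pointwise, and in particular fixes $\ell_{n+1}$. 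The crux of this case is the identity $\Hor\Tdp = \bigcup_{n \ge 0} W_n$. Given $\alpha \in \Hor\Tdp$, the common ancestor $c$ of $o$ and $\alpha o$ (with respect to $\omega$) lies on $\overline{o \omega}$ and is fixed by $\alpha$, by the fact recorded above that a horocyclic automorphism fixes the common ancestor of any vertex and its image; hence $c = \ell_m$ for some $m$ and $\alpha \in W_m$. Intersecting with $\Gamma$ gives $\Gamma = \bigcup_n (\Gamma \cap W_n)$, an increasing union of compact open subgroups of $\Gamma$, so every $g \in \Gamma$ lies in — and therefore normalizes — some $\Gamma \cap W_n$, whence $s(g) = 1$.

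Now suppose $\Gamma \not\subseteq \Hor\Tdp$. Then, since $\Gamma$ is exceptional, there is an end $\xi \neq \omega$ fixed by every element of $\Gamma$; let $L$ be the doubly infinite geodesic with endpoints $\omega$ and $\xi$. Every $g \in \Gamma$ fixes both ends of $L$, so it stabilizes $L$ and acts on it by an isometry, giving a homomorphism $\rho \colon \Gamma \to \operatorname{Isom}(L)$. An isometry of the line $L$ is determined by its values at two adjacent vertices of $L$, so $\rho$ is locally constant, hence continuous, while $\operatorname{Isom}(L)$ is discrete; therefore $K := \ker \rho$, the pointwise stabilizer of $L$ in $\Gamma$, is open in $\Gamma$. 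It is also closed, and compact because it is contained in the stabilizer of a vertex of $L$, and it is normal in $\Gamma$. Hence every $g \in \Gamma$ normalizes the compact open subgroup $K$, and again $s(g) = 1$.

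In either case $s$ is identically $1$ on $\Gamma$, which is the assertion. I expect the only delicate step to be the exhaustion $\Hor\Tdp = \bigcup_n W_n$ in the first case — in essence the statement that every horocyclic automorphism is elliptic — whereas the second case reduces to the observation that the action of $\Gamma$ on the fixed line has compact open kernel.
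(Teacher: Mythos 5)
Your proof is correct and follows essentially the same route as the paper: in the horocyclic case both arguments exploit that $\gamma$ fixes $o \wedge \gamma o$ and use the stabilizer of that vertex in $\Gamma$, and in the fixed-end case your kernel $K$ of the action on $\overline{\xi\omega}$ coincides with the paper's $V = \operatorname{stab}_\Gamma(v)$ for $v$ on that line. Your extra care in spelling out the exhaustion $\Hor\Tdp = \bigcup_n W_n$ and in noting that the scale must be computed intrinsically in $\Gamma$ (not in $\Aff\Tdp$) is a welcome clarification but does not change the substance of the argument.
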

\begin{proof}
	Suppose that $\Gamma$ is contained within $\Hor \Tdp$. Let $\gamma$ be in $\Gamma$. Let $\alpha$ be conjugation by $\gamma$ Then, $\alpha$ fixes the element $o \wedge \gamma o$.  Let
	\[ U = \operatorname{stab}_{\Gamma}(o \wedge \gamma o). \]
	Then, $U$ is compact and open and $\alpha(U) = U$, so
	\[ s(\gamma) = [ \alpha(U) : U \cap \alpha(U) ] = 1.  \]

	Suppose instead $\Gamma$ fixes an element $\xi$ in $\partial^* \Tdp$.  Then $\gamma$ acts as a translation along the doubly infinite path $\overline{\xi \omega}$.  Let $v$ be a vertex in $\overline{\xi \omega}$ and let
	\[ V = \operatorname{stab}_{\Gamma}(v). \]
	Then $V$ is compact and open, and since $\overline{\xi \omega}$ is fixed by $\gamma$,
	\[ \alpha(V) = \operatorname{stab}_{\Gamma}(\gamma \cdot v) = \operatorname{stab}_{\Gamma}(v) = V. \] Hence,  
	\[ s(\alpha) = [ \alpha(V) : V \cap \alpha(V) ] = 1  \]
	in this case too. It follows that $\Gamma$ is uniscalar.
\end{proof}
\begin{theorem}
	\label{thm:uniexception}
	Suppose that $\Gamma$ is an closed subgroup in $\Aff \Tdp$. Then $\Gamma$ is unimodular if and only if it is exceptional.
\end{theorem}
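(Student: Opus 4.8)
The reverse implication needs nothing new: if $\Gamma$ is exceptional, then Proposition~\ref{prop:exceptionalimpliesuniscalar} shows $\Gamma$ is uniscalar, and uniscalar groups are unimodular. So the real content is that a unimodular closed subgroup $\Gamma$ of $\Aff\Tdp$ must be exceptional. If $\phi(\Gamma)=\{0\}$ then $\Gamma\subseteq\Hor\Tdp$ and we are done; otherwise $\phi(\Gamma)=m_0\I$ for a unique $m_0\geq 1$, and I would fix $\gamma_0\in\Gamma$ with $\phi(\gamma_0)=m_0$. An element of $\Aff\Tdp$ either fixes a vertex or is hyperbolic (an edge inversion cannot fix $\omega$, since then $\phi$ would have to take the incompatible values $+1$ and $-1$ on the two endpoints of the inverted edge); as $\phi(\gamma_0)\neq 0$, this $\gamma_0$ cannot fix a vertex, so it is hyperbolic, and since it fixes $\omega$ the doubly infinite geodesic $A$ along which it translates has $\omega$ as one of its two endpoints. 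Write $\xi_0$ for the other endpoint and $\{v_\ell\}_{\ell\in\I}$ for the vertices of $A$ indexed so that $h(v_\ell)=\ell$, so that $\gamma_0 v_\ell=v_{\ell+m_0}$. The goal is to prove that every element of $\Gamma$ fixes $\xi_0$; then $\Gamma$ is exceptional.

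The crux --- and the step I expect to be the main obstacle --- is to show that $\operatorname{stab}_\Gamma(v_\ell)$ is independent of $\ell$ and equals the pointwise stabiliser $\Gamma_A$ of the axis. The mechanism I would use is as follows. Put $U=\operatorname{stab}_\Gamma(v_\ell)$, a compact open subgroup. Any automorphism fixing $v_\ell$ and the end $\omega$ fixes the whole ray $\overline{v_\ell\omega}$ pointwise, so $U$ fixes $v_{\ell-m_0}$; equivalently $\gamma_0^{-1}U\gamma_0=\operatorname{stab}_\Gamma(v_{\ell-m_0})\supseteq U$, i.e.\ $\gamma_0 U\gamma_0^{-1}\subseteq U$. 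In a unimodular group conjugation preserves Haar measure, so $\lambda_\Gamma(\gamma_0 U\gamma_0^{-1})=\lambda_\Gamma(U)$, and since $\gamma_0 U\gamma_0^{-1}$ is then a finite-index subgroup of the compact group $U$ of equal measure, it must equal $U$. As $\gamma_0 U\gamma_0^{-1}=\operatorname{stab}_\Gamma(v_{\ell+m_0})$, this gives $\operatorname{stab}_\Gamma(v_{\ell+m_0})=\operatorname{stab}_\Gamma(v_\ell)$ for every $\ell$. Hence $\operatorname{stab}_\Gamma(v_\ell)$ fixes $v_{\ell+k m_0}$ for all $k\geq 0$, and it also fixes the ray $\overline{v_\ell\omega}$; the union of these fixed rays is all of $A$, so $\operatorname{stab}_\Gamma(v_\ell)$ fixes $A$ pointwise, giving $\operatorname{stab}_\Gamma(v_\ell)=\Gamma_A$ for every $\ell$. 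Write $K_0$ for this common subgroup.

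It remains to cash in this rigidity. Let $N=\Gamma\cap\Hor\Tdp$. Any $n\in N$ has $\phi(n)=0$, so it is not hyperbolic; since it fixes $\omega$ it fixes a vertex $w$, hence the ray $\overline{w\omega}$. This ray is asymptotic to $A$, so it contains $\overline{v_\ell\omega}$ for $\ell$ sufficiently small; thus $n$ fixes $v_\ell$, i.e.\ $n\in\operatorname{stab}_\Gamma(v_\ell)=K_0$. Therefore $N\subseteq K_0=\Gamma_A$, so every element of $N$ fixes $\xi_0$. Since $\gamma_0$ fixes $\xi_0$ as well and $\phi(\gamma_0)=m_0$ generates $\phi(\Gamma)$, every $g\in\Gamma$ differs from a power of $\gamma_0$ by an element of $\ker(\phi\vert_\Gamma)=N$, so $\Gamma=\langle N,\gamma_0\rangle$ fixes $\xi_0\neq\omega$; hence $\Gamma$ is exceptional. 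Beyond this the only work is routine --- the dichotomy for elements of $\Aff\Tdp$, uniqueness of geodesic rays representing a given end, and bookkeeping with the Busemann levels that pins down the direction in which $\gamma_0$ translates along $A$ --- so I expect no further difficulty.
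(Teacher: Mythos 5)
Your proof is correct, and the hard direction takes a genuinely different route from the paper's. The easy direction (exceptional $\Rightarrow$ uniscalar $\Rightarrow$ unimodular via Proposition \ref{prop:exceptionalimpliesuniscalar}) is identical. For the converse the paper argues the contrapositive: assuming $\Gamma$ is non-exceptional it picks a translation $\gamma$ with $\phi(\gamma)=r$ and axis ending at $\xi \in \partial^* \Tdp$, uses non-exceptionality to find $\beta$ with $\phi(\beta)=0$ and $\beta\xi\neq\xi$, and then reads off $\Delta\neq 1$ from Woess's orbit-counting formula for the modular function, comparing the orbits of the two stabilizers $\operatorname{stab}_\Gamma(x)$ and $\operatorname{stab}_\Gamma(\gamma x)$ at the branch vertex $x=\xi\wedge\beta\xi$. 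You instead assume unimodularity and extract structure from it: since conjugation preserves Haar measure and $\gamma_0 U\gamma_0^{-1}\subseteq U$ is an inclusion of compact open subgroups of equal finite positive measure, the index formula $\lambda_\Gamma(U)=[U:\gamma_0U\gamma_0^{-1}]\,\lambda_\Gamma(\gamma_0U\gamma_0^{-1})$ forces equality, so all vertex stabilizers along the axis coincide with the pointwise axis stabilizer $\Gamma_A$; every element of $\ker(\phi\vert_\Gamma)$ is elliptic, hence fixes a vertex far down the axis, hence lies in $\Gamma_A$ and fixes $\xi_0$, and $\Gamma=\langle \ker(\phi\vert_\Gamma),\gamma_0\rangle$ then fixes $\xi_0\neq\omega$. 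Both arguments exploit the same underlying phenomenon --- comparing the measures of stabilizers at $x$ and $\gamma x$ --- but yours is self-contained (no appeal to Woess's Lemma 5) and yields the direct implication, at the cost of the extra rigidity step showing the stabilizers along the axis are all equal; the paper's version is shorter once the witness $\beta$ is produced. All the supporting claims you flag as routine (the elliptic/hyperbolic dichotomy in $\Aff\Tdp$, that an element fixing a vertex and $\omega$ fixes the ray to $\omega$ pointwise, that rays to a common end eventually coincide) are indeed standard and correctly used.
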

\begin{proof}
	By Proposition \ref{prop:exceptionalimpliesuniscalar}, if $\Gamma$ is exceptional, then it is uniscalar, hence unimodular. The remainder of the argument is the same as the one in Cartwright, Kaimanovich and Woess \cite{cartwright94}. Suppose that $\Gamma$ is non-exceptional. Then $\Gamma \setminus \Hor \Tdp$ is non-empty. It follows that the horocyclic map $\phi$ is non-trivial and that $\phi(\Gamma) = r \I$, where 
	\[ r = \min \left \{ \phi(\gamma_j) > 0 : \{ \gamma \}_{i=1}^k \in \Gamma \right \}. \]
	Let $\gamma$ be an element of $\Gamma$, such that $\phi(\gamma) = r$. It acts by translation on a doubly infinite path $\overline{\xi \omega}$, where $\xi$ is the unique fixed point of $\gamma$ in $\partial^* \Tdp$. Since $\Gamma$ is non-exceptional, $\Gamma$ does not fix $\xi$, hence, there is an element $\alpha$ in $\Gamma$, such that $\alpha \xi \neq \xi$.
	
	Now, $\phi(\alpha) = l r$ for some integer $l$. Let $\beta = \gamma^{-1} \alpha$, and let $\zeta = \beta \xi$. Then, $\phi(\beta) = 0$, and $\zeta$ is in $\partial^* \Tdp \setminus \{ \xi \}$. Let $\xi \wedge \zeta$ be the least common ancestor of $\xi$ and $\zeta$ in $\Td{k}$ . Then, $\gamma \cdot \xi$ and $(\beta \circ \gamma) \cdot x$ are distinct elements, and $\beta$ fixes $x$. Lemma 5 in Woess \cite{woess1991topological} gives that
	\[ \Delta(x) = \frac{|\operatorname{stab}_{\Gamma}(\gamma x)  \cdot  x|}{|\operatorname{stab}_{\Gamma}(x) \cdot (\gamma x) |}. \]
	Therefore,  $\Gamma$ is not unimodular, because $\operatorname{stab}_{\Gamma}(x) \cdot (\gamma x) $ contains $\gamma x$ and $\beta \gamma x$ and $\operatorname{stab}_{\Gamma}(\gamma x)  \cdot  x$ contains only ${x}$, since $\omega_k$ is fixed by $\Gamma$.  	
\end{proof}
\begin{corollary}[Cartwright, Kaimanovich and Woess \cite{cartwright94}]
	\label{cor:nonexceptimpliestransient}
	Suppose that $\mu$ is a probability measure on $\Aff \Tdp$. Suppose that the closed group generated by the support of $\mu$ is non-exceptional. Then $(\Aff \Tdp, \mu)$ is a transient random walk.
\end{corollary}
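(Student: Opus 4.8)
My plan is to deduce this from Theorem~\ref{thm:uniexception} together with the classical fact that a random walk whose step distribution has support generating a non-unimodular closed subgroup is transient --- equivalently, that a second countable locally compact group carrying a recurrent random walk whose support generates it must be unimodular (see \cite{cartwright94} and the references therein). Since $\mu$ is supported on the closed subgroup $\Gamma = \overline{\langle \supp \mu \rangle}$ of $\Aff \Tdp$, the random walk $(\Aff \Tdp, \mu)$ is transient if and only if $(\Gamma, \mu)$ is, and $\Gamma$ is non-exceptional by hypothesis, so Theorem~\ref{thm:uniexception} shows it is not unimodular; applying the quoted fact to $\Gamma$ finishes the proof. (The modular function of $\Aff \Tdp$ itself is $\gamma \mapsto q^{\phi(\gamma)}$ with $q = d-1$, up to a choice of sign, but Theorem~\ref{thm:uniexception} is exactly what is needed here, because this does not in general restrict to the modular function of the subgroup $\Gamma$.)

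To exhibit the mechanism and to isolate the one delicate point, I would also argue directly on the tree. The walk is transient precisely when the orbit $R_n o$ leaves every finite subset of $\Tdp$ almost surely: the orbit map $g \mapsto g o$ is continuous and $\Tdp$ is discrete, so compact subsets of $\Aff \Tdp$ have finite image in $\Tdp$, while vertex stabilisers are compact, so finite subsets of $\Tdp$ pull back to compact subsets of $\Aff \Tdp$; hence the walk returns infinitely often to a compact set if and only if its orbit returns infinitely often to a finite set. (Passing to the left random walk, which has the same return probabilities, turns $R_n o$ into a Markov chain on the vertex set, if that is convenient.) Since $h(\gamma o) = \phi(\gamma)$ with $\phi$ a homomorphism, $h(R_n o) = \phi(R_n)$ is a random walk on $\I$ with increment law $\pf{\phi}{\mu}$, and non-exceptionality of $\Gamma$ forces $\phi|_\Gamma \not\equiv 0$, so $\phi(\Gamma) = r\I$ with $r \ge 1$, exactly as in the proof of Theorem~\ref{thm:uniexception}. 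If this projected walk is transient on $\I$ --- for instance whenever the increment mean $\int_{\Aff \Tdp} \phi \, d\mu$ is non-zero, or is $\pm\infty$ with a definite sign --- then $|h(R_n o)| \to \infty$ almost surely, whence $d(o, R_n o) \ge |h(R_n o)| \to \infty$ by the inequality $|v| \ge |h(v)|$ recorded above; the orbit leaves every finite set, and the walk is transient.

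The main obstacle is the complementary case, in which $h(R_n o)$ is a recurrent random walk on $\I$ and in particular returns to $0$ infinitely often: transience can no longer be detected from the horocyclic projection and must instead come from the escape of $R_n o$ to infinity \emph{within} the horospheres $H_m$. This transverse escape is the genuine content of the non-unimodularity of $\Gamma$ --- it is the Baumslag--Solitar-type expansion of the action showing itself --- and controlling it is the crux of the argument. One route is to track the branch vertex $c_n$ at which the ray $\overline{R_n o\, \omega}$ meets $\overline{o \omega}$ and to prove $d(o, c_n) \to \infty$ almost surely, using that $c_n$ can retreat towards $o$ by only a bounded, identically distributed amount at each step while the non-triviality of the action drives it steadily outward; the alternative is simply to invoke the fact, quoted above, that recurrence forces unimodularity, whose own proof turns on the essential uniqueness of the invariant $\sigma$-finite measure of a recurrent Markov operator. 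Either way, this zero-drift regime is where the work lies, and the non-exceptional hypothesis enters precisely to rule out, via Theorem~\ref{thm:uniexception}, the unimodular situation in which recurrence could occur.
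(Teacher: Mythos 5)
Your first paragraph is exactly the paper's proof: pass to the closed subgroup $\Gamma$ generated by $\supp \mu$, use Theorem~\ref{thm:uniexception} to conclude that non-exceptional implies non-unimodular, and invoke the classical fact (Guivarc'h, Keane and Roynette) that non-unimodularity of the group generated by the support forces transience. The additional direct-on-the-tree discussion is a correct diagnosis of where the difficulty would lie without that citation, but it is not needed and the paper does not attempt it.
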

\begin{proof}
	Since the closed group generated by the support of $\mu$ is non-exceptional it is not unimodular. Non-unimodularity implies transience of the random walk. See Guivarc'h, Keane and Roynette \cite{guivarc77}. 
\end{proof}

\begin{theorem}[Cartwright, Kaimanovich and Woess \cite{cartwright94}, Theorem 4]
	\label{thm:roeafft}
	Suppose that $\mu$ is a probability measure on $\Aff \Tdp$. Let $R_n$ be the right random walk associated with $(\Aff \Tdp, \mu)$.  Suppose $\mu$ has finite first moment with respect to $|\cdot|_{\Tdp}$. Then, 
	\[ \lim_{n \rightarrow \infty} d \left ( R_n o, R_{n+1}o \right ) = 0 \]
	almost surely, and
	\[ \lim_{n \rightarrow \infty} \frac{1}{n} | R_n |_\Tdp = \left | \mm{\pf{\phi}{\mu}} \right |  \]
	almost surely and in $L_1$.
\end{theorem}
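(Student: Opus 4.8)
The plan is to deduce the whole statement from the single claim that the orbit sequence $\{R_n o\}_{n \geq 1}$ in $\Tdp$ is, $\pathmeasure$-almost surely, a \emph{regular} sequence with rate of escape $|\mm{\pf{\phi}{\mu}}|$. By Proposition \ref{prop:regularity} this reduces to verifying two conditions almost surely: (i) $\tfrac1n\, d(R_n o, R_{n+1} o) \to 0$, and (ii) the limit $a_h = \lim_n \tfrac1n\, h(R_n o)$ exists. Granting these, Proposition \ref{prop:regularity} yields simultaneously condition (i) and the assertion $\tfrac1n |R_n|_\Tdp = \tfrac1n |R_n o| \to |a_h|$, after which it only remains to identify $a_h$ and to promote the second convergence to $L^1$.

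For (i), write $R_{n+1} = R_n \omega_{n+1}$ in terms of the i.i.d.\ $\mu$-distributed increments $\omega_1, \omega_2, \dots$\ of the walk. Since every $R_n$ acts on $\Tdp$ by isometries,
\[ d(R_n o, R_{n+1} o) = d\bigl(R_n o,\, R_n(\omega_{n+1} o)\bigr) = d(o, \omega_{n+1} o) = |\omega_{n+1}|_\Tdp, \]
so the quantities in (i) are just the i.i.d.\ increment sizes, which have finite first moment by hypothesis. Then $\sum_{n} \pathmeasure\bigl(|\omega_{n+1}|_\Tdp > \varepsilon n\bigr) = \sum_{n}\pathmeasure\bigl(|\omega_1|_\Tdp > \varepsilon n\bigr) < \infty$ for every $\varepsilon > 0$, and Borel--Cantelli gives $\tfrac1n |\omega_{n+1}|_\Tdp \to 0$ almost surely, which is (i). For (ii), recall that the horocyclic map $\phi$ is a homomorphism from $\Aff \Tdp$ to $\I$ with $\phi(\alpha) = h(\alpha o)$, so $h(R_n o) = \phi(R_n) = \phi(\omega_1) + \dots + \phi(\omega_n)$ is a sum of i.i.d.\ integer-valued terms whose common distribution has finite first moment --- this is exactly the inequality $\mm{\pf{\phi}{\mu}} \leq \mm{\mu} < \infty$ established above. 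The strong law of large numbers then gives $\tfrac1n\, h(R_n o) \to \mm{\pf{\phi}{\mu}}$ almost surely and in $L^1$, so $a_h$ exists and equals $\mm{\pf{\phi}{\mu}}$; feeding (i) and (ii) into Proposition \ref{prop:regularity} shows that $\{R_n o\}$ is almost surely regular with rate of escape $|\mm{\pf{\phi}{\mu}}|$, i.e.\ $\tfrac1n |R_n|_\Tdp \to |\mm{\pf{\phi}{\mu}}|$ almost surely.

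For the $L^1$ statement I would argue by domination. Subadditivity of the gauge function $|\cdot|_\Tdp$ gives $0 \leq |R_n|_\Tdp \leq \sum_{i=1}^n |\omega_i|_\Tdp$, so $\tfrac1n |R_n|_\Tdp$ is dominated by $\tfrac1n \sum_{i=1}^n |\omega_i|_\Tdp$. The latter converges (to $\mm{\mu}$) almost surely and in $L^1$ by the strong law, hence is uniformly integrable; therefore the dominated family $\{\tfrac1n |R_n|_\Tdp\}_n$ is uniformly integrable as well, and, combined with the almost sure convergence already obtained, this gives convergence in $L^1$.

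I expect the only substantive point to be the passage from a one-sided bound to the exact rate of escape. The elementary estimate $|v| \geq |h(v)|$ proved above gives directly only $\liminf_n \tfrac1n |R_n|_\Tdp \geq |a_h|$; the matching upper bound is forced by the sublinearity of the consecutive displacements $d(R_n o, R_{n+1}o)$ (condition (i)) through the tree geometry packaged in Proposition \ref{prop:regularity}. Thus the genuinely geometric work has been isolated into that proposition, and the probabilistic content of the theorem itself is light: one Borel--Cantelli argument and two applications of the strong law of large numbers.
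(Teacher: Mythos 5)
Your proof is correct, and it reduces the theorem to the same geometric input as the paper --- Proposition \ref{prop:regularity} and Lemma \ref{lem:regularity} --- but the probabilistic machinery is genuinely different. The paper first invokes the Kingman subadditive ergodic theorem (via subadditivity of $|\cdot|_\Tdp$ along the shift) to obtain existence of $\lim_n \tfrac1n |R_n|_\Tdp$ almost surely and in $L^1$, and then handles the increment condition by observing that $\tfrac1n \int d(R_n o, R_{n+1}o)\,d\prodmeasure \to 0$ and deducing almost sure convergence ``since $d$ is non-negative''. You avoid Kingman altogether: the identity $d(R_n o, R_{n+1}o) = |\omega_{n+1}|_\Tdp$ plus Borel--Cantelli gives condition (i), the classical strong law applied to $h(R_n o) = \sum_{i} \phi(\omega_i)$ gives condition (ii) with the limit already identified as $\mm{\pf{\phi}{\mu}}$, and uniform integrability via the domination $|R_n|_\Tdp \le \sum_{i=1}^n |\omega_i|_\Tdp$ supplies the $L^1$ statement. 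Your route is both more elementary and, at one point, more careful: the paper's passage from convergence of the expectations of the non-negative variables $\tfrac1n d(R_n o, R_{n+1}o)$ to their almost sure convergence is not a valid inference as written (it yields only a subsequence), whereas your Borel--Cantelli argument is the standard correct way to get sublinearity of i.i.d.\ increments with finite first moment. One small caveat: the first displayed limit in the theorem, $\lim_n d(R_n o, R_{n+1}o)=0$, is missing a factor $\tfrac1n$ (as literally stated it would force $\mu$ to be supported on the stabilizer of $o$); what you prove, and what the regularity lemma requires, is the $\tfrac1n$-normalised version, which is what appears in Theorem 4 of Cartwright--Kaimanovich--Woess.
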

\begin{proof}
	The existence of the limit is the same argument as Woess \cite{woess00} Theorem 8.14. If $n$ and $m$ are natural numbers and $n$ is greater than $m$, then 
	\begin{align*}
	|R_n(\omega)|_K = d(e, R_{n+m}(\omega)) &\leq d(e, R_m(\omega)) + d(R_m(\omega) e, R_{n+m}(\omega)) \\
	&\leq d(e, R_m(\omega)) + d( e, X_{m}(\omega) \cdot \ldots \cdot X_{n+m}(\omega)) \\
	&= d(e, R_m(\omega)) + d( e, R_{n}(T^m\omega))
	\end{align*}
	for $\mu^\NN$-almost all $\omega$ in $G^\NN$, where $T$ is the measure preserving left shift map. It follows that, if $\mu$ has finite first moment with respect to $|\cdot|_{\Tdp}$, then the limit, $R_K$, exists and is well defined by applying the Kingman Subadditive Ergodic Theorem in the form given by e.g. Steele \cite{steele1989kingman}. It follows that the limit $\lim_{n \rightarrow \infty} \frac{1}{n} h(R_n o) = \mm{\pf{\phi}{\mu}}$ exists. 
	
	By the Monotone Convergence Theorem,
	\begin{align*}
	0 &= \lim_{n \rightarrow \infty} \frac{1}{n} \int_{\Aff \Tdp} | x | \, d \mu(x) \\
	&=  \lim_{n \rightarrow \infty}  \int_{\Aff \Tdp} \frac{1}{n} | x | \, d \mu(x) \\ 
	&=  \lim_{n \rightarrow \infty}  \int_{\Aff \Tdp} \frac{1}{n} | X_{n+1}(\omega) | \, d \prodmeasure(\omega) \\ 
	&=  \lim_{n \rightarrow \infty}  \int_{\Aff \Tdp} \frac{1}{n} d \left ( R_n(\omega) o, R_{n+1}(\omega) o \right ) \, d \prodmeasure(\omega). 
	\end{align*}
	Since $d$ is always non-negative, $\lim_{n \rightarrow \infty} d \left ( R_n(\omega) o, R_{n+1}(\omega)o \right )$ is zero almost surely. Therefore, $R_n o$ is regular by Proposition \ref{prop:regularity} and $|R_n |$ converges to $|\mm{\pf{\phi}{\mu}}|$ by Lemma \ref{lem:regularity}.
\end{proof}

\begin{theorem}[Cartwright, Kaimanovich and Woess \cite{cartwright94}, Theorem 2]
	\label{theo:conv}
	Let $\mu$ be a probability measure on $\Aff \Tdp$. Suppose that the closed group generated by the support of $\mu$ is non-exceptional. Let $\phi$ be the horocyclic map from $\Aff \Tdp$ to the integers. Let $\{ R_i \}_{i \in \NN}$ be the right random walk associated with $(\Aff \Tdp, \mu)$ Let $o$ be a distinguished vertex of $\Tdp$, and let $\omega$ be the fixed end. 
	\begin{enumerate}[(i)]
		\item If $m_1 (\pf{\phi}{\mu})$ is finite and the mean of $\pf{\phi}{\mu}$ is negative, then $R_n o$ converges almost surely to $\omega$.
		\item If $m_1 (\mu)$, with respect to $| \cdot |_{\Tdp}$, is finite and the mean of $\pf{\phi}{\mu}$ is positive, then $R_n o$ converges to a random end in $\partial^* \Tdp$ almost surely.
		\item If $m_1 (\mu)$, with respect to $| \cdot |_{\Tdp}$, is finite and the mean of $\pf{\phi}{\mu}$ is zero, and
		\[ \EV \left ( \left | o \wedge R_1^{-1} o \right | q^{|o \wedge R_1 o|} \right ) \]
		is finite then $R_n o$ converges to $\omega$ almost surely.
	\end{enumerate}
\end{theorem}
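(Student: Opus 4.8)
The plan is to reduce all three cases to the behaviour of the integer-valued random walk $\phi(R_n)=h(R_n o)=\sum_{i=1}^{n}\phi(\omega_i)$, whose increments are i.i.d.\ with law $\pf{\phi}{\mu}$, via the elementary tree identity that for any vertex $v$ the confluent $v\curlywedge\omega$ (equivalently the least common $\omega$-ancestor $o\wedge v$) lies on $\overline{o\omega}$ at distance $\tfrac12\bigl(|v|-h(v)\bigr)$ from $o$; hence $\theta(v,\omega)=q^{-\frac12(|v|-h(v))}$, and $R_n o\to\omega$ in $\Tdp\cup\partial\Tdp$ if and only if $|R_n o|-\phi(R_n)\to\infty$. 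Since the closed group generated by $\supp\mu$ is non-exceptional, $\phi$ is nontrivial on it, so $\pf{\phi}{\mu}\neq\delta_0$; and whenever $\mu$ has finite first moment with respect to $|\cdot|_\Tdp$ so does $\pf{\phi}{\mu}$, because $|\phi(\gamma)|\le|\gamma|_\Tdp$. In case (i) the finite mean of $\pf{\phi}{\mu}$ is strictly negative, so the strong law of large numbers gives $\phi(R_n)\to-\infty$ almost surely, and since $|R_n o|\ge 0$ this yields $|R_n o|-\phi(R_n)\to\infty$, i.e.\ $R_n o\to\omega$ almost surely.

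For case (ii) put $\bar{m}:=m_1(\pf{\phi}{\mu})\in(0,\infty)$, so $\tfrac1n\phi(R_n)\to\bar{m}$ almost surely by the strong law. By Theorem \ref{thm:roeafft} and Proposition \ref{prop:regularity}, $\{R_n o\}$ is regular; its rate of escape is $|\bar{m}|=\bar{m}>0$, so by the definition of regularity $R_n o$ converges almost surely to the associated (random) end $\xi$, and $\xi\notin\Tdp$ because $|R_n o|\to\infty$. It remains to check $\xi\neq\omega$ almost surely. On $\{\xi=\omega\}$ the vertices $\xi_{\lfloor\bar{m}n\rfloor}$ lie on $\overline{o\omega}$ with $h(\xi_{\lfloor\bar{m}n\rfloor})=-\lfloor\bar{m}n\rfloor$, so, since $h$ is $1$-Lipschitz on vertices and $d(R_n o,\xi_{\lfloor\bar{m}n\rfloor})=o(n)$, we would obtain $\tfrac1n\phi(R_n)\to-\bar{m}<0$, contradicting $\tfrac1n\phi(R_n)\to\bar{m}$. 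Hence $\xi\in\partial^*\Tdp$ almost surely.

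Case (iii) is the substantial one. Here $\pf{\phi}{\mu}$ is centred and, being $\neq\delta_0$ with mean zero, nondegenerate, so $\{\phi(R_n)\}$ is recurrent (a centred random walk on $\I$ with finite first moment), whence $\liminf_n\phi(R_n)=-\infty$ almost surely; this already gives $\limsup_n\bigl(|R_n o|-\phi(R_n)\bigr)=\infty$. Moreover the walk is transient by Corollary \ref{cor:nonexceptimpliestransient}, and since $\{\gamma\in\Aff\Tdp:|\gamma o|\le C\}$ is compact for every $C$, transience forces $|R_n o|\to\infty$. The task is to upgrade the $\limsup$ to a genuine limit. For this I would fix a level $k$ and bound the probability that the $(n{+}1)$-th step carries the walk downward across level $k$, that is, $|o\wedge R_n o|>k$ while $|o\wedge R_{n+1}o|\le k$. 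Because $R_n$ fixes $\omega$, one has $o\wedge R_{n+1}o=R_n\bigl(R_n^{-1}o\wedge\omega_{n+1}o\bigr)$ and hence $h(o\wedge R_{n+1}o)=h\bigl(R_n^{-1}o\wedge\omega_{n+1}o\bigr)+\phi(R_n)$; from this one reads off that such a downcrossing forces the (independent) increment $\omega_{n+1}$ to reach deep towards $\omega$ relative to the current value $|o\wedge R_n o|$, an event whose probability, summed over $n$, converges precisely because of the hypothesis $\EV\bigl(|o\wedge R_1^{-1}o|\,q^{|o\wedge R_1 o|}\bigr)<\infty$. Borel--Cantelli then shows that each level is crossed downward only finitely often, whence $|R_n o|-\phi(R_n)\to\infty$, i.e.\ $R_n o\to\omega$ almost surely.

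The applications of the strong law, the appeal to regularity, and the tree identities are routine; the crux is the Borel--Cantelli estimate in case (iii). The delicate points there are to quantify correctly how far towards $\omega$ a single increment can pull the walk back, to match the weight $q^{|o\wedge R_1 o|}$ against the combinatorial (harmonic-measure) factors it must absorb, and to control the interaction of these rare ``pull-back'' steps with the recurrence of $\{\phi(R_n)\}$, which is itself not summable; this is exactly where the extra integrability hypothesis of (iii) is genuinely used.
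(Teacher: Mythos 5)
First, a point of comparison: the paper does not prove Theorem \ref{theo:conv} at all --- it is imported verbatim from Cartwright, Kaimanovich and Woess \cite{cartwright94} (their Theorem 2) --- so there is no internal proof to measure you against. On its own terms, your setup is correct: the identity $|o\wedge v|=\tfrac12\bigl(|v|-h(v)\bigr)$ and the equivalence $R_n o\to\omega\iff|R_n o|-\phi(R_n)\to\infty$ are right; case (i) follows cleanly from the strong law applied to $\phi(R_n)$ alone (and you correctly use only the weaker moment hypothesis stated there); and case (ii) is a complete argument via Theorem \ref{thm:roeafft}, Proposition \ref{prop:regularity} and the $1$-Lipschitz property of $h$, with a clean contradiction ruling out $\xi=\omega$.

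The gap is case (iii), which is the only part of the statement with real content, and you have not proved it. What you give is a plan --- control downcrossings of each level $k$ by $|o\wedge R_n o|$ and apply Borel--Cantelli --- together with an explicit admission that the summability estimate matching the hypothesis $\EV\bigl(|o\wedge R_1^{-1}o|\,q^{|o\wedge R_1 o|}\bigr)<\infty$ against the recurrence of $\phi(R_n)$ is not carried out. That estimate is precisely the theorem: since $\pf{\phi}{\mu}$ is centred and nondegenerate, $\phi(R_n)$ is recurrent, so the times at which a downcrossing of level $k$ is even possible recur with non-summable frequency, and a term-by-term bound on $\Pr\bigl[|o\wedge R_n o|>k,\ |o\wedge R_{n+1}o|\le k\bigr]$ does not obviously converge; one must use the joint distribution of $|o\wedge R_n o|$, $\phi(R_n)$ and the independent increment, and it is exactly there that the unusual moment condition (note it couples the confluent of $R_1^{-1}o$ with an exponential weight in the confluent of $R_1 o$) has to be spent. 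In \cite{cartwright94} this case is handled by a separate, genuinely delicate analysis of the confluent process rather than a one-line Borel--Cantelli. Until that estimate is written down and verified, case (iii) --- and hence the theorem --- is unproven.
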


\section{Products of affine automorphism groups of homogeneous trees}

Let $P = \prod_{i=1}^k \Aff \Td{i}$ be a finite direct product of affine automorphism groups of trees with the product topology. For each affine group $\Aff \Td{j}$, let $\omega_j$ be the fixed end, let $o_j$ be the distinguished vertex and let $d_j$ be the vertex degree. Let $\phi_j$ be the horocyclic map from $\Aff \Td{j}$ to $\I$. Let $\omega = \{ \omega_i \}_{i=1}^k$, $o = \{ o_i \}_{i=1}^k$ and $\phi = \prod_{i=1}^k \phi_i $. Let $\pi_j$ be the projection map from $P$ to $ \Aff \Td{j}$. Given an element $\alpha$ in $P$, write $\alpha_j$ to mean $\pi_j(\alpha)$. Similarly, if $v$ is in $\prod_{i=1}^k \Td{i}$, we write $v_j$ to mean the projection of $v$ to $\Td{j}$.

Notice that $P$ is $\sigma$-compact because it is a product of $\sigma$-compact spaces, and that the topology is second countable, locally compact and totally disconnected.  The action of each factor $\Aff \Td{j}$ on $\partial \Td{j} \cup \Td{j} $ extends to an action of $P$ on the Cartesian product $\prod_{i=1}^k \left (\partial \Td{i} \cup \Td{i} \right )$.
Denote by $\lambda_P$ the right Haar measure on $P$ normalised so that compact open subgroups of the form
\[ \operatorname{stab}_{P}(v) = \{ \beta \in P, \ \beta_i{v_i} = v_i \ \forall 0 < i \leq k \}, \]
have measure $1$, where $v = \{ v_i \}_{i=1}^k$, and $v_j$ is in $\Td{j}$ for each natural number $j$. Since $P$ is not compact, $\lambda_{P}$ is an infinite measure.

\subsection{Partially exceptional and fully exceptional subgroups}
Suppose that $\Gamma$ is a closed subgroup in $P$.  If $\pi_j(\Gamma)$ is a exceptional subgroup of $\pi_j(P)$ for every natural number $j$ less than or equal to $k$, then we say that  $\Gamma$ is \emph{fully exceptional}. If there is at least one natural number $j$, such that $\pi_j(\Gamma)$ is a exceptional subgroup of $\pi_j(P)$, then we say that $\Gamma$ is \emph{partially exceptional}. Obviously, if $\Gamma$ is fully exceptional then it is partially exceptional. A partially exceptional subgroup need not be uniscalar nor unimodular.

 \begin{proposition}
	Let $\Gamma$ be an closed subgroup in finite product of affine automorphism groups of trees, $P = \prod_{i=1}^k \Aff \Td{i}$. Then, $\Gamma$ is fully exceptional if and only if it is uniscalar.
\end{proposition}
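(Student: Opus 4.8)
\emph{Plan.} Both implications reduce to the single-tree statements, Proposition~\ref{prop:exceptionalimpliesuniscalar} and Theorem~\ref{thm:uniexception}.

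\emph{Fully exceptional $\Rightarrow$ uniscalar.} I would extend the proof of Proposition~\ref{prop:exceptionalimpliesuniscalar} coordinatewise. Fix $g$ in $\Gamma$ and write $g_i = \pi_i(g)$. Using exceptionality of $\pi_i(\Gamma)$, choose for each $i$ a vertex $v_i$ of $\Td{i}$: if $\pi_i(\Gamma) \subseteq \Hor\Td{i}$, let $v_i$ be the common ancestor of $o_i$ and $g_i o_i$, which the horocyclic automorphism $g_i$ fixes; if instead there is an end $\xi_i$ in $\partial^*\Td{i}$ fixed by every element of $\pi_i(\Gamma)$, let $v_i$ be any vertex on the doubly infinite path $\overline{\xi_i\omega_i}$. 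Put $\tilde v = (v_1,\dots,v_k)$ and let $W = \operatorname{stab}_\Gamma(\tilde v) = \{\delta \in \Gamma : \delta_i v_i = v_i \text{ for all } i\}$, the intersection of $\Gamma$ with a compact open subgroup of $P$, hence compact open in $\Gamma$. Then conjugation by $g$ carries $W$ onto $\operatorname{stab}_\Gamma(g_1 v_1,\dots,g_k v_k)$, and this equals $W$: for $\eta \in \Gamma$ one has $\eta_i v_i = v_i$ if and only if $\eta_i(g_i v_i) = g_i v_i$, because in the first case $g_i v_i = v_i$, while in the second case $g_i$ and every $\eta_i = \pi_i(\eta)$ fix both $\xi_i$ and $\omega_i$, hence translate $\overline{\xi_i\omega_i}$, so $g_i v_i$ again lies on that path and $\eta_i$ fixes $v_i$ exactly when it acts trivially on the path, i.e.\ exactly when it fixes $g_i v_i$. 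Therefore $s_\Gamma(g) = [\alpha_g(W) : W \cap \alpha_g(W)] = 1$, and as $g$ was arbitrary $\Gamma$ is uniscalar.

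\emph{Uniscalar $\Rightarrow$ fully exceptional.} The engine is the (routine) fact that the scale function does not increase under a continuous open surjective homomorphism $q \colon G \twoheadrightarrow H$ of second countable t.d.l.c.\ groups: choosing a compact open $V \leq G$ with $[\alpha_g(V) : V \cap \alpha_g(V)] = s_G(g)$, its image $q(V)$ is compact open in $H$, and since $q(V \cap \alpha_g(V)) \subseteq q(V) \cap \alpha_{q(g)}(q(V))$ and $[q(A):q(B)]$ divides $[A:B]$ whenever $B \leq A$, one obtains $s_H(q(g)) \leq s_G(g)$; in particular every quotient of a uniscalar group by a closed normal subgroup is uniscalar. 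Now suppose $\Gamma$ is uniscalar. For each $j$ the map $\pi_j \colon \Gamma \to \pi_j(\Gamma)$ is the quotient of $\Gamma$ by the closed normal subgroup $\Gamma \cap \ker\pi_j$, so $\pi_j(\Gamma)$ is a uniscalar closed subgroup of $\Aff\Td{j}$. A uniscalar group is unimodular, so by Theorem~\ref{thm:uniexception} $\pi_j(\Gamma)$ is exceptional; since this holds for every $j$, $\Gamma$ is fully exceptional by definition.

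\emph{Main obstacle.} The delicate point, used in the second part, is that $\pi_j(\Gamma)$ is a \emph{closed} subgroup of $\Aff\Td{j}$ --- the projection of a closed subgroup of a direct product need not be closed when the remaining factors are non-compact. I expect this to be dealt with either by exploiting the rigidity of the affine groups of trees, or by running the argument through the Polish group $\Gamma/(\Gamma \cap \ker\pi_j)$ directly: it acts on $\Td{j}$ fixing $\omega_j$ and, as one checks, with compact open vertex stabilizers, so that the proof of Theorem~\ref{thm:uniexception} (which only uses Woess's orbit-counting lemma for such an action) applies verbatim. The scale-monotonicity lemma, although short, should be stated explicitly, since it is precisely what makes the reduction to the single-tree case work; everything else is immediate from the results already established.
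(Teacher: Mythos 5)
Your first implication is correct and follows the paper's own proof of the forward direction almost verbatim: the same coordinatewise choice of vertices (a fixed common ancestor in the horocyclic coordinates, a vertex on the invariant geodesic $\overline{\xi_i \omega_i}$ in the others), the same compact open stabilizer $W=\operatorname{stab}_\Gamma(\tilde v)$, and the same verification that conjugation by $g$ preserves $W$. The scale-monotonicity lemma you invoke in the second implication is also correct (it is essentially in Willis's work), so $Q_j = \Gamma/(\Gamma\cap\ker\pi_j)$ is indeed a uniscalar, hence unimodular, t.d.l.c.\ group.

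The gap is exactly where you place it, and neither of your proposed repairs closes it. Theorem~\ref{thm:uniexception} is a statement about a \emph{closed} subgroup of $\Aff\Td{j}$ with its induced topology and Haar measure; the group you actually control, $Q_j$, only injects continuously into $\Aff\Td{j}$, its image need not be closed, and its quotient topology can be strictly finer than the subspace topology on $\pi_j(\Gamma)$, so unimodularity of $Q_j$ does not transfer to $\overline{\pi_j(\Gamma)}$. Your concrete patch --- that $Q_j$ acts on $\Td{j}$ with compact open vertex stabilizers, so that the proof of Theorem~\ref{thm:uniexception} applies verbatim --- is not something one can simply check: the stabilizer of $v_j$ in $Q_j$ is the image of the open subgroup $\Gamma\cap\bigl(\operatorname{stab}_{\Aff\Td{j}}(v_j)\times\prod_{i\neq j}\Aff\Td{i}\bigr)$, which is typically non-compact, and its quotient by $\Gamma\cap\ker\pi_j$ is compact only if that subgroup is contained in $K\cdot(\Gamma\cap\ker\pi_j)$ for some compact $K$ --- a nontrivial cocompactness assertion that fails for general closed subgroups of products of t.d.l.c.\ groups and would itself require proof here. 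Without compact stabilizers, Woess's orbit-counting formula for the modular function, which is the engine of the proof of Theorem~\ref{thm:uniexception}, is unavailable. The paper sidesteps all of this by proving the contrapositive \emph{inside} $\Gamma$: for a coordinate $j$ with $\pi_j(\Gamma)$ non-exceptional it produces an element $\gamma\in\Gamma$ with $\phi_j(\gamma)=-r_j\neq 0$ and a tuple of vertices $v$ whose stabilizer $U=\operatorname{stab}_\Gamma(v)$ is compact open in $\Gamma$ (only closedness of $\Gamma$ in $P$ is used), shows $U$ is tidy for conjugation by $\gamma$, and bounds $[\gamma U\gamma^{-1}:U\cap\gamma U\gamma^{-1}]$ below by $d_j>1$; since tidy subgroups are minimizing, $s(\gamma)>1$. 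To keep your reduction you would have to either prove the cocompactness claim above or replace the appeal to Theorem~\ref{thm:uniexception} by such a direct construction.
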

\begin{proof}
	Suppose that $\Gamma$ is fully exceptional. Let $\gamma$ be an automorphism of $\Gamma$. Let $H$ be the set of natural numbers $i$, such that $\pi_i(\Gamma)$ is contained in $\Hor \Td{i}$. Let $H'$ be the set of natural numbers $i$ not in $H$, which are less than or equal to $k$. Then, $\gamma_i $ fixes the element $v_i := o_i \wedge \gamma_i o_i$ in $\Td{i}$ for all $i$ in $H$.
	For each $i$ in $H$, let $\nu_i$ in $\partial^* \Td{i}$ be the element fixed by $\pi_i(\Gamma)$ and let $v_i$ be a vertex in $\overline{\nu_i \omega_i}$. Then, $v_i$ is translated by the action of $\gamma_i$ for each $i$ in $H'$. Let 
	\[ V = \operatorname{stab}_{\Gamma}\left ( v \right ), \]
	where $v :=  \{v_i\}_{i=1}^k $ is in $\prod_{i=1}^k \Td{i}$.
	
	Obviously, $V$ is compact and open. For each $i$ in $H'$, let $g$ be the element in $P$, such that $\gamma_i (x) = g_ixg_i^{-1}$ for every $x$ in $\Aff \Td{i}$. For each $i$ in $H$, let $g_i$ be the identity element in $\Aff \Td{i}$. Then, since $\gamma_i$ fixes $\overline{\nu_i \omega_i}$ for each $i$ in $H'$, 
	\[ \gamma(V) = \operatorname{stab}_\Gamma(g \cdot v) = \operatorname{stab}_\Gamma(v) = V, \]
	where $g$ acts pointwise on $v$. It follows that
	\[ s(\gamma) = [ \alpha(V) : V \cap \alpha(V) ] = 1.  \]
	Since $\gamma$ was an arbitrary automorphism, $\Gamma$ is uniscalar.
	
	Suppose instead that $\Gamma$ is not fully exceptional. We will construct a compact open subgroup $U$ which is tidy for an automorphism $\alpha$ of $P$ with scale greater than one. Let $E$ be the set of natural numbers $i$ less than or equal to $k$ for which $\pi_i(\Gamma)$ is non-exceptional. Then $E$ is non-empty. Let $E'$ be the set of natural numbers $i$ less than or equal to $k$ not in $E$, so that $\pi_i(\Gamma)$ is exceptional. As before, $H$ be the set of natural numbers $i$, such that $\pi_i(\Gamma)$ is contained in $\Hor \Td{i}$ and let $H'$ be the set of natural numbers $i$ in $E'$ and not in $H$.
	
	Let $j$ be a particular element of $E$. Then, $\pi_j(\Gamma)$ is non-exceptional, $\phi_j (\Gamma)$ is a non-trivial homomorphism, and $\phi_j (\Gamma) = r_j \I$, where
	\[ r_j = \min \left \{ \phi_j(\gamma_j) > 0 : \{ \gamma_i \}_{i=1}^k \in \Gamma \right \}. \]
	Choose $\gamma $ in $\Gamma$, such that $\phi_j (\gamma) = -r_j$. Then $\gamma$ acts by translation on $\Td{i}$. Let $v_j$ be a vertex in $\Td{j}$ on the axis of translation of $\gamma_j$. For the natural numbers $i$ in $E \setminus \{j\}$, $\gamma_i$, choose $v_i$ to be the vertex in $\Td{i}$ if $\gamma_i$ is elliptic, and choose $v_i$ to be a vertex in $\Td{i}$ on the axis of translation if $\gamma_i$ is hyperbolic. For each $i$ in $E'$, let $v_i := o_i \wedge \gamma_i o_i$ in $\Td{i}$, which is fixed by the action of $\gamma$. If $i$ is in $E$. For each $i$ in $E$, let $v_i$ be a vertex on the axis fixed by $\phi_i(\Gamma)$. Let $\alpha$ be conjugation by $\gamma$, and let $U$ be the compact open subgroup given by
	\[ U =  \operatorname{stab}_{\Gamma}\left ( v \right ), \]
	where $v :=  \{v_i\}_{i=1}^k $.	Then,
	%
	%
	\[ U \cap \alpha(U) = \operatorname{stab}_P \{ x_i \}_{i=1}^k,  \]
	where
	\[ x_r = \begin{cases} v_r &\textrm{if} \ r \in E, \\ v_r &\textrm{if} \ r \in E' \ \textrm{and} \ \phi_r(\gamma) \geq 0, \\
	\gamma_r v_r &\textrm{otherwise}   \end{cases} \]
	for each natural number $r$ less than or equal to $k$. Hence, 
	\[ [\alpha(U) : U \cap \alpha(U)] \geq d_j \]
	by choice of $\gamma$. Since $\omega_i$ is fixed by $\phi_i(\Gamma)$ for each natural number $i$ less than or equal to $k$, $U = U_+ U_-$, and $U$ is tidy above. It is tidy below because $U_{++}$ is a union of open sets. Hence, $\Gamma$ is not uniscalar. 	
\end{proof}
\begin{proposition}
	\label{prop:uniscalartransient}
	Suppose that $\mu$ is a probability measure on the finite product of affine automorphism groups of trees, $P = \prod_{i=1}^k \Aff \Td{i}$. Suppose that the closed group generated by the support of $\mu$, $\gr \mu$, is not fully exceptional. Then, $(P, \mu)$ is transient, i.e. the right random walk associated with $(P, \mu)$ eventually leaves every compact set $E$ almost surely.
\end{proposition}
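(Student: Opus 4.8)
\textit{Proof proposal.}
The plan is to reduce the statement to the single-tree case, Corollary~\ref{cor:nonexceptimpliestransient}, by pushing the walk forward along a coordinate projection. By definition of ``fully exceptional'', the hypothesis that $\gr\mu$ is \emph{not} fully exceptional means exactly that there is an index $j$ with $1 \le j \le k$ for which $\pi_j(\gr\mu)$ is a non-exceptional subgroup of $\Aff\Td{j}$. I would note at the outset that exceptionality is invariant under taking closures: being contained in $\Hor\Td{j}$, and fixing some end of $\Td{j}$ other than $\omega_j$, are both closed conditions on a subgroup, so ``$\pi_j(\gr\mu)$ non-exceptional'' is equivalent to ``$\overline{\pi_j(\gr\mu)}$ non-exceptional''.

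Next I would identify the projected random walk. Put $\mu_j = \pf{\pi_j}{\mu}$, a probability measure on $\Aff\Td{j}$. Because $\pi_j$ is a continuous homomorphism and $\{R_n\}$ is the right random walk of $(P,\mu)$, the process $\{\pi_j(R_n)\}$ is the right random walk of $(\Aff\Td{j},\mu_j)$. I would also check that $\gr\mu_j = \overline{\pi_j(\gr\mu)}$: the inclusion ``$\subseteq$'' holds because $\pi_j(\supp\mu)$ lies in the subgroup $\pi_j(\gr\mu)$, and ``$\supseteq$'' because $\langle\supp\mu\rangle$ is dense in $\gr\mu$ while $\pi_j$ is continuous. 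Together with the previous remark this makes $\gr\mu_j$ non-exceptional, so Corollary~\ref{cor:nonexceptimpliestransient} applies: $(\Aff\Td{j},\mu_j)$ is transient, i.e.\ $\pi_j(R_n)$ almost surely leaves every compact subset of $\Aff\Td{j}$ eventually.

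It remains to transfer transience back to $P$. Let $E \subseteq P$ be compact; then $\pi_j(E)$ is compact in $\Aff\Td{j}$, so almost surely there is an $N$ with $\pi_j(R_n) \notin \pi_j(E)$ for all $n \ge N$. Since $R_n \in E$ would force $\pi_j(R_n) \in \pi_j(E)$, we conclude $R_n \notin E$ for all $n \ge N$; as $E$ was arbitrary, $(P,\mu)$ is transient.

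The proof is short and essentially all bookkeeping. The only point needing real care --- and the only place I anticipate any friction --- is the chain of identifications $\gr\mu_j = \overline{\pi_j(\gr\mu)}$ combined with the closure-invariance of exceptionality, since this is what converts the hypothesis (phrased in terms of $\pi_j(\gr\mu)$) into the hypothesis of Corollary~\ref{cor:nonexceptimpliestransient} (phrased in terms of the closed group generated by $\supp\mu_j$). The remaining facts --- projections of right random walks are right random walks, continuous images of compacta are compact, and avoiding $\pi_j(E)$ forces avoiding $E$ --- are routine.
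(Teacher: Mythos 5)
Your proof is correct and follows essentially the same route as the paper: project to a coordinate $j$ where $\pi_j(\gr\mu)$ is non-exceptional, apply Corollary~\ref{cor:nonexceptimpliestransient} to the pushforward walk, and pull transience back via compactness of $\pi_j(E)$. The only difference is that you explicitly verify $\gr(\pf{\pi_j}{\mu}) = \overline{\pi_j(\gr\mu)}$ and that exceptionality passes to closures, a point the paper's proof glosses over; this is a worthwhile addition but not a change of method.
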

\begin{proof}
	Since $\gr \mu$ is not fully exceptional, there is a natural number, $j$, such that $\pi_j(\gr \mu)$ is a non-exceptional subgroup of $\Aff \Td{j}$. 	Let $E$ be a compact set in $\gr \mu$. Then $\pi_j(E)$ is compact because $\pi_j$ is continuous and $\Gamma$ is Hausdorff. Corollary \ref{cor:nonexceptimpliestransient} implies that the projected random walk $(\Aff \Td{j}, \pf{(\pi_j)}{\mu})$ is transient., i.e. the right random walk corresponding to $(\Aff \Td{j}, \pf{\pi_j}{\mu})$ leaves $\pi_j(E)$ almost surely after finitely many steps.  It follows that the right random walk associated with $(P, \mu)$ eventually leaves $E$ almost surely.
\end{proof}
\begin{lemma}
	\label{lem:unimoduniscale}
	A closed subgroup $\Gamma$ in $P$ can be unimodular but not uniscalar. 
\end{lemma}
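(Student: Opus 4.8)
It suffices to produce such a subgroup inside $P = \Aff \Tdp \times \Aff \Tdp$, a product of two copies of the affine group of a single homogeneous tree $\Tdp$ of valency $d \geq 3$; write $\phi_1, \phi_2$ for the horocyclic maps on the two factors, $\omega$ for the common distinguished end, and $o_1, o_2$ for distinguished vertices. Using the decomposition $\Aff \Tdp \cong \I \ltimes \Hor \Tdp$, fix $\sigma \in \Aff \Tdp$ with $\phi(\sigma) = 1$, and put
\[ a := (\sigma, \sigma^{-1}) \in P, \qquad \Gamma := \{\, (g,h) \in P : \phi_1(g) + \phi_2(h) = 0 \,\}. \]
As $\phi_1, \phi_2$ are continuous homomorphisms, $\Gamma$ is a closed subgroup of $P$; it contains $N := \Hor \Tdp \times \Hor \Tdp = \Gamma \cap \{ \phi_1 = 0 \}$, which is open and normal in $\Gamma$, with $\Gamma / N \cong \I$ generated by the class of $a$, so $\Gamma = N \rtimes \langle a \rangle$ with $\langle a \rangle \cong \I$ discrete.

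First I would check that $\Gamma$ is unimodular. The group $\Hor \Tdp$ has no hyperbolic or inverting element: a hyperbolic tree automorphism fixes exactly the two ends of its axis, so one fixing $\omega$ would have $\omega$ on its axis and hence $\phi \neq 0$, while an inversion fixes no end. Thus every $\beta \in \Hor \Tdp$ fixes a vertex $v$, normalises the compact open subgroup $\operatorname{stab}_{\Hor \Tdp}(v)$, and so has $\Delta_{\Hor \Tdp}(\beta) = 1$; hence $\Hor \Tdp$, and therefore $N$, is unimodular, and $\Delta_\Gamma$ vanishes on $N$. It then remains to compute $\Delta_\Gamma(a)$, which is the module of the conjugation automorphism $c_a$ of $N$. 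Since $\Aff \Tdp = \langle \sigma \rangle \ltimes \Hor \Tdp$ with $\langle \sigma \rangle$ discrete, the module of $c_\sigma$ on $\Hor \Tdp$ equals $\Delta_{\Aff \Tdp}(\sigma)$; because $c_a = c_\sigma \times c_{\sigma^{-1}}$ on $N = \Hor \Tdp \times \Hor \Tdp$, its module is $\Delta_{\Aff \Tdp}(\sigma)\,\Delta_{\Aff \Tdp}(\sigma^{-1}) = 1$, as $\Delta_{\Aff \Tdp}$ is a homomorphism. Hence $\Delta_\Gamma \equiv 1$. The cancellation is forced simply by the two components of $a$ being mutually inverse, so one never needs the value $\Delta_{\Aff \Tdp}(\sigma) = (d-1)^{\pm 1}$.

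Next I would show $\Gamma$ is not uniscalar. Its first projection is all of $\Aff \Tdp$ (given $g$, pick $h$ with $\phi_2(h) = -\phi_1(g)$), and $\Aff \Tdp$ is non-exceptional: it is not contained in $\Hor \Tdp$, and it fixes no end besides $\omega$, since the stabiliser of $o_1$ in $\Aff \Tdp$ already moves every end $\neq \omega$. So $\Gamma$ is not fully exceptional, and by the proposition of this section identifying the fully exceptional closed subgroups of $P$ with the uniscalar ones, $\Gamma$ is not uniscalar. One can also see this directly: the compact open subgroup $U := \operatorname{stab}_{\Hor \Tdp}(o_1) \times \operatorname{stab}_{\Hor \Tdp}(o_2)$ of $\Gamma$ is tidy for $c_a$, and $[\, c_a(U) : U \cap c_a(U) \,] = d-1 > 1$, the factor $d-1$ arising because $\sigma^{-1}$ carries $o_2$ one step toward $\omega$ and the stabiliser of $\sigma^{-1} o_2$ in $\Hor \Tdp$ permutes the $d-1$ children transitively; hence $s_\Gamma(a) = d-1$. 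Either way, $\Gamma$ is unimodular but not uniscalar.

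I expect the only delicate points to be topological rather than deep: one must confirm that $\Hor \Tdp$ is open in $\Aff \Tdp$ (so that $\langle \sigma \rangle$ is discrete and the semidirect decompositions are topological) and invoke correctly the standard fact that the modular function of a closed normal subgroup is the restriction of the ambient one. In the unimodularity step it is important not to argue that the modular functions of the two projections cancel---modular functions do not restrict to subgroups---but to work through the internal $N \rtimes \I$ structure of $\Gamma$.
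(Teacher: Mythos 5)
Your construction is correct, but it is genuinely different from the paper's. The paper takes $\Gamma$ to be the \emph{discrete} infinite cyclic group generated by a single element $(\sigma_1,\sigma_2^{-1})$ whose horocyclic indices cancel; unimodularity then comes from Woess's orbit formula (and is automatic for a discrete group), while failure of uniscalarity is a direct computation $[\alpha(O):O\cap\alpha(O)]=d$ for the vertex stabiliser $O=\operatorname{stab}_P(o_1,o_2)$ in the ambient group $P$, plus a tidiness check. You instead take the large open subgroup $\Gamma=\ker(\phi_1+\phi_2)$, prove unimodularity structurally from $\Gamma=N\rtimes\I$ with $N=\Hor\Tdp\times\Hor\Tdp$ open (the module of $c_\sigma\times c_{\sigma^{-1}}$ on $N$ is $\Delta_{\Aff\Tdp}(\sigma)\,\Delta_{\Aff\Tdp}(\sigma)^{-1}=1$), and deduce non-uniscalarity from the preceding proposition because $\pi_1(\Gamma)=\Aff\Tdp$ is non-exceptional. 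Your version buys something real: since your $\Gamma$ is open in $P$, its compact open subgroups are compact open in $P$, so the conclusion is insensitive to whether the scale of $\gamma\in\Gamma$ is computed relative to $\Gamma$ or to $P$; the paper's discrete example is uniscalar as a group in its own right and is non-uniscalar only when the ambient scale function of $P$ is meant, so your example is the more robust witness. One caution on your fallback argument: you assert without proof that $U=\operatorname{stab}_{\Hor\Tdp}(o_1)\times\operatorname{stab}_{\Hor\Tdp}(o_2)$ is tidy for $c_a$, and without tidiness (or minimality) the index $[c_a(U):U\cap c_a(U)]=d-1$ only bounds the scale from \emph{above}, so by itself it would not show $s(a)>1$; the same gap is present in the paper's tidiness justification (``a union of open sets'' does not describe $O_{++}$, since $O_+$ is a pointwise ray stabiliser and is not open). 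Your primary route through the fully-exceptional/uniscalar proposition sidesteps this entirely, so the proof stands.
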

\begin{proof}
	We demonstrate how to construct an example for the case that $k=2$ and both trees have the same degree $d$. The same construction type of construction can be used if $k$ is larger and/or the trees don't have the same vertex degree.
	
	Let $\nu_1$ be an element of $\partial^* \Td{2}$ and let $\nu_2$ be an element of $\partial^* \Td{2}$. Let $\sigma_1$ be the element in $\Aff \Td{1}$ which acts as a translation along $\overline{\nu_1 \omega_1}$ away from $\omega_1$ with $\phi_1 (\sigma_1) = 1$. Similarly,  $\sigma_2$ be the element in $\Aff \Td{2}$ which acts as a translation along $\overline{\nu_2 \omega_2}$ with $\phi_2 (\sigma_2) = -1$ towards $\omega_1$. Let
	\[ \Gamma = \left \langle (\sigma_1, \sigma_2^{-1}) \right \rangle \]
	Lemma 5 in Woess \cite{woess1991topological} gives that
	\[ \Delta_P(\sigma_1, \sigma_2) = \frac{|\operatorname{stab}_G (o_1,  o_2)  \cdot  (\sigma_1 o_1, \sigma_2 o_2)|}{|\operatorname{stab}_G (\sigma_1 o_1, \sigma_2 o_2)  \cdot  (o_1, o_2)|}  = 1. \]
	As $\Delta_P$ is a homomorphism, it follows that $\Gamma$ is a unimodular subgroup. To see that $\Gamma$ is not uniscalar, let $\alpha = (\alpha_1, \alpha_2)$ be conjugation by $(\sigma_1, \sigma_2^{-1})$. Let $O$ be the stabilizer in $P$ of a pair of vertices $(o_1, o_2)$. Then $\alpha(O) = \operatorname{stab}_P (\sigma_1 o_1, \sigma_2^{-1} o_2)$. Since $\omega_1$ and $\omega_2$ are both fixed,
	%
	\begin{align*}
	O \cap \alpha(O) 
	&= \operatorname{stab}_P (\sigma_1  o_1, o_2) 	    
	\end{align*}
	Hence,
	\begin{align*}
	[\alpha(O) : O \cap \alpha(O)] = d, 
	\end{align*}
	which by assumption on $d$ is greater than one. We need to check that $O$ is tidy above and tidy below. Since $(\omega_1, \omega_2)$ is fixed, $O = O_+ O_-$ and $O$ is tidy above. It is tidy below because the subgroup
	\[ O_{++} = \bigcup_{k=0}^\infty \operatorname{stab}_P (\sigma_1^k o_1, \sigma_1^k o_2) \]
	is a union of open sets, which is therefore closed. It follows that $\alpha$ has scale $d$, and $\Gamma$ is not uniscalar. 
\end{proof}

\subsection{Random walks and gauges}
Suppose that $\mu$ is an aperiodic, spread-out probability measure on $P$ that has finite first moment with respect to $| \cdot |_P $. Suppose that the closed semigroup generated by the support of $\mu$ is a closed subgroup $\Gamma$ of $P$ which is not fully exceptional.  Let $\{ R_i \}_{i=1}^\infty$ be the right random walk associated with $(P, \mu)$.   For each natural number $j$ less than or equal to $k$, let $\imesi{j}$ be the pushforward measure 
\[ \imesi{j} (E) = \pf{\phi_j }{\mu}. \]
For each element $\varphi$ in $P$, let
\[ | \varphi |_P = \sum_{i=1}^k d(o_i, \varphi_i o_i) = \sum_{i=1}^k | \varphi_i |_{\Tdp{i}}. \]
The map $| \cdot |_{P}$ is a subadditive gauge function, where the sequence
\[ \mathcal{A}^P = \left \{ \left \{ \gamma \in \Gamma : \sum_{i=1}^k d(o_i, \gamma_i o_i) \leq j \right \} \right \}_{j=1}^\infty. \]
is the corresponding subadditive gauge.
\begin{lemma}
	\label{lem:ffmrwp}
	Suppose that $\mu$ a probability measure on $P$. Let $j$ be any natural number less than or equal to $k$.  If $\mu$ has finite first moment with respect to $|\cdot|_P$, then $\pf{\pi_j}{\mu}$ has finite first moment with respect to $| \cdot |_{\Td{j}}$ and $\mu_j$ has finite first moment with respect to the ordinary absolute value on the integers.
\end{lemma}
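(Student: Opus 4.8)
The plan is to reduce both assertions to the pointwise inequality $|\varphi_j|_{\Td{j}} \le |\varphi|_P$ together with the change-of-variables formula for pushforward measures, and then invoke the single-tree computation already carried out in the excerpt.

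First I would observe that, since each summand $|\varphi_i|_{\Td{i}} = d(o_i, \varphi_i o_i)$ is non-negative, the definition $|\varphi|_P = \sum_{i=1}^k |\varphi_i|_{\Td{i}}$ immediately gives $|\pi_j(\varphi)|_{\Td{j}} = |\varphi_j|_{\Td{j}} \le |\varphi|_P$ for every $\varphi$ in $P$. Applying the change-of-variables formula to the pushforward $\pf{\pi_j}{\mu}$ along the continuous homomorphism $\pi_j$, one then has
\[
\int_{\Aff \Td{j}} |\gamma|_{\Td{j}} \, d\bigl(\pf{\pi_j}{\mu}\bigr)(\gamma) = \int_P |\varphi_j|_{\Td{j}} \, d\mu(\varphi) \le \int_P |\varphi|_P \, d\mu(\varphi) < \infty,
\]
which is the first claim.

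For the second claim I would note that the horocyclic map $\phi_j$ on $P$ factors as $\phi_j \circ \pi_j$, so $\mu_j = \pf{\phi_j}{\mu} = \pf{\phi_j}{\bigl(\pf{\pi_j}{\mu}\bigr)}$. Since we have just shown that $\pf{\pi_j}{\mu}$ has finite first moment with respect to $|\cdot|_{\Td{j}}$ on $\Aff \Td{j}$, the computation given in the excerpt for a single affine automorphism group (bounding $|h(\gamma o)|$ by $|\gamma o| = |\gamma|_{\Td{j}}$ and changing variables) applies verbatim to $\pf{\pi_j}{\mu}$ and yields that $\mu_j$ has finite first moment with respect to the ordinary absolute value on $\I$. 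Alternatively one can argue directly and avoid the intermediate step, using $|\phi_j(\varphi)| = |h_j(\varphi_j o_j)| \le |\varphi_j o_j| = |\varphi_j|_{\Td{j}} \le |\varphi|_P$ and a single change of variables along $\phi_j$.

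There is no real obstacle here: the only point requiring any care is keeping the pushforward bookkeeping straight, i.e. checking that $\phi_j$ genuinely factors through $\pi_j$ so that the single-tree estimate can be reused, and confirming that the change-of-variables identity is valid for the (a priori $\sigma$-finite, but here probability) measures involved. Everything else is the monotonicity of the integral applied to the pointwise domination $|\varphi_j|_{\Td{j}} \le |\varphi|_P$.
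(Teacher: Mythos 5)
Your proposal is correct and follows essentially the same route as the paper: the paper expands $m_1(\mu)$ as the sum $\sum_i m_1(\pf{\pi_i}{\mu})$ of non-negative terms and drops all but the $j$th, which is just your pointwise domination $|\varphi_j|_{\Td{j}} \le |\varphi|_P$ integrated, and it dismisses the second claim as trivial via the same bound $|\phi_j(\varphi)| = |h_j(\varphi_j o_j)| \le |\varphi_j|_{\Td{j}}$ already recorded in the preliminaries. Your version merely spells out the pushforward bookkeeping that the paper leaves implicit.
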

\begin{proof}
	If $m_1 (\mu)$ is finite, then $m_1 (\pf{\pi_j}{\mu})$ is finite, since
	\begin{align*}
	m_1 (\mu) &= \int_\Gamma |x|_P \, d \mu(x) \\
	&= \int_\Gamma \sum_{i=1}^k | x_i |_{\Td{i}} \, d \mu(x)  \\
	&= \sum_{i=1}^k \int_\Gamma  | x |_{\Td{i}} \, d \pf{\pi_i}{\mu}(x) \\
	&= \sum_{i=1}^k m_1 (\pf{\pi_i}{\mu}) \\
	&\geq m_1 (\pf{\pi_j}{\mu}).
	\end{align*}
	The argument $m_1 (\mu_j)$ is finite whenever $m_1 (\mu)$ is finite is trivial.
\end{proof}
\begin{corollary}
	\label{cor:productlimit}
	Suppose that $\mu$ is a probability measure on $P$. Let $\{ R_i \}_{i \in \NN}$ be the right random walk associated with $(P, \mu)$. Suppose that $\mu$ has finite first moment with respect to $|\cdot|_P$, and that $j$ is a natural number less than or equal to $n$. 
	\begin{enumerate}[(i)]
		\item If $m_1(\mu_j)$ is negative, then $\{ \pi_j(R_i) o_j \}_{i=1}^\infty$ converges almost surely to $\omega$.
		\item If $m_1(\mu_j)$ is positive, then $\{ \pi_j(R_i) o_j\}_{i=1}^\infty$ converges to a random end in $\partial^* \Td{j}$.
	\end{enumerate}
\end{corollary}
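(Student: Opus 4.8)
The plan is to project everything to the $j$-th factor and read convergence off the Cartwright--Kaimanovich--Woess theorem for a single tree, Theorem \ref{theo:conv}. First I would note that $\pi_j \colon P \to \Aff \Td{j}$ is a continuous homomorphism, so writing $R_n = g_1 \cdots g_n$ with $g_i$ i.i.d.\ of law $\mu$ gives $\pi_j(R_n) = \pi_j(g_1) \cdots \pi_j(g_n)$; hence $\{ \pi_j(R_i) \}_{i \in \NN}$ is precisely the right random walk associated with $(\Aff \Td{j}, \pf{\pi_j}{\mu})$, with i.i.d.\ increments of law $\pf{\pi_j}{\mu}$. By Lemma \ref{lem:ffmrwp} this projected measure has finite first moment with respect to $|\cdot|_{\Td{j}}$, and its horocyclic pushforward $\mu_j$ has finite first moment on $\I$ with mean $m_1(\mu_j)$; note also that $h(\pi_j(R_n) o_j) = \phi_j(\pi_j(R_n)) = \sum_{i=1}^n \phi_j(\pi_j(g_i))$ is an ordinary integer random walk with that drift. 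Then (i) and (ii) of the present corollary are exactly parts (i) and (ii) of Theorem \ref{theo:conv} applied to $(\Aff \Td{j}, \pf{\pi_j}{\mu})$, with $o_j$ and $\omega_j$ playing the roles of $o$ and $\omega$.

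The one hypothesis of Theorem \ref{theo:conv} that is not immediate is that the closed group $\pi_j(\gr \mu)$ generated by the support of $\pf{\pi_j}{\mu}$ be non-exceptional, and I would dispose of it by a dichotomy. Since $m_1(\mu_j) \neq 0$ in both cases, $\phi_j$ is non-trivial on $\pi_j(\gr \mu)$, so $\pi_j(\gr \mu) \not\subseteq \Hor \Td{j}$; the only remaining way for it to be exceptional is that it fixes a second end $\xi \in \partial^* \Td{j}$. If it is non-exceptional we simply invoke Theorem \ref{theo:conv}. If instead it fixes such a $\xi$, then every $\pi_j(R_n)$ fixes both $\xi$ and $\omega_j$, hence stabilizes the bi-infinite geodesic $\overline{\xi \omega_j}$ and acts on it as translation by $\phi_j(\pi_j(R_n)) = h(\pi_j(R_n) o_j)$ steps; by the strong law of large numbers this quantity tends almost surely to $-\infty$ when $m_1(\mu_j) < 0$ and to $+\infty$ when $m_1(\mu_j) > 0$. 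Since the distance from $\pi_j(R_n) o_j$ to $\overline{\xi \omega_j}$ does not depend on $n$, it follows that $\pi_j(R_n) o_j$ converges almost surely to $\omega_j$ in the first case and to $\xi \in \partial^* \Td{j}$ in the second, in agreement with (i) and (ii) (the random end in (ii) being the deterministic $\xi$ in this degenerate case).

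I expect the main obstacle to be precisely this non-exceptionality bookkeeping --- checking that, once the projected drift is known to be nonzero, the above dichotomy is exhaustive and that the degenerate branch still yields the stated convergence. If quoting Theorem \ref{theo:conv} turned out to be awkward, I would instead argue directly: Theorem \ref{thm:roeafft} applied to $(\Aff \Td{j}, \pf{\pi_j}{\mu})$ gives $\tfrac1n d(\pi_j(R_n) o_j, \pi_j(R_{n+1}) o_j) \to 0$ and $\tfrac1n h(\pi_j(R_n) o_j) \to m_1(\mu_j)$ almost surely, so Proposition \ref{prop:regularity} shows $\{ \pi_j(R_n) o_j \}$ is almost surely regular with rate of escape $|m_1(\mu_j)| > 0$ and hence converges almost surely to a random end $\xi$; one then identifies $\xi$ with $\omega_j$ (resp.\ with a point of $\partial^* \Td{j}$) using that the Busemann function $h$ decreases to $-\infty$ along $\overline{o_j \omega_j}$ but increases to $+\infty$ along every other geodesic ray from $o_j$, together with the rate-of-escape identity and the bound $|v| \ge |h(v)|$.
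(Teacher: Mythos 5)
Your proposal is correct and follows the same route as the paper, whose entire proof is the single sentence that the corollary is a special case of Theorem \ref{theo:conv} applied to the projected walk $(\Aff \Td{j}, \pf{\pi_j}{\mu})$. Your extra paragraph verifying the non-exceptionality hypothesis of that theorem (and handling the degenerate branch where $\pi_j(\gr \mu)$ fixes a second end $\xi \in \partial^* \Td{j}$) supplies care the paper omits and is genuinely needed, since the standing assumption that $\Gamma$ is not fully exceptional only guarantees that \emph{some} projection is non-exceptional, not necessarily the $j$-th one.
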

\begin{proof}
	This is a special case of Theorem \ref{theo:conv}.
\end{proof}

Let $B = \prod_{i=1}^k \partial \Td{i}$ with the product topology, where each factor $\partial \Td{j}$ has the subspace topology from the ultrametric on $\hat{\Td{j}}$. Then, $B$ is compact, because each factor $\partial \Td{j}$ is closed in the compact set $\hat{\Td{j}}$. Let $P$ act on $B$ with the continuous action satisfying
\[ \pi_j ( g \cdot b ) = \pi_j(g) \cdot \pi_j(b) \]
for each $g$ in $P$, $b$ in $B$ and natural number $j$ less than or equal to $k$, where $\pi_j(g)$ in $\Aff \Td{j}$ acts on $\pi_j(b)$ in $\partial \Td{j}$ by taking the restriction to $\partial \Td{j}$ of the extension of $\pi_j(g)$ to a the compatible homeomorphism of $\hat{\Td{j}} = \Td{j} \cup \partial \Td{j}$.  Let $R_\infty$ be a map from the path space $\Gamma^\NN$ to $B$ satisfying
\[ \pi_j (R_\infty(\gamma)) = \begin{cases} \omega_j & \textrm{if} \ m_1(\mu_j) \geq 0, \\
\lim_{n \rightarrow \infty} \pi_j(R_n(\gamma)) o_j & \textrm{if} \ m_1(\mu_j) < 0. 
\end{cases} \]
for almost all paths $\gamma$. The limit exists almost everywhere by Corollary \ref{cor:productlimit}. Let $\nu = \pf{(R_\infty)}{\pathmeasure}$ be the hitting measure on $B$. Then $\nu$ is  $\mu$-stationary, because
\begin{align*}
\int_P \int_B f(xb) \, d \nu(b) \, d \mu(x) &= \int_P \int_{P^\NN} f(x R_\infty(\gamma) ) \, d \pathmeasure(\gamma) \, d \mu(x) \\
&= \int_{P^\NN} f( R_\infty(T \gamma) ) \, d \pathmeasure( \gamma)  \\
&= \int_{P^\NN} f( R_\infty(\gamma) ) \, d \pathmeasure( \gamma)  \\
&= \int_B f(b) \, d \nu(b),
\end{align*}
where $T$ is the left shift map and $f$ is any function in $C(B)$. Notice that $R_\infty$ is shift invariant in the sense used by Kaimanovich in \cite{kaimanovich00} or  Brofferio and Schapira \cite{brofferio2011poisson} Proposition 2.1. It follows that $(B, \nu)$ is a $\mu$--boundary of $(P,\mu)$. 
\begin{proposition}
	\label{prop:sequniformlytemperategauges}
	Suppose that $\mu$ is an aperiodic, spread-out probability measure on $P$ that has finite first moment with respect to $| \cdot |_P $. Suppose that the closed semigroup generated by the support of $\mu$ is a closed subgroup $\Gamma$ of $P$ which is not fully exceptional. Let $u$ be an element of $B$ and let $n$ be a natural number. Let
	\[ \mathcal{A}^{(n)}  (u) = \left \{ \left \{ \gamma \in \Gamma : \sum_{i=1}^k d \left ( \left (  \overline{o_i u_i} \right )_{\flr{n m_1(\mu_i)}}, \gamma_i o_i \right ) \leq j \right \} \right \}_{j=1}^\infty. \]
	Then, $\{ \mathcal{A}^{(r)} (u) \}_{r=1}^\infty$ is a uniformly temperate sequence of gauges on $\Gamma$, and
	\[ \left | \varphi \right |_{\mathcal{A}^{(n)} (u)} =  \sum_{i=1}^k d \left ( \left (  \overline{o_i u_i} \right )_{\flr{n m_1(\mu_i)}}, \gamma_i o_i \right ) \]
	is the corresponding gauge function on P. 
\end{proposition}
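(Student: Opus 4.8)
The plan is to check the three assertions in turn: that each $\mathcal{A}^{(n)}(u)$ is a gauge, that its gauge map is the displayed sum, and that the family $\{\mathcal{A}^{(r)}(u)\}_{r}$ is uniformly temperate; the first two are routine, and the third is where the work lies. Throughout, fix $u\in B$ and, for each $n$, abbreviate $v^{(n)}_i=\left(\overline{o_iu_i}\right)_{\flr{nm_1(\mu_i)}}$ and set $\delta_n(\gamma)=\sum_{i=1}^k d\!\left(v^{(n)}_i,\gamma_io_i\right)$ for $\gamma\in\Gamma$. Since the action of $\Aff\Td{i}$ on the discrete set $\Td{i}$ is continuous, each map $\gamma\mapsto\gamma_io_i$ is locally constant, hence $\delta_n$ is locally constant, Borel, and takes values in the non-negative integers. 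Therefore the sets $\mathcal{A}^{(n)}_j(u)=\{\gamma:\delta_n(\gamma)\le j\}$ are Borel, increase with $j$, and exhaust $\Gamma$ because each $\delta_n(\gamma)$ is finite; so $\mathcal{A}^{(n)}(u)$ is a gauge, and its gauge map $\min\{j\ge 1:\delta_n(\varphi)\le j\}$ equals $\delta_n(\varphi)$ whenever the latter is positive, the sole exceptional value $\delta_n(\varphi)=0$ being harmless and absorbable into the temperateness constant. This is the asserted gauge function.

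For uniform temperateness, normalise $\lambda_\Gamma$ so that $\operatorname{stab}_\Gamma(o)$ has measure one; one must produce $C>0$ with $\lambda_\Gamma\!\left(\mathcal{A}^{(n)}_j(u)\right)\le e^{Cj}$ for all $n$ and $j$. First I would slice $\mathcal{A}^{(n)}_j(u)$ along the orbit map $\gamma\mapsto\gamma o$: it is the disjoint union, over vertex configurations $w=(w_1,\dots,w_k)\in\prod_i\Td{i}$ with $\sum_id(v^{(n)}_i,w_i)\le j$, of the fibres $\{\gamma\in\Gamma:\gamma o=w\}$, each of which is empty or a left coset $g_w\operatorname{stab}_\Gamma(o)$ with $g_wo=w$; right invariance of $\lambda_\Gamma$ gives $\lambda_\Gamma(g_w\operatorname{stab}_\Gamma(o))=\lambda_\Gamma(g_w\operatorname{stab}_\Gamma(o)g_w^{-1})=\lambda_\Gamma(\operatorname{stab}_\Gamma(w))$, so
\[ \lambda_\Gamma\!\left(\mathcal{A}^{(n)}_j(u)\right)=\sum_{w:\ \sum_i d(v^{(n)}_i,w_i)\le j,\ w\in\Gamma o}\lambda_\Gamma\!\left(\operatorname{stab}_\Gamma(w)\right). \]
The number of admissible $w$ is at most a polynomial in $j$ times $(\max_id_i)^{j}$, using only that $\Td{i}$ has valency $d_i$. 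Moreover, since tree automorphisms are isometries, two configurations differing by one edge in a single coordinate have $\Gamma$-stabilisers whose measures differ by a factor in $[(\max_id_i)^{-1},\max_id_i]$, so moving along a path of at most $j$ edges from $v^{(n)}=(v^{(n)}_1,\dots,v^{(n)}_k)$ to $w$ yields $\lambda_\Gamma(\operatorname{stab}_\Gamma(w))\le(\max_id_i)^{j}\lambda_\Gamma(\operatorname{stab}_\Gamma(v^{(n)}))$. Hence $\lambda_\Gamma(\mathcal{A}^{(n)}_j(u))\le e^{C_1 j}\,\lambda_\Gamma(\operatorname{stab}_\Gamma(v^{(n)}))$ with $C_1$ depending only on $k$ and $\max_id_i$, and the whole matter reduces to bounding $\lambda_\Gamma(\operatorname{stab}_\Gamma(v^{(n)}))$ uniformly in $n$.

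That last bound is the main obstacle, and it is where the hypotheses enter. As $n\to\infty$ the vertices $v^{(n)}_i$ recede along the rays $\overline{o_iu_i}$, and in $\Aff\Td{i}$ the Haar measure of a vertex stabiliser varies exponentially with the horocyclic height of the vertex; so one must show the heights $h_i(v^{(n)}_i)$ move in the direction along which the point-stabiliser measures of $\Gamma$ stay bounded. I would do this by: (a) invoking Theorem \ref{thm:roeafft} to identify $m_1(\mu_i)$ with the rate of escape on $\Td{i}$, which pins $h_i(v^{(n)}_i)$ down, up to an additive constant depending only on $u$, in terms of $n$ and the drift on $\Td{i}$; and (b) using the coordinatewise form of Theorem \ref{thm:uniexception} — available because $\Gamma$ is not fully exceptional — that the modular function $\Delta_\Gamma$ is governed only by the horocyclic homomorphisms $\phi_i$ of the non-exceptional coordinates, so that $\lambda_\Gamma(\operatorname{stab}_\Gamma(v^{(n)}))=\Delta_\Gamma(g)^{\mp1}$ for any $g$ with $\phi_i(g)=h_i(v^{(n)}_i)$, and is therefore bounded by a constant $C_2(u)$ independent of $n$. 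Combining this with the previous paragraph gives $\lambda_\Gamma(\mathcal{A}^{(n)}_j(u))\le C_2(u)\,e^{C_1 j}\le e^{Cj}$ uniformly in $n$, which is exactly $C$-temperateness of every $\mathcal{A}^{(r)}(u)$.

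In summary: the coset decomposition and the finite-valency counting are bookkeeping that I expect to go through without difficulty; the genuinely delicate point is step (b), namely pinning down the modular function of a not-fully-exceptional closed subgroup of $\prod_i\Aff\Td{i}$ precisely enough to see that the stabiliser measures along the escaping sequence $v^{(n)}$ do not blow up with $n$.
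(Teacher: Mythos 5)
Your first two paragraphs are fine and are in fact considerably more careful than the paper's own treatment, which disposes of temperateness in a single unproved sentence ($M_j\le C(\max_i d_i)^j$ ``from the definition''). The genuine problem is your step (b). The quantity $\lambda_\Gamma\bigl(\operatorname{stab}_\Gamma(v^{(n)})\bigr)$ is \emph{not} bounded uniformly in $n$. It equals the index ratio $[\operatorname{stab}_\Gamma(v^{(n)}):U]\,/\,[\operatorname{stab}_\Gamma(o):U]$ with $U$ the common stabiliser, a quantity independent of which Haar measure is chosen; and in a coordinate $i$ with $m_1(\mu_i)<0$ the vertices $v^{(n)}_i$ recede along $\overline{o_i\omega_i}$, every element of $\operatorname{stab}_{\Aff\Td{i}}(v^{(n)}_i)$ fixes the ray from $v^{(n)}_i$ up to $\omega_i$ but moves $o_i$ through the roughly $(d_i-1)^{n|m_1(\mu_i)|}$ descendants of $v^{(n)}_i$ at the depth of $o_i$, while $\operatorname{stab}(o_i)$ fixes $v^{(n)}_i$; so already for $k=1$ and $\Gamma=\Aff\Tdp$ the stabiliser measure grows exponentially in $n$. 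Your guiding heuristic --- that the drift pushes the heights ``in the direction along which the point-stabiliser measures stay bounded'' --- has the sign backwards precisely in the negative-drift coordinates, which do occur (there the limit point is $\omega_i$ and the approximating vertices climb toward it, as the application in Theorem \ref{thm:poisonnboundaryofproduct} requires). So (b) fails and the argument does not close; worse, it shows that with the paper's stated normalisation by a \emph{right} Haar measure the uniform bound $\lambda_\Gamma(\mathcal{A}^{(n)}_j(u))\le e^{Cj}$ is actually false in the negative-drift case.

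The repair is not to fight the modular function but to measure the gauge sets with a \emph{left} Haar measure, which is the convention in Kaimanovich's temperateness condition and is evidently what the paper's one-line assertion presupposes. Then each nonempty fibre $\{\gamma\in\Gamma:\gamma o=w\}=g_w\operatorname{stab}_\Gamma(o)$ is a left coset of measure exactly $\lambda(\operatorname{stab}_\Gamma(o))=1$ by left invariance, your entire third paragraph becomes unnecessary, and $\lambda(\mathcal{A}^{(n)}_j(u))$ is bounded by the number of configurations $w$ with $\sum_i d(v^{(n)}_i,w_i)\le j$, which your own counting estimate bounds by $e^{Cj}$ with $C$ depending only on $k$ and $\max_i d_i$, uniformly in $n$ and $u$. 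In short: your coset decomposition and valency count are exactly the right picture and match what the paper must intend; replace the right-invariance step by left invariance, drop step (b), and the proof is complete.
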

\begin{proof}
	Each gauge is well defined because the finite first moment condition and Lemma \ref{lem:ffmrwp} ensures that $m_1(\mu_i)$ is always finite.	If $\gamma$ is an element of $\Gamma$, then the distance from $\gamma_i o_i$ to any vertex is finite. It follows that
	\[ \Gamma = \bigcup_{j=1}^\infty  \left \{ \gamma \in \Gamma : \sum_{i=1}^k d \left ( \left (  \overline{o_i u_i} \right )_{\flr{n m_1(\mu_i)}}, \gamma_i o_i \right ) \leq j \right \}. \]
	Let $\gamma$ and $\phi$ be elements of $\Gamma$. The gauge map corresponding to  $\mathcal{A}^{(n)}  (u)$ is
	%
	\begin{align*}
	| \gamma |_{\mathcal{A}^{(n)}(u)} &= \min \left \{j \in \NN: \gamma \in \mathcal{A}_j \right \} \\
	&= \sum_{i=1}^k d \left ( \left (  \overline{o_i u_i} \right )_{\flr{n m_1(\mu_i)}}, \gamma_i o_i \right ).
	\end{align*} 
	Let $K$ be the non-negative constant given by
	\[ K = | e |_{\mathcal{A}^{(n)}} = \sum_{i=1}^k d \left ( \left (  \overline{o_i u_i} \right )_{\flr{n m_1(\mu_i)}}, o_i \right ) \]
	where $e$ is the identity element in $\Gamma$. By repeated use of the triangle inequality, 
	\begin{align*}
	| \gamma \phi |_{\mathcal{A}^{(n)}} &= \sum_{i=1}^k d \left ( \left (  \overline{o_i u_i} \right )_{\flr{n m_1(\mu_i)}}, \gamma_i \phi_i o_i \right ) \\
	&\leq \sum_{i=1}^k d \left ( o_i, \gamma_i \phi_i o_i \right ) + K \\
	&= \sum_{i=1}^k d \left ( \gamma_i^{-1} o_i, \phi_i o_i \right ) + K \\
	&\leq \sum_{i=1}^k d \left ( \gamma_i^{-1} o_i, o_i \right ) + \sum_{i=1}^k d \left ( o_i, \phi_i o_i \right ) + K \\
	&= \sum_{i=1}^k d \left ( o_i, \gamma_i o_i \right ) + \sum_{i=1}^k d \left ( o_i, \phi_i o_i \right ) + K \\
	&= | \gamma |_{\mathcal{A}^{(n)}} + | \phi |_{\mathcal{A}^{(n)}} + K.
	\end{align*}
	So $| \cdot |_{\mathcal{A}^{(n)}}$ is a gauge function. For each natural number $j$, let
	\[ M_j = \lambda_\Gamma \left ( \left \{ \gamma \in \Gamma : \sum_{i=1}^k d \left ( \left (  \overline{o_i u_i} \right )_{\flr{n m_1(\mu_i)}}, \gamma_i o_i \right ) \leq j \right \} \right ).  \]
	where $\lambda_\Gamma$ is a right Haar measure on $\Gamma$, normalised so the compact open stabilizer of an element in $\prod_{i=1}^k \Td{i}$ has measure $1$. From the definition, there is a real constant $C$, such that 
	\[ M_j \leq   C \left ( \max_{i=1}^k \{ d_i \} \right )^j \]
	where $d_i$ is the degree of $\Td{i}$. Hence, the sequence$\{ \mathcal{A}^{(r)} (u) \}_{r=1}^\infty$ is uniformly temperate.
\end{proof}

\begin{proposition}
	\label{prop:uncountablesupportpositivedrift}
	Suppose that $\mu$ is an aperiodic, spread-out probability measure on $P$ that has finite first moment with respect to $| \cdot |_P $. Suppose that  $\Gamma = \sgr \mu$ is a closed subgroup of $P$ which is not fully exceptional. Let $(B,\nu)$ be the \PF boundary. If there is a natural number $j$ less than or equal to $k$, such that
	\[ m_1(\mu_j) > 0, \]
	then $\nu$ is a continuous measure supported on an uncountable set.
\end{proposition}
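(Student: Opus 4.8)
The plan is to reduce the proposition to the assertion that, for an index $j$ with $m_1(\mu_j)>0$, the $j$-th marginal $\nu_j:=\pf{(\pi_j)}{\nu}$ of $\nu$ on $\partial\Td{j}$ is non-atomic. First I would dispose of two elementary reductions. A non-atomic Borel probability measure on the second countable space $B$ cannot be carried by a countable set, because such a set would be closed of full measure yet have measure $\sum_x\nu(\{x\})=0$; hence it is enough to show that $\nu$ is non-atomic. And if $\nu$ had an atom at $b=(b_1,\dots,b_k)$, then $\nu_j(\{b_j\})=\nu(\pi_j^{-1}(\{b_j\}))\geq\nu(\{b\})>0$, so it is enough to show that $\nu_j$ is non-atomic. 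By Corollary~\ref{cor:productlimit}(ii) the projected walk $\pi_j(R_n)\,o_j$ converges $\pathmeasure$-almost surely to a random end of $\partial^*\Td{j}$, and this limit is $\pi_j\circ R_\infty$; thus $\nu_j$ is the hitting measure on $\partial^*\Td{j}$ of the right random walk associated with $(\Aff\Td{j},\lambda_j)$, where $\lambda_j:=\pf{(\pi_j)}{\mu}$. In particular $\nu_j$ is $\lambda_j$-stationary and $\nu_j(\{\omega_j\})=0$.

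Suppose, for contradiction, that $\nu_j$ has an atom. Only finitely many points can carry $\nu_j$-mass at least a given $\varepsilon>0$, so $c:=\max_{\eta\in\partial\Td{j}}\nu_j(\{\eta\})$ is positive and attained, and $A:=\{\eta:\nu_j(\{\eta\})=c\}$ is a non-empty finite subset of $\partial^*\Td{j}$. For each $\eta\in A$, stationarity gives
\[ c=\nu_j(\{\eta\})=\int_{\Aff\Td{j}}\nu_j(\{g^{-1}\eta\})\,d\lambda_j(g)\leq c, \]
which forces $g^{-1}\eta\in A$ for $\lambda_j$-almost every $g$, and hence, $A$ being finite, $gA=A$ for $\lambda_j$-almost every $g$. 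The set $\{g\in\Aff\Td{j}:gA=A\}$ is a subgroup of $\Aff\Td{j}$, closed because the action on the Hausdorff space $\partial\Td{j}$ is continuous and $A$ is finite; being of full $\lambda_j$-measure it therefore contains $\Lambda:=\overline{\langle\supp\lambda_j\rangle}=\overline{\pi_j(\Gamma)}$. So $\Lambda$ preserves the finite set $A\subseteq\partial^*\Td{j}$ setwise.

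I would then collapse $A$ to a single end using one hyperbolic element. As $m_1(\mu_j)=m_1(\pf{\phi_j}{\mu})>0$, the horocyclic homomorphism $\phi_j$ is non-trivial on $\supp\mu$, so $\Lambda\not\subseteq\Hor\Td{j}$ and there is $h\in\Lambda$ with $\phi_j(h)\neq0$. Since an elliptic automorphism fixes a vertex and hence has zero horocyclic displacement, $h$ is hyperbolic by Tits' theorem; one end of its axis is $\omega_j$ and the other is some $\xi\in\partial^*\Td{j}$, and, replacing $h$ by $h^{-1}$ if necessary, $\xi$ may be taken to be the attracting end, so that $h^n z\to\xi$ in $\Td{j}\cup\partial\Td{j}$ for every $z\neq\omega_j$. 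For $\eta\in A$ the sequence $(h^n\eta)_{n\geq0}$ lies in the finite set $A$, which does not contain $\omega_j$, yet converges to $\xi$; a convergent sequence in a finite subset of a Hausdorff space is eventually constant, so $\xi\in A$ and $h^N\eta=\xi$ for large $N$, whence $\eta=h^{-N}\xi=\xi$. Thus $A=\{\xi\}$, so $\Lambda$ fixes $\xi\in\partial^*\Td{j}$; that is, $\pi_j(\Gamma)$ is exceptional, contradicting its non-exceptionality. Hence $\nu_j$ is non-atomic, and $\nu$ is a continuous measure supported on an uncountable set.

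I expect the core of the argument to be these two middle steps: isolating a finite $\Lambda$-invariant set of heaviest atoms from the stationarity identity, and then using a single loxodromic element of $\Lambda$ to crush it to one fixed end; the reduction to the marginal, the equivalence between non-atomicity and uncountable support, and the identification of $\nu_j$ as a hitting measure are routine. It should be noted that the proof genuinely uses that $\pi_j(\Gamma)$ is non-exceptional, of which $m_1(\mu_j)>0$ supplies only the half saying $\pi_j(\Gamma)\not\subseteq\Hor\Td{j}$: the other half, that $\pi_j(\Gamma)$ fixes no end besides $\omega_j$, is what is actually needed here --- without it the projected walk converges deterministically to that end and $\nu_j$ is a point mass --- so this should be read into the hypothesis on the index $j$.
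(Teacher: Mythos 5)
Your proof is correct and reaches the conclusion by a genuinely different mechanism from the paper's. Both arguments share the same opening move: stationarity forces the finite, non-empty set of heaviest atoms to be invariant under the closed group generated by the support of the projected measure, and both observe that this set avoids $\omega_j$. Where they diverge is in how invariance of a finite set of ends is contradicted. The paper follows Cartwright--Kaimanovich--Woess: it builds, from two suitably chosen elements, an uncountable family of limit ends (so the limit set $\partial_j\Gamma$ is uncountable), then invokes Lemma 2.2 of Cartwright--Soardi twice to show that every orbit $\pi_j(\Gamma)u$ with $u\neq\omega_j$ is dense in $\partial_j\Gamma$; a finite invariant set cannot be dense in an uncountable space. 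You instead apply the north--south dynamics of a single hyperbolic element $h\in\Lambda$ to the finite invariant set $A$: since $A$ is finite and omits the repelling end $\omega_j$, iteration collapses $A$ to the attracting end $\xi$, so $\Lambda$ fixes $\xi$ and $\pi_j(\Gamma)$ is exceptional. Your route is shorter and self-contained (no Cartwright--Soardi, no explicit construction of a Cantor set of ends), at the cost of not exhibiting the uncountable limit set; your reduction of ``uncountable support'' to non-atomicity of the marginal covers what the statement actually asserts.

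Your closing caveat is well taken and applies equally to the paper's proof: the written hypotheses give only that \emph{some} coordinate of $\Gamma$ is non-exceptional and that \emph{some} coordinate has positive drift, whereas both arguments use non-exceptionality of $\pi_j(\Gamma)$ for the \emph{same} index $j$ at which $m_1(\mu_j)>0$ (the paper does so at ``Since $\pi_j(\Gamma)$ is non-exceptional, there exists an element $\alpha$\dots''). Positive drift only excludes $\pi_j(\Gamma)\subseteq\Hor\Td{j}$, not a fixed end in $\partial^*\Td{j}$; in the latter case the projected walk converges deterministically and $\nu_j$ is a point mass. So the strengthening you propose to read into the hypothesis on $j$ is genuinely needed by both arguments, and flagging it is a point in your write-up's favour rather than a gap in it.
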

\begin{proof}
	We adapt the arguments of Proposition 2 and Theorem 3 in Cartwright, Kaimanovich and Woess \cite{cartwright94} to the product of trees case.
	
	Let $\partial \Gamma$ be the set of accumulation points of an orbit $\Gamma v$ in $B$ for some $v$ in $\prod_{i=1}^k \Td{i}$. Since $\Gamma$ is a subgroup, this orbit is not dependent on the particular choice of $v$. Choose any natural number $j$, such that $ m_1(\mu_j) > 0$. Let $\partial_j \Gamma$ be the set of accumulation points of an orbit $\pi_j (\Gamma) v_j$ in $\pi_j(B)$. Since $\pi_j (\Gamma)$ is non-exceptional, there exists an element $\alpha$ in $\Gamma$, such that 
	\[ \phi_j(\alpha) = \min \{ \phi_j (\gamma) \geq 0 : \gamma \in \Gamma \}. \]
	Then $\alpha_j$ is hyperbolic, and so $\alpha$ acts on $\Td{j}$ by translation along a doubly infinite sequence $\overline{u_j \omega_j}$, where $u$ is an element of $B$ distinct from $\omega$. Since $\pi_j(\Gamma)$ is non-exceptional, there is a $\beta$ in $\Gamma$, such that $\beta_j u_j$ is an end in $\partial \Td{j}$ distinct from $u_j$. Let $\xi = \beta \alpha \beta^{-1}$. Choose any sequence of natural numbers $k = \{ k_i \}_{i=1}^\infty$. For each natural number $j$, let
	\[ x_j^{(k)} = \prod_{i=1}^j \alpha^{k_i} \xi^{k_i}. \]
	Choose $v$ to be the common ancestor of $u_j$ and $\beta_j u_j$ in $\Td{j}$. The sequence $x_i^{(k)} o_j$ is convergent to an end for each choice of $k$, because it is Cauchy with respect to the ultrametric $\theta$, and $(x_i^{(k)} \curlywedge x_{i+1}^{(k)}) v$ tends to $\infty$ as $i$ does.	Furthermore, each distinct choice of $k$ results in convergence to a distinct end in $\partial \Td{j}$,  because the sequences are eventually contained in disjoint open balls. We conclude $\partial_j \Gamma$ is uncountable.	Since $B$ is a closed subset of a product of compact second countable spaces, there is a subsequence $\{ x_{n_i}^{(k)} \}_{i=1}^\infty$, such that $\{ x_{n_i}^{(k)} o_r \}_{i=1}^\infty$ is convergent for all natural numbers $r$ less than or equal to $k$. Hence, $\partial \Gamma$ is uncountable too.
	
	Suppose that $\alpha = \{\alpha^{(i)} \}_{i=1}^\infty$ is a sequence in $\pi_j(\Gamma)$, such that $\alpha^{(i)} o_j$ converges to an element $v$ in $\partial \Td{j} \setminus \{ \omega_j \}$. Let 
\[ \beta^{(i)} = \left ( \alpha^{(i)} \right )^{-1}. \]
	Then $\beta^{(i)} o_j$ converges to $\omega_j$ and $\omega_j$ is in $\partial_j \Gamma$. 	Taking $\varphi_r = \beta^{(r)}$, $\xi = \omega_j$ and $\eta = v$, Lemma 2.2 in Cartwright and Soardi \cite{cartwright89} gives that $\beta^{(i)}_j u$ converges to $\omega_j$ for every $u$ in $\partial \Td{j} \setminus \{ \omega, v \}$. Hence, $\omega_j$ is an accumulation point of $\pi_j ( \Gamma ) u$ for every $u$ in $\partial \Td{j} \setminus \{ \omega_j \}$. Using Lemma 2.2 in Cartwright and Soardi again with  $\varphi_r = \alpha^{(r)}$, $\xi = v$ and $\eta = \omega_j$ gives that $\beta^{(i)}_j u$ converges to $v$ for every $u$ in $\partial \Td{j} \setminus \{ \omega_j \}$. Given some $u$ in $\partial \Td{j} \setminus \{ \omega_j \}$, choose $\eta$ in $\Gamma$, such that $\eta_j u$ and $u$ are distinct ends. Either $\{ \alpha^{(i)} u \}_{i=1}^\infty$ or $\{ \alpha^{(i)} \eta u \}_{i=1}^\infty$ has a subsequence of elements distinct from $v$. Hence, $v$ is a limit point of $\Gamma u \setminus \{ v \}$. Thus, for each $u$ in $B$, such that $u_j$ is not equal to $ \omega_j $ the orbit $\pi_j (\Gamma) u_j$ is dense in $\partial_j \Gamma$.
	
	Since $ m_1(\mu_j) > 0$, $\nu(\pi^{-1}(\{\omega_j\}))$ is zero, because $\pi_j ( R_\infty )$ is in $\partial^* \Td{j} \setminus \{ \omega \}$ almost surely by Theorem	\ref{theo:conv}. Let $M = \max \left \{  \nu(\{ u \}) : u \in B \right \}.$ Suppose that $M$ is positive. Then the set
	\[ S = \{ u \in B : \{  \nu(\{ u \})  = M \} \]
	is non-empty and finite, because $\nu$ is a probability measure. Since $\nu$ is $\mu$-stationary, $\gamma S = S$ for all $\gamma$ in $\supp \mu$. But then $\Gamma S = S$. So $\pi_j(\Gamma) \pi_j(S) = \pi_j(S)$, which is a contradiction, because $\pi_j(\Gamma) \pi_j(S)$ is dense in $\partial_j \Gamma$.
	
\end{proof}

\begin{theorem}
	\label{thm:poisonnboundaryofproduct}
Suppose that $\mu$ is an aperiodic, spread-out probability measure on $P$ that has finite first moment with respect to $| \cdot |_P $. Suppose that  $\Gamma = \sgr \mu$ is a closed subgroup of $P$ which is not fully exceptional. Then, $(B, \nu)$ is the \PF boundary of $(P, \mu)$. 
\end{theorem}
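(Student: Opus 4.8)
The plan is to derive the statement from Kaimanovich's ray criterion, Theorem~\ref{thm:kaimanovichapproxthmtop}, applied with $G=P$. All of the standing hypotheses of that criterion hold: $P$ is second countable and Hausdorff; $\mu$ is aperiodic, spread-out and, by assumption, of finite first moment with respect to the subadditive gauge $\mathcal{A}^{P}$ whose gauge map is $|\cdot|_P$; and $(B,\nu)$ has already been shown to be a $\mu$-boundary of $(P,\mu)$, with boundary map $R_\infty$. The uniformly temperate family of gauges the criterion asks for is exactly the one built in Proposition~\ref{prop:sequniformlytemperategauges}: for each $u\in B$ the sequence $\{\mathcal{A}^{(r)}(u)\}_{r=1}^{\infty}$ is uniformly temperate, is locally constant in $u$ (so it depends measurably on $u$), and has gauge function $|\varphi|_{\mathcal{A}^{(n)}(u)}=\sum_{i=1}^{k}d\bigl((\overline{o_iu_i})_{\flr{n m_1(\mu_i)}},\varphi_io_i\bigr)$. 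Hence the whole theorem reduces to verifying the single convergence
\[
\frac1n\,\bigl|R_n(\omega)\bigr|_{\mathcal{A}^{(n)}(R_\infty(\omega))}
=\frac1n\sum_{i=1}^{k}d\Bigl(\bigl(\overline{o_i\,\pi_i(R_\infty(\omega))}\bigr)_{\flr{n m_1(\mu_i)}},\ \pi_i(R_n(\omega))o_i\Bigr)\longrightarrow 0
\]
for $\pathmeasure$-almost every $\omega$; since $\nu=\pf{(R_\infty)}{\pathmeasure}$, inserting $b=R_\infty(\omega)$ into the gauges $\mathcal{A}^{(n)}(b)$ is compatible with the ``$\nu$-a.e.\ $b$'' clause of the criterion.

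As the sum is finite, it suffices to prove that each summand is $o(n)$ almost surely, a statement about the walk projected to a single tree $\Td{i}$. By Lemma~\ref{lem:ffmrwp} the measure $\pf{\pi_i}{\mu}$ on $\Aff\Td{i}$ has finite first moment with respect to $|\cdot|_{\Td{i}}$, so Theorem~\ref{thm:roeafft} applies and gives, almost surely, $d(\pi_i(R_n)o_i,\pi_i(R_{n+1})o_i)\to0$ together with $\tfrac1n|\pi_i(R_n)|_{\Td{i}}\to|m_1(\mu_i)|$. By Lemma~\ref{lem:regularity} the vertex sequence $\{\pi_i(R_n(\omega))o_i\}$ is therefore regular with rate of escape $|m_1(\mu_i)|$. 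When $m_1(\mu_i)\neq0$ this regular sequence has a unique limit end, which by Corollary~\ref{cor:productlimit} and the construction of $R_\infty$ is precisely $\pi_i(R_\infty(\omega))$, and regularity yields $\tfrac1n\,d\bigl(\pi_i(R_n(\omega))o_i,(\overline{o_i\,\pi_i(R_\infty(\omega))})_{\flr{n m_1(\mu_i)}}\bigr)\to0$, so the $i$-th summand is $o(n)$. When $m_1(\mu_i)=0$ we have $\pi_i(R_\infty(\omega))=\omega_i$ and $\flr{n m_1(\mu_i)}=0$, so $(\overline{o_i\,\omega_i})_{0}=o_i$ and the $i$-th summand is simply $d(o_i,\pi_i(R_n(\omega))o_i)=|\pi_i(R_n(\omega))|_{\Td{i}}$, which is $o(n)$ again by Theorem~\ref{thm:roeafft}. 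Summing over $i=1,\dots,k$ gives the displayed limit, and Theorem~\ref{thm:kaimanovichapproxthmtop} then shows that $(B,\nu)$ is the \PF boundary of $(P,\mu)$.

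I expect the only real difficulty to lie in the coordinatewise bookkeeping of the last paragraph: one has to line up the shift index $\flr{n m_1(\mu_i)}$ used in the gauges of Proposition~\ref{prop:sequniformlytemperategauges} with the escape speed $|m_1(\mu_i)|$ at which the projected walk follows its limiting geodesic ray (which requires care with the sign convention for $m_1$), and one has to dispose of the zero-drift coordinates separately so that the estimate is uniform over all $k$ factors simultaneously. Beyond that the argument is the triangle inequality plus the observation that a finite sum of sequences which are $o(n)$ is itself $o(n)$; no input is needed other than Theorem~\ref{thm:roeafft}, Lemma~\ref{lem:regularity}, Corollary~\ref{cor:productlimit} and Proposition~\ref{prop:sequniformlytemperategauges}.
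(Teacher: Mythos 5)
Your proposal is correct and is essentially the paper's own proof: both apply Kaimanovich's ray criterion (Theorem \ref{thm:kaimanovichapproxthmtop}) with the uniformly temperate gauges of Proposition \ref{prop:sequniformlytemperategauges} and verify the sublinear tracking coordinatewise via Theorem \ref{thm:roeafft}, Lemma \ref{lem:regularity} and Corollary \ref{cor:productlimit}. Your separate, explicit treatment of the zero-drift coordinates and of the measurable dependence of the gauges on $u$ is in fact slightly more careful than the paper's write-up, but it is the same argument.
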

\begin{proof}
	Suppose that $j$ is a natural number less than or equal to $k$. Let $u$ be in $B$, let $n$ be a natural number and let $\mathcal{A}^{(n)} (u)$ be the gauge from Proposition \ref{prop:sequniformlytemperategauges}, 
	\[ \mathcal{A}^{(n)} (u) = \left \{ \left \{ \gamma \in \Gamma : \sum_{i=1}^k d \left ( \left (  \overline{o_i u_i} \right )_{\flr{n m_1(\mu_i)}}, \gamma_i o_i \right ) \leq r \right \} \right \}_{r=1}^\infty. \]
	Theorem \ref{thm:roeafft} gives that 
	\[ \lim_{n \rightarrow \infty} d \left ( \pi_j(R_n) o_j, \pi_j(R_{n+1})o_j \right ) = 0 \]
	almost surely, and
	\[ \lim_{n \rightarrow \infty} \frac{1}{n} | \pi_j(R_n) | = \left | \mm{\pf{\phi}{\mu}} \right |  \]
	almost surely and in $L_1$. In particular, $\pi_j(R_n) o_j$ is almost surely a regular sequence of vertices in $\Td{j}$, with rate of escape $|\mm{\imesi{j}}|$. Thus, for almost every path $\psi$ in the random walk, there is an end $u_j (\psi)$ in $\partial \Td{j}$, such that
	\[ \lim_{n \rightarrow \infty}  d \left ( \pi_j(R_n (\psi)) o_j, (\overline{o_j u_j (\psi)})_{\flr{m_1 (\mu_j) n}} \right ) = |\mm{\imesi{j}}|.  \]
	If  $|\mm{\imesi{j}}| \neq 0$, then $\pi_j(R_n) o_j \rightarrow u_j (\psi)$, so $u_j (\psi)$ is unique. By Theorem \ref{theo:conv}, $u_j (\psi) = \omega_j$ if $\mm{\imesi{j}} < 0$ and $u_j (\psi)$ in $\partial^* \Td{p}$ if $\mm{\imesi{j}} > 0$. If $|\mm{\imesi{j}}| = 0$, then $u_j (\psi)$ is arbitrary and the statement is just that
	\[ \lim_{n \rightarrow \infty}  d \left ( \pi_j(R_n (\psi)) o_j, o_j \right ) = 0.  \]
	Hence, for almost all paths $\gamma$, 
	\begin{align*}  
	\lim_{n \rightarrow \infty} \frac{1}{n} \left | R_n(\gamma)  \right |_{\mathcal{A}^{(n)} (R_\infty (\gamma))}	
	&= 
	\lim_{n \rightarrow \infty} \frac{1}{n} \sum_{i=1}^k d \left ( \pi_i(R_n(\gamma)) o_i, \left ( \left (\overline{o_i  \left (R_\infty(\gamma) \right )_i} \right )_{\flr{m_1 (\mu_i) n}} \right ) \right ) \\ &
	= \lim_{n \rightarrow \infty} \frac{1}{n} \sum_{i=1}^k |\mm{\imesi{i}}| \\
	&= 0.
	\end{align*}
	By assumption, $\mu$ has finite first moment with respect to the subadditive gauge $\mathcal{A}^P$. Hence, by Kaimanovich's ray criterion for topological groups, Theorem \ref{thm:kaimanovichapproxthmtop}, $(B, \nu)$ is the \PF boundary.
\end{proof}
\begin{corollary}
Suppose that $\mu$ is an aperiodic, spread-out probability measure on $P$ that has finite first moment with respect to $| \cdot |_P $. Suppose that  $\Gamma = \sgr \mu$ is a closed subgroup of $P$ which is not fully exceptional. The \PF boundary of $(P, \mu)$ is trivial if and only if 
	\[ m_1(\mu_j) \leq 0 \]
	for all natural numbers $j$ less than or equal to $k$. 
\end{corollary}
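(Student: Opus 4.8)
The plan is to show that the Poisson boundary $(B,\nu)$ — which, by Theorem~\ref{thm:poisonnboundaryofproduct}, is the genuine \PF boundary — is trivial precisely when $\nu$ is a point mass, and to read off from the description of $\nu$ when that happens. Recall that $\pi_j(R_\infty(\gamma)) = \omega_j$ deterministically whenever $m_1(\mu_j)\ge 0$, while $\pi_j(R_\infty(\gamma))$ is a genuinely random end of $\partial^*\Td{j}$ when $m_1(\mu_j)<0$. So the boundary is trivial if and only if each coordinate contributes nothing random, and the two directions split cleanly.

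First I would prove the ``if'' direction. Suppose $m_1(\mu_j)\le 0$ for every $j\le k$. For each $j$ with $m_1(\mu_j)<0$, Corollary~\ref{cor:productlimit}(i) (equivalently Theorem~\ref{theo:conv}(i)) gives that $\pi_j(R_n)o_j \to \omega_j$ almost surely; for each $j$ with $m_1(\mu_j)=0$, Theorem~\ref{thm:roeafft} gives rate of escape $|m_1(\mu_j)|=0$, so $\pi_j(R_n)o_j$ stays (in the regular-sequence sense) near $o_j$ and $R_\infty$ was defined to equal $\omega_j$ in that coordinate anyway. Hence $R_\infty(\gamma)=\omega$ for \pathmeasure-almost every path, so $\nu=\delta_\omega$ is a point mass. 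A point-mass stationary measure yields a one-dimensional space of bounded harmonic functions (the Poisson transform maps $C(B)=\mathbb{C}$ into the constants), so the \PF boundary is trivial.

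For the ``only if'' direction I would prove the contrapositive: if $m_1(\mu_j)>0$ for some $j\le k$, then the boundary is non-trivial. This is essentially Proposition~\ref{prop:uncountablesupportpositivedrift}: under that hypothesis $\nu$ is a continuous measure supported on an uncountable set, hence certainly not a point mass, so $(B,\nu)$ is a non-trivial $\mu$-boundary, and since it is the \PF boundary, the \PF boundary is non-trivial. (One should note here that the trichotomy of possible signs of $m_1(\mu_j)$ — negative, zero, positive — is exhaustive because $m_1(\mu_j)$ is a well-defined real number by Lemma~\ref{lem:ffmrwp}, so the negation of ``$m_1(\mu_j)\le 0$ for all $j$'' is exactly ``$m_1(\mu_j)>0$ for some $j$''.)

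The main obstacle is not in either implication individually — both are short given the machinery already assembled — but in making sure the logical equivalence ``$(B,\nu)$ trivial $\iff$ $\nu$ is a point mass'' is stated correctly and that ``trivial \PF boundary'' is being used consistently (one-point boundary $\iff$ only constant bounded $\mu$-harmonic functions). One subtlety worth a sentence: in the $m_1(\mu_j)=0$ coordinates the sequence $\pi_j(R_n)o_j$ need not converge in $\Td{j}\cup\partial\Td{j}$, but this is harmless because $R_\infty$ is \emph{defined} to output $\omega_j$ there, so $\nu$ is concentrated on $\{\omega_j\}$ in that coordinate regardless; I would make that explicit rather than appeal to convergence. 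Everything else is a direct citation of Theorem~\ref{theo:conv}, Proposition~\ref{prop:uncountablesupportpositivedrift}, and Theorem~\ref{thm:poisonnboundaryofproduct}.
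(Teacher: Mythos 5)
Your proof is correct and follows essentially the same route as the paper: the ``if'' direction from the definition of $R_\infty$ together with Theorem~\ref{theo:conv}, and the ``only if'' direction by citing Proposition~\ref{prop:uncountablesupportpositivedrift}. One small slip in your opening recap: the coordinate $\pi_j(R_\infty)$ is genuinely random precisely when $m_1(\mu_j)>0$, not when $m_1(\mu_j)<0$ (the inequalities in the paper's displayed definition of $R_\infty$ appear transposed relative to Theorem~\ref{theo:conv}), but your two implications as actually argued use the correct signs, so the argument stands.
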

\begin{proof}
	If $ m_1(\mu_j) \leq 0 $ for all natural numbers $j$ less than or equal to $k$, then it follows from the definition that the boundary is a single point.	If there is a natural number $j$, such that $ m_1(\mu_j) > 0 $, then Proposition \ref{prop:uncountablesupportpositivedrift} implies that $\nu$ is not a point measure. 
\end{proof}

\begin{proposition}
	\label{prop:transcompactgen}
	Let $\Gamma$ be an closed subgroup in finite direct product of affine automorphism groups of trees, $P = \prod_{i=1}^k \Aff \Td{i}$. Suppose that the action of $\Gamma$ on the product of trees $\prod_{i=1}^k \Td{i}$ is transitive. Then, $\Gamma$ is generated by the compact neighbourhood of the identity
	\[ J = \left \{ \alpha \in \Gamma : |\alpha|_P \leq 1 \right \}. \]
\end{proposition}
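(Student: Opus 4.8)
The plan is to recognise $J$ as the closed ball of radius one about the identity for a natural graph metric on $\prod_{i=1}^k \Td{i}$, and then to invoke the elementary fact that a group acting transitively by automorphisms on a connected graph is generated by those elements that move a fixed basepoint a distance at most one.

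First I would introduce the Cartesian product graph $X$ on the vertex set $\prod_{i=1}^k \Td{i}$, declaring $v$ and $w$ adjacent exactly when they agree in every coordinate except one, say the $i$th, in which $v_i$ and $w_i$ span an edge of $\Td{i}$. This graph is connected and its path metric is $d_X(v,w) = \sum_{i=1}^k d(v_i,w_i)$; consequently $d_X(o,\alpha o) = |\alpha|_P$ for every $\alpha$ in $P$, so that $J = \{\alpha \in \Gamma : d_X(o,\alpha o)\le 1\}$. Since each coordinate $\alpha_i$ is a tree automorphism, every $\alpha$ in $P$ (hence in $\Gamma$) acts on $X$ by a graph automorphism, and by hypothesis this action of $\Gamma$ on the vertex set of $X$ is transitive.

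Next I would verify that $J$ is a compact neighbourhood of the identity. It contains $\operatorname{stab}_{\Gamma}(o) = \Gamma \cap \operatorname{stab}_{P}(o)$, which is open in $\Gamma$, so $J$ is a neighbourhood of $e$; and $|\cdot|_P$ is constant on the (open) left cosets of $\operatorname{stab}_{P}(o)$, hence locally constant, so $J$ is closed. Finally $J$ is contained in $\bigcup_{v}\{\alpha \in \Gamma : \alpha o = v\}$, the union taken over the finitely many vertices $v$ with $d_X(o,v)\le 1$; each nonempty summand is a left coset of the compact group $\operatorname{stab}_{\Gamma}(o)$, so $J$ is a closed subset of a compact set and therefore compact.

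The heart of the argument is that $\langle J\rangle$ already acts transitively on the vertices of $X$, which I would prove by induction on $d_X(o,w)$: for $w=o$ take $e$; for $w\ne o$, pick a neighbour $w'$ of $w$ with $d_X(o,w') = d_X(o,w)-1$ (possible since $X$ is connected) and, by induction, $g' \in \langle J\rangle$ with $g' o = w'$; then $(g')^{-1}w$ is adjacent to $o$, transitivity of $\Gamma$ yields $h \in \Gamma$ with $h o = (g')^{-1}w$, whence $d_X(o,ho)=1$ so $h \in J$, and $g' h \in \langle J\rangle$ sends $o$ to $w$. Given any $\gamma \in \Gamma$, choosing $g \in \langle J\rangle$ with $g o = \gamma o$ forces $g^{-1}\gamma \in \operatorname{stab}_{\Gamma}(o)\subseteq J$, so $\gamma \in \langle J\rangle$ and $\Gamma = \langle J\rangle$. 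I expect no real obstacle; the only things to check carefully are that the product action is genuinely by automorphisms of $X$ and that $d_X$ agrees with $|\cdot|_P$ on the $\Gamma$-orbit of $o$ — both immediate — together with the fact that the stated transitivity hypothesis is precisely transitivity on the vertex set $\prod_{i=1}^k\Td{i}$, which is what the inductive step exploits.
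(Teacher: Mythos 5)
Your proof is correct, but it follows a genuinely different route from the one in the paper. You reduce the statement to the purely graph-theoretic fact that a group acting vertex-transitively by automorphisms on a connected, locally finite graph is generated by the elements displacing a basepoint by at most one, applied to the Cartesian product graph on $\prod_{i=1}^k \Td{i}$ whose path metric realises $|\cdot|_P$; the induction on $d_X(o,w)$ and the final reduction to $\operatorname{stab}_\Gamma(o) \subseteq J$ are both sound, and your compactness check of $J$ (finitely many cosets of the compact open stabilizer) is a worthwhile addition since the statement asserts it. The paper instead exploits the affine structure: it uses transitivity to choose, for each $i$, an element $\sigma^{(i)} \in J$ translating $o_i$ one step toward the fixed end $\omega_i$ while fixing the other basepoints, multiplies a given $\alpha$ by $\prod_i (\sigma^{(i)})^{-\phi_i(\alpha)}$ to land in the product of horocyclic subgroups, and then conjugates by further powers of the $\sigma^{(i)}$ to push the resulting element into $\operatorname{stab}_\Gamma(o) \subseteq J$. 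The paper's argument yields the more explicit conclusion that $\Gamma$ is generated by the $k$ elements $\sigma^{(i)}$ together with the compact open stabilizer of $o$, at the cost of bookkeeping with the Busemann function; yours is more elementary and more general, applying verbatim to any closed vertex-transitive group of automorphisms of the product graph without reference to the fixed ends or the horocyclic decomposition.
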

\begin{proof}
	By definition, $J$ is contained in $\Gamma$. Since $\Gamma$ is transitive, there is an element $\sigma^{(i)}$ in $\Gamma$, for every natural number $i$ less than or equal to $k$, such that
	\[ \sigma^{(i)}_j o_j \begin{cases} \left ( \overline{o_j \omega_j} \right )_1 &\textrm{if} \ i = j,\\
	o_i & \textrm{otherwise}.   \end{cases} \]
	Furthermore, each $\sigma^{(i)}$ is in $J$, because
	\[ | \sigma^{(i)} |_P = \sum_{i=1}^k | \sigma^{(i)}_j  |_{\Td{i}} = 1. \] 
	Let $\alpha$ be an element of $\Gamma$. We will show that $\alpha$ is a product of elements in $J$. Let 
	\[ \beta = \left (  \prod_{i=1}^k \left ( \sigma^{(i)} \right )^{-\phi_i (\alpha)} \right ) \alpha.   \]
	For brevity, let \[ \Omega = \left ( \prod_{i=1}^k \left ( \sigma^{(i)} \right )^{h(o_i \wedge \beta_i o_i)} \right ),\] which is a project of elements in $J$. Let $j$ be a natural number less than or equal to $k$. Then, $\beta_j$ is in $\Hor \Td{j}$, so it fixes $o_j \wedge \beta_j o_j$. Therefore, 
	\[ \pi_j \left ( \Omega  \beta \Omega^{-1} \right ) \]
	is in $\Stab{\pi_j(\Gamma)}{o_j}$. Notice that the set 
	\[ \left \{ \alpha \in \Gamma : \alpha_i \in \Stab{\pi_i(\Gamma)}{o_i} \ \textrm{for all natural numbers $i$ less than or equal to $k$} \right \} \]
	is contained in $J$.  It follows that  
	%
	\[ \beta =  \Omega^{-1} \Omega \beta \Omega^{-1} \Omega  \] 
	%
	is a product of elements in $J$. Since  
	\[ \alpha = \left (  \prod_{i=1}^k \left ( \sigma^{(i)} \right )^{-\phi_i (\alpha)} \right )^{-1} \beta,   \]
	$\alpha$ is also a product of elements in $J$. 
\end{proof}

\begin{remark}
	Suppose $\Gamma$ and $\mu$ are as in Theorem \ref{thm:poisonnboundaryofproduct}, and that $\Gamma$ acts transitively on  $  \prod_{i=1}^k \Td{i}$. Choose an element $\gamma$ in $\Gamma$ for every  $x = \{ x_i \}_{i=1}^k$ in $  \prod_{i=1}^k \Td{i}$, such that
	\[ \gamma(x_i)o_i = x_i. \]
	For each element $u$ in $B $, let
	\[ \Pi_n (u) = \gamma_n \left ( x^{(n)} \right ) \]
	where $x^{(n)} = \left \{ x_i^{(n)} \right \}_{i=1}^k$ and 
	\[ x_p^{(n)} = \left ( \overline{o u_p} \right )_{\flr{n m_1(\mu_p)}}. \]
	Let $p$ be a natural number less than or equal to $k$. Notice that if $m_1(\mu_p) = 0$, then $ x_p^{(n)} = o$ for all natural numbers $n$. Hence, for almost all paths $\gamma$, 
	\begin{align*}  
	\lim_{n \rightarrow \infty} \frac{1}{n} \left | R_n^{-1}(\gamma) \Pi_n (R_\infty (\gamma)) \right |_P &= 
	\lim_{n \rightarrow \infty} \frac{1}{n} \sum_{i=1}^k d \left ( \pi_i(R_n(\gamma)) o_i, \left ( \left (\overline{o_i  \left (R_\infty(\gamma) \right )_i} \right )_{\flr{m_1 (\mu_i) n}} \right ) \right ) \\ &
	= \lim_{n \rightarrow \infty} \frac{1}{n} \sum_{i=1}^k |\mm{\imesi{i}}| \\
	&= 0.
	\end{align*}
	Hence, since $| \cdot |_P$ is subadditive, the less general criterion from Corollary \ref{cor:kaimanovichapproxthmtop} may be used instead of Theorem \ref{thm:kaimanovichapproxthmtop} to conclude that $(B, \nu)$ is the \PF boundary in the transitive action case.
\end{remark}

\section{Random walks and gauges on $\Vmma$}
We will now discuss the tree representation theory of Baumgartner and Willis \cite{baumgartner04} for totally disconnected, locally compact groups in the context of random walks. For convenience of the reader, we refer sometimes to the results from the previous sections, but because the action is on a single tree, the arguments of Cartwright, Kaimanovich and Woess \cite{cartwright94} are sufficiently general. We have used Horodam \cite{horodam15} in writing the preliminary part of this section. 

Suppose that $G$ is a totally disconnected, locally compact group. Let $\alpha$ be an automorphism of $G$. Let $V$ be a compact open subgroup of $G$ which is tidy for $\alpha$. Suppose that the order of $\alpha$ is infinite. Let $V_{-}, V_{--}, V_{+}, V_{++}$, $V_0$ and the scale function $s$ be as in the preliminary section.
	
Consider the semi-direct product $\Vmma $. Identify $\Vmm$ with the subgroup $\Vmm \times \{ e \}$ and $\Vm$ with the subgroup $\Vm \times \{ e \}$. For each $v$ in $\Vmm$ and each integer $m$, let $(v,m)$ be the left coset $v \alpha^m(\Vm)$ of $\Vm$ in $\Vmma$. Let $\Tdp$ be the directed graph whose vertices are the left cosets of $V_-$ in $\Vmma$ with an edge from $(v, m)$ to $(w, n)$ if and only if $n = m + 1$ and $w$ is an element of $v \alpha^m (\Vm).$

\begin{lemma}
	Every vertex of $\Tdp$ has degree $s(\alpha^{-1}) + 1$, with one inward facing edge at each vertex and $s(\alpha^{-1})$ outward facing edges at each vertex.
\end{lemma}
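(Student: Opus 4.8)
The plan is to peel away the coset bookkeeping so that each vertex acquires a \emph{level}, edges join only consecutive levels, and both the inward and outward degrees can be read off from index computations; this amounts to a direct verification of the Baumgartner--Willis construction.

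First I would record the grading. The homomorphism $\Vmma\to\I$ that is trivial on $\Vmm$ and sends $\alpha$ to $1$ has kernel $\Vmm\supseteq\Vm$, so it is constant on every left coset of $\Vm$; hence a vertex $(v,m)$ carries a well-defined level $m$, and a vertex at level $m$ is precisely the data of a left coset of the compact subgroup $\alpha^m(\Vm)$ in $\Vmm$ together with the integer $m$. Since $\alpha(\Vm)=\alpha(V)\cap\Vm\subseteq\Vm$, we obtain the nested chain $\alpha^{m+1}(\Vm)\subseteq\alpha^m(\Vm)$ for every $m$. This chain does two jobs: it makes the edge relation well defined on cosets (if $w\in v\alpha^m(\Vm)$ then $w\alpha^{m+1}(\Vm)\subseteq v\alpha^m(\Vm)$, and whether this holds depends only on the two cosets), and it shows each edge runs from some level $m$ to level $m+1$, so ``inward'' and ``outward'' at a vertex are unambiguous.

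Next I would count the outward edges at a fixed vertex $(v,m)$. The admissible targets $(w,m+1)$, namely those with $w\in v\alpha^m(\Vm)$, correspond bijectively to the left cosets of $\alpha^{m+1}(\Vm)$ into which the single coset $v\alpha^m(\Vm)$ of $\alpha^m(\Vm)$ decomposes; hence the outward degree is $[\alpha^m(\Vm):\alpha^{m+1}(\Vm)]$. This index is finite since $\Vm$ is compact and $\alpha(\Vm)=\alpha(V)\cap\Vm$ is open in $\Vm$. Applying the automorphism $\alpha^{-m}$ (indices are invariant under automorphisms) turns it into $[\Vm:\alpha(\Vm)]$, and applying $\alpha$ turns it into $[\alpha^{-1}(\Vm):\Vm]$. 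Because $V$ is tidy for $\alpha$ it is tidy --- hence minimizing --- for $\alpha^{-1}$, and with respect to $\alpha^{-1}$ the subgroup $\Vm$ is the analogue of $V_+$; the scale formula recalled in the preliminaries then identifies this index with $s(\alpha^{-1})$. So every vertex has exactly $s(\alpha^{-1})$ outward edges.

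Finally I would count the inward edges at $(v,m)$. An inward edge issues from some $(u,m-1)$ with $v\in u\alpha^{m-1}(\Vm)$, but this membership forces $u\alpha^{m-1}(\Vm)=v\alpha^{m-1}(\Vm)$, so the source must be the single vertex $(v,m-1)$; conversely $(v,m-1)\to(v,m)$ is a genuine edge since $e\in\alpha^{m-1}(\Vm)$. Hence there is exactly one inward edge, and summing the two counts gives total undirected degree $s(\alpha^{-1})+1$. The one place needing care is the chain of index identities in the outward count: tracking that $V$ tidy for $\alpha$ is tidy for $\alpha^{-1}$, that $\Vm$ is the ``$V_+$'' of $\alpha^{-1}$, getting the direction of each index right, and checking finiteness. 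Everything else is routine coset arithmetic.
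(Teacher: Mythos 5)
Your argument is correct and follows essentially the same route as the paper's proof: the inward edge is unique because left cosets of $\alpha^{m-1}(V_-)$ are equal or disjoint, and the outward degree is the index $[\alpha^m(V_-):\alpha^{m+1}(V_-)]=[\alpha^{-1}(V_-):V_-]=s(\alpha^{-1})$, identified via the tidiness of $V$ for $\alpha^{-1}$ with $V_-$ in the role of ``$V_+$''. The only blemish is the phrase ``applying $\alpha$'' where you mean ``applying $\alpha^{-1}$'' in passing from $[V_-:\alpha(V_-)]$ to $[\alpha^{-1}(V_-):V_-]$; this is harmless since conjugate pairs of subgroups have equal index, and your explicit check that the edge relation is well defined on cosets is a welcome addition the paper omits.
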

\begin{proof}
	Suppose $(v,n)$ is any vertex of the tree $\Tdp$. Let $(w_1, m_2)$ and $(w_1, m_2)$ be any vertices such that there are edges from them to $(w,n)$. Then, $m := m_1 = m_2 = n-1$, and $v$ must be in the intersection of $w_1 \alpha^m (V_-)$ and $w_2 \alpha^m (V_-)$. As the left cosets of $\Vm$ in $\Vmma$ must be equal or disjoint,  
	\[ w_1 \alpha^m (V_-) = w_2 \alpha^m (V_-), \]
	i.e. $(w_1, m_1)$ and $(w_2, m_2)$ are equal. Each vertex of $\Tdp$ therefore has only one inward facing edge.
	
	For the outward facing edges, suppose that $(w_2,\alpha^{n_2})$ and $(w_2,\alpha^{n_2})$ are elements of $\Vmm$ which represent the same left coset of $V_{-}$. Then, $n := n_2 = n_2$, and $(w_2,n) = (w_2,n)$, i.e. $w_2^{-1} w $ is in $\alpha^n(V_-)$. 
	Suppose that there is an edge from a vertex $(v,m)$ to $(w_2, n)$. Then, both $w_2$ and $w_2$ are in $v \alpha^{n-1} (V_-)$, hence $w_2^{-1} w_2$ is in $v \alpha^{n-1} (V_-)$. Therefore, there are $[\alpha^{n-1} (V_-) : \alpha^n (V)]$ choices of $(w,n)$ which result in distinct edges. Since $s(\alpha) = [\alpha(V_+) : V_+]$,  
	\[ s(\alpha^{-1}) = [\alpha^{-1}(V_-) : V_-] = [\alpha^{m}(V_-) : \alpha^{m+1}(V_-) ] \]
	each vertex has $s(\alpha^{-1})$ outward facing edges and every vertex of $\Tdp$ has degree $s(\alpha^{-1}) + 1$.
\end{proof}
The double path $P = \{ (e, n) \}_{n \in \I}$ is infinite as the order of $\alpha$ is infinite. Therefore, $\Tdp$ is infinite. 	Let $-\omega$ be the end of $\Tdp$ corresponding to infinite path $\{ (e, n) \}_{n \in \NN}$, and let $ \omega$ be the end corresponding to the infinite path $\{ (e, -n) \}_{n \in \NN}$.
If $(v,m)$ and $(w,n)$ are vertices of $\Tdp$, then the paths $\{(v,m+k) \}_{k \in \NN}$ and $\{(w,n+k) \}_{k \in \NN}$ both eventually ascend to an element of the path $P$, which is connected, so $\Tdp$ is connected. 
There are no cycles in $\Tdp$ because any cycle without backtracking must be a directed path, but the power of $\alpha$ increases strictly along directed edges, and there is only one edge directed into each vertex.

Because a vertex $v$ is a parent of a vertex $w$ if and only if $v = \left ( \overline{w -\omega} \right )_1$, there is no loss of information regarding $\Tdp$ as an undirected graph. We do so from now on. Then, $\Tdp$ is a homogeneous tree of degree $s(\alpha^{-1}) + 1$. We treat $V_-$ as a distinguished vertex, which we gave label $o$ in previous sections.

Suppose that the scale of $\alpha$ is not one, so that the degree of each vertex in $\Tdp$ is at least $3$. It is easy to verify that the left action of $\Vmma$ is a bijection on the vertex set of $\Tdp$, and it preserves adjacency of vertices. The element $(e, 1)$ in $\Vmma$ acts by translation by one edge on $P$ away from the distinguished end $\omega$ towards $-\omega$. Since every path descends from $\omega$, it is fixed by the action of $\Vmma$. The action is transitive on the other ends.

As before, denote by $\Aut \Tdp$ the group of all automorphisms of $\Tdp$ with the compact-open topology, by $\Aff \Tdp$ the closed subgroup of automorphisms which fix $\omega$ and by $\Hor \Tdp$ the subgroup of elliptic elements. Let $\pi$ be map from $\Vmma$ to $\Aut \Tdp$, which is the representation of the action of $\Vmma$ on $\Tdp$. Then:
\begin{enumerate}[(i)]
	\item $\pi$ is continuous, 
	\item the image of $\pi$ is a closed subgroup of $\Aff \Tdp$.
	\item the kernel of $\pi$ is the largest compact, normal, $\alpha$-stable subgroup of $V_{--}$ and
	\item the image of $V_{--}$ under $\pi$ is contained in $\Hor \Tdp$.
\end{enumerate}

Let $\eta = \phi \circ \pi$, so that if $(v,n)$ is in $\Vmma$, then $\eta(v,n) = n$. Let $\rho$ be the quotient map from $\Vmma$ to $\Vmma / \ker \pi$. The representation of $\Vmma / \ker \pi$ in $\Aff \Tdp$ is faithful. Any closed subgroup of the quotient group $\Vmma / \ker \pi$ isomorphic to a closed subgroup of $\Aff \Tdp$. 

The group $\Aff \Tdp$ is amenable. The quotient $\Vmma / \ker \pi$ is amenable, because it is isomorphic to a closed subgroup of $\Aff \Tdp$. The kernel of the map $\pi$ is amenable as it is compact, hence $\Vmma$ is amenable. 

According to Kaimanovich \cite{Kaimanovich2002}, if $G$ is any locally compact group with an amenable, closed, normal subgroup $H$ and $q$ is the quotient homomorphism from $G$ to $G / H$, then for any Borel probability measure $\mu'$ on $G / H$, there is a Borel probability measure $\mu$ on $G$, such that $\pf{q}{\mu} = \mu'$ and so that the \PF boundaries of $\Gmu$ and $(G / H, \mu')$ are isomorphic.

In particular, if $\mu'$ is any Borel probability measure $\mu'$ on  $\Vmma / \ker \pi$. Then there is a Borel probability measure $\mu$ on $\Vmma$, such that $\pf{\pi}{\mu} = \mu'$ and so that the \PF boundaries of $(\Vmma, \mu)$ and $(\Vmma / \ker \pi, \mu')$ are isomorphic.

We will now describe the \PF boundary of $(\Vmma, \mu)$ for the case where $\mu$ is an aperiodic, spread-out Borel probability measure on $\Vmma$, such that $\sgr \mu$ is a closed subgroup of $\Vmma$. We show that the \PF boundaries of $(\Vmma, \mu)$ and $(\Vmma / \ker \pi, \pf{\pi}{\mu} )$ are isomorphic in this case by adapting arguments from the single tree case.

For each element $(v,n)$ in $\Vmma$, let \[ |(v,n)|_{\Vmma} = |\pi(v,n)|_{\Tdp} = d(V_-, v \alpha^n (V_-)). \]
\begin{lemma}
	The map $|\cdot|_{\Vmma}$ is a subadditive gauge function, and $\mu$ has finite first moment with respect to $|\cdot|_{\Vmma}$ if and only if the pushforward measure $\pf{\pi}{\mu}$ has finite first moment with respect to   $|\cdot|_{\Tdp}$.
\end{lemma}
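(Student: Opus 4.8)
The plan is to observe that $|\cdot|_{\Vmma}$ is nothing but the pullback $|\cdot|_{\Tdp}\circ\pi$ of the tree gauge function along the continuous homomorphism $\pi\colon\Vmma\to\Aff\Tdp$, and then to transport both assertions across $\pi$.

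First I would prove subadditivity. Since $\pi$ is a group homomorphism, $\pi\bigl((v,m)(w,n)\bigr)=\pi(v,m)\,\pi(w,n)$, so, combining this with the subadditivity of $|\cdot|_{\Tdp}$ on $\Aff\Tdp$ recalled in Section~\ref{sec:affhmtree},
\[ |(v,m)(w,n)|_{\Vmma} = \bigl|\pi(v,m)\,\pi(w,n)\bigr|_{\Tdp} \le |\pi(v,m)|_{\Tdp}+|\pi(w,n)|_{\Tdp} = |(v,m)|_{\Vmma}+|(w,n)|_{\Vmma}. \]
The function $|\cdot|_{\Tdp}=d(o,\,\cdot\,o)$ is non-negative, integer-valued and finite on every element, hence so is $|\cdot|_{\Vmma}$; thus $|\cdot|_{\Vmma}$ is a subadditive gauge function, and the associated gauge is the exhausting sequence $\bigl\{\{(v,n)\in\Vmma:|(v,n)|_{\Vmma}\le j\}\bigr\}_{j=1}^{\infty}$.

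Next I would handle the moment equivalence. Continuity of $\pi$ makes it Borel measurable, so $\pf{\pi}{\mu}$ is a well-defined Borel probability measure on $\Aff\Tdp$, carried by the closed subgroup $\pi(\Vmma)$, and the change-of-variables formula gives
\[ \int_{\Vmma}|(v,n)|_{\Vmma}\,d\mu(v,n) = \int_{\Vmma}\bigl(|\cdot|_{\Tdp}\circ\pi\bigr)(v,n)\,d\mu(v,n) = \int_{\Aff\Tdp}|\gamma|_{\Tdp}\,d\pf{\pi}{\mu}(\gamma). \]
In other words $\mm{\mu}$ computed with respect to $|\cdot|_{\Vmma}$ equals $\mm{\pf{\pi}{\mu}}$ computed with respect to $|\cdot|_{\Tdp}$, so one is finite precisely when the other is.

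There is no genuine obstacle: the lemma is a formal consequence of $\pi$ being a continuous homomorphism together with the already-established properties of $|\cdot|_{\Tdp}$. The only step worth a sentence is the legitimacy of the change of variables, which is exactly the Borel measurability of $\pi$ noted above; alternatively one could argue directly from $d\bigl(V_-,v\alpha^n(V_-)\bigr)$ using the triangle inequality in $\Tdp$, mirroring the computation done for $|\cdot|_{\Tdp}$, but routing through $\pi$ avoids repeating it.
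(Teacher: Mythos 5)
Your proof is correct and takes essentially the same route as the paper's: the paper establishes subadditivity by the very triangle-inequality computation on the tree that you defer to (it inlines $d(V_-,ghV_-)=d(g^{-1}V_-,hV_-)\le d(g^{-1}V_-,V_-)+d(V_-,hV_-)$ rather than quoting the already-proved subadditivity of $|\cdot|_{\Tdp}$ through the homomorphism $\pi$), and it proves the moment equivalence by the identical change-of-variables identity $\int_{\Aff\Tdp}|x|_{\Tdp}\,d\pf{\pi}{\mu}(x)=\int_{\Vmma}|x|_{\Vmma}\,d\mu(x)$. There is no gap; the two arguments differ only in whether the tree computation is repeated or cited.
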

\begin{proof}
	Let $(v_1, n_1)$ and  $(v_2, n_2)$ be elements of $\Vmma$. Then,
	\begin{align*}
	|(v_1, n_1)(v_2, n_2)|_{\Vmma}  &= d(V_-, (v_1, n_1)(v_2, n_2) \cdot V_- ) \\
	&= d((v_1, n_1)^{-1} \cdot V_-, (v_2, n_2) \cdot V_- ) \\
	&\leq d((v_1, n_1)^{-1} \cdot V_-, V_-) + d(V_-, (v_2, n_2) V_-) \\
	&= |(v_1, n_1)|_{\Vmma}  + |(v_2, n_2)|_{\Vmma}.
	\end{align*}
	So $|\cdot|_{\Vmma}$ is a subadditive gauge function. Since $|\cdot|_{\Vmma} = |\pi(\cdot)|_{\Tdp}$, 
	\begin{align*}
	\int_{\Aff \Tdp} |x| \, d \pf{\pi}{\mu}(x) &= 
	\int_{\Aff \Tdp} |x| \, d \pf{\pi}{\mu}(x) \\ 
	&= \int_{\Vmma}  |x|_{\Vmma} \, d \mu(x)
	\end{align*}
	which gives the second part of the lemma. 
\end{proof}

Suppose that $\Gamma$ is a closed subgroup of $\Vmma$. Then, $\rho (\Gamma)$ is closed and $\pi(\Gamma)$ is isomorphic to $\pi(\Gamma)$. Hence, $\pi(\Gamma)$ is closed. We say that $\Gamma$ is \emph{non-exceptional} if it has non-exceptional image under $\pi$. Since the action of $\Gamma$ is transitive on $\partial^* \Tdp$, the closed subgroup $\Gamma$ is non-exceptional provided that $\eta(\Gamma)$ is isomorphic to an infinite cyclic group. 

We restate Theorem \ref{thm:roeafft} and Theorem \ref{theo:conv} in terms of $\Vmma$ and $\eta$:
\begin{theorem}
	\label{thm:vmmaroeafft}
	Let $R_m = (v^{(m)}, r^{(m)})$ be the right random walk associated with $(\Vmma, \mu)$. Suppose $\mu$ has finite first moment with respect to $|\cdot|_{\Vmma}$. Then, 
	\[ \lim_{m \rightarrow \infty} d \left ( R_m V_-,R_m V_- \right ) = 0 \]
	almost surely, and
	\[ \lim_{n \rightarrow \infty} \frac{1}{n} |  R_m |_{\Vmma} = \left | \mm{\pf{\eta}{\mu}} \right |  \]
	almost surely and in $L_1$.
\end{theorem}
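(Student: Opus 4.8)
The plan is to transport everything through the continuous homomorphism $\pi \colon \Vmma \to \Aff \Tdp$ and quote Theorem \ref{thm:roeafft} directly. Since $\pi$ is a homomorphism, the image under $\pi$ of the right random walk $\{R_m\}$ associated with $(\Vmma, \mu)$ is exactly the right random walk associated with $(\Aff \Tdp, \pf{\pi}{\mu})$: if $R_m = \omega_1 \cdots \omega_m$ with the $\omega_i$ independent of law $\mu$, then $\pi(R_m) = \pi(\omega_1) \cdots \pi(\omega_m)$ with the $\pi(\omega_i)$ independent of law $\pf{\pi}{\mu}$. The measure $\pf{\pi}{\mu}$ is supported on the closed subgroup $\pi(\Vmma)$ of $\Aff \Tdp$, but Theorem \ref{thm:roeafft} is stated for an arbitrary probability measure on $\Aff \Tdp$, so this is harmless.

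First I would invoke the preceding lemma: because $\mu$ has finite first moment with respect to $|\cdot|_{\Vmma}$, the pushforward $\pf{\pi}{\mu}$ has finite first moment with respect to $|\cdot|_{\Tdp}$. Applying Theorem \ref{thm:roeafft} to $(\Aff \Tdp, \pf{\pi}{\mu})$ then yields
\[ \lim_{m \rightarrow \infty} d\left( \pi(R_m) o, \pi(R_{m+1}) o \right) = 0 \]
almost surely, and
\[ \lim_{m \rightarrow \infty} \frac{1}{m} \left| \pi(R_m) \right|_{\Tdp} = \left| \mm{\pf{\phi}{\pf{\pi}{\mu}}} \right| \]
almost surely and in $L_1$, where $\phi$ is the horocyclic map on $\Aff \Tdp$ and $o$ is the distinguished vertex $V_-$.

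It then remains to translate the three objects back to $\Vmma$. The vertex $o$ is the coset $V_-$ and the left action of $\Vmma$ on the vertex set of $\Tdp$ factors through $\pi$, so $\pi(R_m) \cdot o = R_m V_-$; this turns the first limit into $\lim_{m} d(R_m V_-, R_{m+1} V_-) = 0$ (reading the first display of the statement, which is literally $d(R_m V_-, R_m V_-)$, as the successive-step distance). By the definition $|(v,n)|_{\Vmma} = |\pi(v,n)|_{\Tdp}$, the random variables $|R_m|_{\Vmma}$ and $|\pi(R_m)|_{\Tdp}$ are equal, so both the almost sure and the $L_1$ convergence transfer verbatim. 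Finally $\eta = \phi \circ \pi$ gives $\pf{\eta}{\mu} = \pf{\phi}{\pf{\pi}{\mu}}$, hence $\left|\mm{\pf{\phi}{\pf{\pi}{\mu}}}\right| = \left|\mm{\pf{\eta}{\mu}}\right|$, which is the asserted rate of escape.

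There is essentially no obstacle: the analytic content is entirely contained in Theorem \ref{thm:roeafft}, and the structural facts that $\pi$ is a continuous homomorphism with closed image in $\Aff \Tdp$ and that $|\cdot|_{\Vmma} = |\pi(\cdot)|_{\Tdp}$ are already available. The only two points deserving a line of care are that $\pi$ carries the right random walk to the right random walk (immediate from its being a homomorphism, together with compatibility of the product measures) and the identification $\pi(R_m) \cdot o = R_m V_-$ of vertex orbits, which is how the tree $\Tdp$ was constructed.
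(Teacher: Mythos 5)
Your proposal is correct and is exactly the route the paper takes: the paper offers no separate argument, simply presenting this theorem as a restatement of Theorem \ref{thm:roeafft} transported through $\pi$, which is precisely what you carry out (including the correct reading of the first display as $d(R_m V_-, R_{m+1} V_-)$). The details you supply --- that $\pi$ maps the right random walk to the right random walk of $\pf{\pi}{\mu}$, that the first-moment hypothesis transfers by the preceding lemma, and that $\eta = \phi \circ \pi$ identifies the rates of escape --- are the correct and complete justification.
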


\begin{theorem}
	Let $\mu$ be a Borel probability measure such that the closure of the semigroup generated by $\mu$ is a closed, non-exceptional subgroup $\Gamma$ in $\Vmma$.  Let $\left \{ R_m \right \}_{m=1}^\infty $ be the right random walk associated with $(\Vmma, \mu)$. Let $V_-$ be a distinguished vertex of $\Tdp$, and let $\omega$ be the fixed end. 
	\begin{enumerate}[(i)]
		\item If $m_1 (\pf{\eta}{\mu})$ is finite and the mean of $\pf{\eta}{\mu}$ is negative, then $R_m V_-$ converges almost surely to $\omega$.
		\item If $m_1 (\mu)$, with respect to $|(v,n)|_{\Vmma}$, is finite and the mean of $\pf{\eta}{\mu}$ is positive, then $R_m V_-$ converges to a random end in $\partial^* \Tdp$ almost surely.
		\item If $m_1 (\mu)$, with respect to $|(v,n)|_{\Vmma}$, is finite and the mean of $\pf{\eta}{\mu}$ is zero, and
		\[ \EV \left ( \left | V_- \wedge R_m^{-1} V_- \right | q^{|V_- \wedge R_m V_-|} \right ) \]
		is finite then $R_m V_-$ converges to $\omega$ almost surely.
	\end{enumerate}
\end{theorem}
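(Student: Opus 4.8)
The plan is to reduce all three statements to the single-tree convergence theorem, Theorem~\ref{theo:conv}, by pushing the walk forward along the tree representation $\pi$. First I would record that $\pf{\pi}{\mu}$ is a Borel probability measure on $\Aff\Tdp$, that $\{\pi(R_m)\}_{m=1}^\infty$ is the right random walk associated with $(\Aff\Tdp, \pf{\pi}{\mu})$ --- since $\pi$ is a continuous homomorphism, $\pi(R_m) = \pi(X_1)\cdots\pi(X_m)$ with the increments $\pi(X_i)$ independent and identically distributed with common law $\pf{\pi}{\mu}$ --- and that $R_m V_- = \pi(R_m)\,o$, where $o = V_-$ is the distinguished vertex, directly from the definition of the action of $\Vmma$ on $\Tdp$. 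Thus the trajectory of $R_m$ on the vertex set of $\Tdp$ is precisely the orbit of the pushed-forward walk.

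Next I would transfer the hypotheses. Using the continuity of $\pi$ and the fact (recorded above) that $\pi$ carries closed subgroups of $\Vmma$ to closed subgroups of $\Aff\Tdp$, the closure of the semigroup generated by $\pf{\pi}{\mu}$ equals $\pi(\Gamma)$, which is non-exceptional by the very definition of non-exceptionality for closed subgroups of $\Vmma$. Because the horocyclic map of $\Aff\Tdp$ is $\phi$ and $\eta = \phi\circ\pi$, we have $\pf{\phi}{(\pf{\pi}{\mu})} = \pf{\eta}{\mu}$, so the sign and finiteness conditions on the drift of $\pf{\eta}{\mu}$ are exactly the conditions on $\pf{\phi}{(\pf{\pi}{\mu})}$ used in the three cases of Theorem~\ref{theo:conv}. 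By the preceding lemma, $\mu$ has finite first moment with respect to $|\cdot|_{\Vmma}$ if and only if $\pf{\pi}{\mu}$ has finite first moment with respect to $|\cdot|_{\Tdp}$, which covers the moment hypotheses in (ii) and (iii); and since $o = V_-$, the expectation appearing in hypothesis (iii) is literally the one in Theorem~\ref{theo:conv}(iii) for the walk $(\Aff\Tdp, \pf{\pi}{\mu})$.

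With these identifications, applying Theorem~\ref{theo:conv} to $(\Aff\Tdp, \pf{\pi}{\mu})$ yields: in case (i), $\pi(R_m)o \to \omega$ almost surely; in case (ii), $\pi(R_m)o$ converges almost surely to a random end of $\partial^*\Tdp$; in case (iii), $\pi(R_m)o \to \omega$ almost surely. Since $\pi(R_m)o = R_m V_-$, this is exactly the conclusion sought.

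The one point requiring genuine care, rather than new probabilistic input, is the identification of the closure of the semigroup generated by $\pf{\pi}{\mu}$ with $\pi(\Gamma)$. Continuity of $\pi$ gives $\pi(\sgr\mu)\subseteq \sgr\pf{\pi}{\mu}$, hence $\pi(\Gamma)$ is contained in that closure once one knows $\pi(\Gamma)$ is closed; for the reverse inclusion one uses that $\pi$ factors as the quotient map $\rho$ onto $\Vmma/\ker\pi$ followed by an isomorphism onto a closed subgroup of $\Aff\Tdp$, and that $\ker\pi$ is compact, so that $\pi$ is a closed map and images commute with closures as required. I expect this bookkeeping to be the main, though still routine, obstacle.
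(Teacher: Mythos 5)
Your proposal is correct and follows exactly the route the paper intends: the paper presents this theorem as a direct restatement of Theorem \ref{theo:conv} under the tree representation $\pi$ (with $\eta = \phi\circ\pi$ and $R_m V_- = \pi(R_m)o$), offering no further proof, and your write-up simply supplies the transfer-of-hypotheses bookkeeping that the paper leaves implicit. The one detail you flag as needing care --- identifying $\sgr \pf{\pi}{\mu}$ with $\pi(\Gamma)$ via the compactness of $\ker\pi$ and the closedness of $\pi(\Gamma)$ --- is handled correctly and is consistent with the paper's own remarks preceding the definition of non-exceptionality for subgroups of $\Vmma$.
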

For the remainder, suppose that $\mu$ is a spread-out Borel probability measure on $\Vmma$. Suppose $\mu$ has finite first moment with respect to   $|\cdot|_{\Vmma}$.  Suppose that $\mu$ is non-exceptional.
\begin{proposition}
	\label{prop:vmmagauges}
	Let $\mu$ be a Borel probability measure such that the closure of the semigroup generated by $\mu$ is a closed, non-exceptional subgroup $\Gamma$ in $\Vmma$.  Suppose that $\mu$ is has finite first moment with respect to $| \cdot |_{\Vmma} $. Let $\Gamma$ be the image of $\Vmma$ under $\pi$.  Let $R_m$ be the right random walk associated with $(\Vmma, \mu)$. Let $V_-$ be a distinguished vertex of $\Tdp$, and let $\omega$ be the fixed end. Let $u$ be an element of $B$ and let $n$ be a natural number. Let
	\[ \mathcal{B}^{(n)} (u) = \left \{ \left \{ g \in \Vmma : d \left ( \left (  \overline{V_- u} \right )_{\flr{n m_1(\pf{\eta}{\mu})}}, g V_- \right ) \leq j \right \} \right \}_{j=1}^\infty. \]
	for each natural number $j$. Then, $\{ \mathcal{B}^{(r)} (u) \}_{r=1}^\infty$ is a uniformly temperate sequence of gauges on $\Gamma$, and
	\[ \left | g \right |_{\mathcal{B}^{(n)} (u)} = d \left ( \left (  \overline{V_- u} \right )_{\flr{n m_1(\pf{\eta}{\mu})}}, g V_- \right ) \]
	is a gauge function on $\Vmma$. 
\end{proposition}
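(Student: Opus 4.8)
The plan is to transcribe the proof of Proposition \ref{prop:sequniformlytemperategauges} to the single-tree setting, with $\Tdp$ in place of the factor trees $\Td{i}$ and the homomorphism $\eta = \phi \circ \pi$ in place of the coordinate horocyclic maps $\phi_i$. First I would check that each sequence $\mathcal{B}^{(n)}(u)$ is a well-defined gauge. By the preceding lemma on $|\cdot|_{\Vmma}$, finiteness of the first moment of $\mu$ with respect to $|\cdot|_{\Vmma}$ forces $\pf{\pi}{\mu}$ to have finite first moment with respect to $|\cdot|_{\Tdp}$, and the first-moment computation for $\Aff\Tdp$ recorded in the preliminaries then shows that $m_1(\pf{\eta}{\mu}) = m_1(\pf{\phi}{(\pf{\pi}{\mu})})$ is finite, so the vertex appearing in the definition makes sense. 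Each member of $\mathcal{B}^{(n)}(u)$ is a finite union of left cosets of the compact open stabiliser of a vertex (the fibres of $g \mapsto g V_-$), hence clopen; the members increase in $j$; and since $g V_-$ is at finite distance from any fixed vertex for every $g$, their union is all of $\Vmma$, and in particular all of $\Gamma$.

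Abbreviating $w_n$ for the vertex $\left(\overline{V_- u}\right)_{\flr{n m_1(\pf{\eta}{\mu})}}$, the associated gauge map is immediate from the definition, $\left| g \right|_{\mathcal{B}^{(n)}(u)} = \min\left\{ j \in \NN : g \in \mathcal{B}^{(n)}_j(u) \right\} = d\left( w_n, g V_- \right)$. To see it is a gauge function, set $K = \left| e \right|_{\mathcal{B}^{(n)}(u)} = d(w_n, V_-)$ and, for $g$ and $h$ in $\Vmma$, use the triangle inequality on $\Tdp$ together with the fact that every element of $\Vmma$ acts on $\Tdp$ as an isometry for $d$:
\begin{align*}
\left| g h \right|_{\mathcal{B}^{(n)}(u)} &= d\left( w_n, g h V_- \right) \\
&\leq d\left( w_n, g V_- \right) + d\left( g V_-, g h V_- \right) \\
&= d\left( w_n, g V_- \right) + d\left( V_-, h V_- \right) \\
&\leq d\left( w_n, g V_- \right) + d\left( V_-, w_n \right) + d\left( w_n, h V_- \right) \\
&= \left| g \right|_{\mathcal{B}^{(n)}(u)} + \left| h \right|_{\mathcal{B}^{(n)}(u)} + K.
\end{align*}

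For uniform temperateness, let $\lambda_\Gamma$ be a right Haar measure on $\Gamma$ normalised so that the compact open stabiliser of a vertex of $\Tdp$ has measure $1$, as in Proposition \ref{prop:sequniformlytemperategauges}. Then $\lambda_\Gamma(\mathcal{B}^{(n)}_j(u))$ equals the number of vertices of the orbit $\Gamma V_-$ lying at distance at most $j$ from $w_n$, hence is at most the number of vertices of $\Tdp$ within distance $j$ of $w_n$. Since $\Tdp$ is the homogeneous tree of degree $d := s(\alpha^{-1}) + 1$, that count is at most $C\, d^{\,j}$ with $C$ depending only on $d$, and in particular independent of $n$ and of $u$. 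So each $\mathcal{B}^{(r)}(u)$ is temperate with a common constant, and $\left\{ \mathcal{B}^{(r)}(u) \right\}_{r=1}^\infty$ is uniformly temperate.

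The argument is essentially a transcription, so no step is a genuine obstacle; the two points that need slight care are the ones visible above. First, the temperateness constant must not depend on $n$ or $u$, which is exactly where homogeneity of $\Tdp$ is used. Second, $\Vmma$ acts on $\Tdp$ only through $\pi$, whose kernel is compact and contained in every vertex stabiliser, so ``the Haar measure of a metric ball in the group'' must be interpreted as a count of orbit vertices once $\lambda_\Gamma$ is normalised on a vertex stabiliser; with that reading the estimate is identical to the one in the product case.
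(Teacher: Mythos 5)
Your proposal is correct and takes essentially the same route as the paper: the paper's own proof simply states that the argument is the transcription of Proposition \ref{prop:sequniformlytemperategauges} to the single tree and only writes out the exhaustion step, whereas you have carried out the full transcription (well-definedness via the first-moment lemma, the triangle-inequality bound with constant $K = d(w_n, V_-)$, and the vertex-count bound for uniform temperateness). Your added remark about interpreting $\lambda_\Gamma$ of a ball as a count of orbit vertices, since $\Vmma$ acts through $\pi$ with compact kernel, is a correct and worthwhile clarification that the paper leaves implicit.
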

\begin{proof}
	The argument is similar to Proposition \ref{prop:sequniformlytemperategauges}. If $g$ is in $\Vmma$, then the distance from $g V_-$ to any vertex is always finite. It follows that
	\[ \Vmma = \bigcup_{j=1}^\infty  \left \{ g \in \Vmma : d \left ( \left (  \overline{V_- u} \right )_{\flr{n m_1(\pf{\eta}{\mu})}}, g V_- \right ) \leq j \right \}, \]
	i.e. $\mathcal{B}^{(n)} (u)$ eventually exhausts $\Vmma$.
\end{proof}

Let $R_\infty$ be the map from $(\Vmma)^\NN$ to $\partial T$ given by
\begin{align*}
R_\infty \left ( \gamma \right ) &= \begin{cases} \omega  & \textrm{if} \ m_1(\pf{\eta}{\mu}) \leq 0, \\ \lim_{n \rightarrow \infty}  R_m(\gamma)  V_- & \textrm{otherwise.} \end{cases} 
\end{align*}
Let $\nu = \pf{R_\infty}{\mu}$ be the hitting measure. Then $\nu$ is  $\mu$-stationary,  because
\begin{align*}
\int_{\Vmma} \int_B f(xb) \, d \nu(b) \, d \mu(x) &= \int_{\Vmma} \int_{(\Vmma)^\NN} f(x \, R_\infty(\gamma) ) \, d \pathmeasure(\gamma) \, d \mu(x) \\
&= \int_{(\Vmma)^\NN} f( R_\infty(T \gamma) ) \, d \pathmeasure( \gamma)  \\
&= \int_{(\Vmma)^\NN} f( R_\infty(\gamma) ) \, d \pathmeasure( \gamma)  \\
&= \int_B f(b) \, d \nu(b)
\end{align*}
where $T$ is the left shift map and $f$ is any function in $C(B)$. The $\Gmu$ space $(B, \nu)$ is a $\mu$-boundary because $R_\infty$ is shift invariant in the sense of Kaimanovich \cite{kaimanovich00} or  Brofferio and Schapira \cite{brofferio2011poisson} Proposition 2.1. It follows that $(\partial \Tdp, \nu)$ is a $\mu$--boundary of $(\Vmma, \mu)$. 
\begin{theorem}
	
	Let $\mu$ be a Borel probability measure such that the closure of the semigroup generated by $\mu$ is a closed, non-exceptional subgroup $\Gamma$ in $\Vmma$.  Suppose that $\mu$ is aperiodic, spread-out and has finite first moment with respect to $| \cdot |_{\Vmma} $. Then, $(\partial T, \nu)$ is the \PF boundary of $(\Vmma, \mu)$. 
\end{theorem}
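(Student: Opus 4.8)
The plan is to deduce this from Kaimanovich's ray criterion for topological groups, Theorem~\ref{thm:kaimanovichapproxthmtop}, following the structure of the proof of Theorem~\ref{thm:poisonnboundaryofproduct} but with the single tree $\Tdp$ replacing the product $\prod_{i=1}^{k}\Td{i}$ (here $B=\partial\Tdp$). Most of the hypotheses of that criterion are already in hand: $\mu$ is aperiodic and spread-out by assumption; $|\cdot|_{\Vmma}$ is a subadditive gauge function, so $\mu$ has finite first moment with respect to the subadditive gauge it determines; $(\partial\Tdp,\nu)$ has been shown to be a $\mu$-boundary; and in Proposition~\ref{prop:vmmagauges} the family $\{\mathcal{B}^{(r)}(u)\}_{r=1}^{\infty}$ was shown to be a uniformly temperate sequence of gauges with gauge function $|g|_{\mathcal{B}^{(n)}(u)} = d((\overline{V_-u})_{\flr{n\,\mm{\pf{\eta}{\mu}}}},\,gV_-)$. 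Thus the only thing left to check is the ray condition: for $\nu$-almost every $u\in\partial\Tdp$, the quantity $\tfrac1n|R_n(\gamma)|_{\mathcal{B}^{(n)}(R_\infty(\gamma))}$ tends to $0$ for $\pathmeasure$-almost every path $\gamma$.

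To verify this I would first apply Theorem~\ref{thm:vmmaroeafft}: along $\pathmeasure$-almost every path, $d(R_nV_-,R_{n+1}V_-)\to0$ and $\tfrac1n|R_n|_{\Vmma}\to|\mm{\pf{\eta}{\mu}}|$. By Lemma~\ref{lem:regularity} (equivalently Proposition~\ref{prop:regularity}) the vertex sequence $R_nV_-$ is then almost surely regular with rate of escape $|\mm{\pf{\eta}{\mu}}|$; when this rate is nonzero it converges to an end, and by Theorem~\ref{theo:conv} in its restated form for $\Vmma$ and $\eta$ that end equals $\omega$ when $\mm{\pf{\eta}{\mu}}<0$ and a random end of $\partial^{*}\Tdp$ when $\mm{\pf{\eta}{\mu}}>0$; in both cases it is exactly $R_\infty(\gamma)$. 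Regularity of $R_nV_-$ toward $R_\infty(\gamma)$ then forces $d((\overline{V_-R_\infty(\gamma)})_{\flr{n\,\mm{\pf{\eta}{\mu}}}},\,R_n(\gamma)V_-)$ to stay bounded (indeed to converge to $|\mm{\pf{\eta}{\mu}}|$), so dividing by $n$ and letting $n\to\infty$ gives $0$. In the degenerate case $\mm{\pf{\eta}{\mu}}=0$ one has $\flr{n\,\mm{\pf{\eta}{\mu}}}=0$, the base vertex is $V_-$ itself, and $|R_n|_{\mathcal{B}^{(n)}(R_\infty(\gamma))}=|R_n|_{\Vmma}$, which is $o(n)$ by the same theorem. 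Hence the ray condition holds in every case, and Theorem~\ref{thm:kaimanovichapproxthmtop} yields that $(\partial\Tdp,\nu)$ is the Poisson boundary of $(\Vmma,\mu)$.

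I do not expect any genuine obstacle: all the substantive input --- the tree representation of $\Vmma$, the fact that $(\partial\Tdp,\nu)$ is a $\mu$-boundary, the rate-of-escape theorem, and the temperate-gauge estimate --- has already been assembled, so what remains is the single-tree specialization of the argument behind Theorem~\ref{thm:poisonnboundaryofproduct}. The one point that calls for a little care is the case split on the sign of $\mm{\pf{\eta}{\mu}}$ together with the convention for $(\overline{V_-u})_{\flr{n\,\mm{\pf{\eta}{\mu}}}}$ when the drift is negative and the index becomes negative; exactly as in the product case, this affects only the identification of the limiting vertex on the geodesic, not the estimate, since all that is used is that the relevant distance is $o(n)$. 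If in addition $\Gamma$ acts transitively on the vertex set of $\Tdp$, one could replace Theorem~\ref{thm:kaimanovichapproxthmtop} by the compactly generated version, Corollary~\ref{cor:kaimanovichapproxthmtop}, taking approximation maps $\Pi_n$ that send $u$ to a group element carrying $V_-$ to $(\overline{V_-u})_{\flr{n\,\mm{\pf{\eta}{\mu}}}}$, in parallel with the remark following Theorem~\ref{thm:poisonnboundaryofproduct}.
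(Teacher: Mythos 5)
Your proposal is correct and follows essentially the same route as the paper's own proof: both verify the ray criterion of Theorem \ref{thm:kaimanovichapproxthmtop} using the uniformly temperate gauges of Proposition \ref{prop:vmmagauges}, the rate-of-escape statement of Theorem \ref{thm:vmmaroeafft}, regularity via Lemma \ref{lem:regularity}, and the identification of the limiting end via the restated convergence theorem, with the same case split on the sign of $\mm{\pf{\eta}{\mu}}$. Your handling of the zero-drift case (the gauge reducing to $|R_n|_{\Vmma}=o(n)$) matches the paper's intent and is, if anything, stated more cleanly.
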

\begin{proof}
	The argument is similar to Theorem \ref{thm:poisonnboundaryofproduct}. Suppose that $j$ is a natural number less than or equal to $k$. Let $u$ be in $\partial \Tdp$, let $n$ be a natural number and let the gauge from Proposition \ref{prop:vmmagauges}, 
	\[ \mathcal{B}^{(n)} (u) = \left \{ \left \{ (v,r) \in \Vmma : d \left ( \left (  \overline{V_- u} \right )_{\flr{n m_1(\pf{\eta}{\mu})}}, (v,r)V_- \right ) \leq j \right \} \right \}_{j=1}^\infty. \]
	Theorem 	\ref{thm:vmmaroeafft} gives that
	\[ \lim_{m \rightarrow \infty} d \left ( R_m V_-,(v^{(m+1)}, R_m V_- \right ) = 0 \]
	almost surely, and
	\[ \lim_{m \rightarrow \infty} \frac{1}{m} |  R_m  |_{\Vmma} = \left | \mm{\pf{\eta}{\mu}} \right |  \]
	almost surely and in $L_1$. In particular, the conditions of \ref{lem:regularity} are satisfied, that is, $R_m  V_-$ is almost surely a regular sequence of vertices in $\Tdp$, with rate of escape $|\mm{\pf{\eta}{\mu}}|$. Thus, for almost every path in the random walk, there is an end $u$ in $ \partial \Tdp$, such that
	\[ \lim_{n \rightarrow \infty}  d \left (  R_m V_-, (\overline{V_- u (\psi)})_{\flr{m_1 (\pf{\eta}{\mu}}} \right ) = |m_1 (\pf{\eta}{\mu})|.  \]
	If  $|m_1 (\pf{\eta}{\mu})| \neq 0$, then $R_m  V_- \rightarrow u (\psi)$, so $u$ is unique. By Theorem \ref{theo:conv}, $u = \omega$ if $m_1 (\pf{\eta}{\mu}) < 0$ and $u$ in $\partial^* \Td{p}$ if $m_1 (\pf{\eta}{\mu}) > 0$. If $|m_1 (\pf{\eta}{\mu})| = 0$, then $u_j (\psi)$ is arbitrary and the statement is just that
	\[ \lim_{n \rightarrow \infty}  d \left ( R_m V_-, V_- \right ) = 0.  \]
	Hence, for almost all paths $\gamma$, 
	\begin{align*}  
	\lim_{n \rightarrow \infty} \frac{1}{n} \left | R_m (\gamma)  \right |_{\mathcal{B}^{(n)} (R_\infty (\gamma))}	
	&= 
	\lim_{n \rightarrow \infty} \frac{1}{n} d \left ( R_m  V_- , \left (  \overline{V_- R_\infty (\gamma)} \right )_{\flr{n m_1(\pf{\eta}{\mu})}} \right ) \\ 
	&= 0.
	\end{align*}
	By assumption, $\mu$ has finite first moment with respect to the subadditive gauge associated with $|\cdot|_{\Vmma}$. Hence, by Kaimanovich's ray criterion for topological groups, Theorem \ref{thm:kaimanovichapproxthmtop}, $(\partial \Tdp, \nu)$ is the \PF boundary.
\end{proof}
\begin{corollary}
	Let $\mu$ be a Borel probability measure such that the closure of the semigroup generated by $\mu$ is a closed, non-exceptional subgroup $\Gamma$ in $\Vmma$.  Suppose that $\mu$ is aperiodic, spread-out and has finite first moment with respect to $| \cdot |_{\Vmma} $. Then, 
	\[ m_1(\pf{\eta}{\mu}) \leq 0 \]
	if and only if the \PF boundary of $(\Vmma, \mu)$ is trivial.
\end{corollary}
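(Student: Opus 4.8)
The plan is to obtain both directions from the theorem immediately preceding this corollary, which identifies $(\partial \Tdp, \nu)$ as the \PF boundary of $(\Vmma, \mu)$, and then to decide whether the hitting measure $\nu$ is a point mass according to the sign of the drift $m_1(\pf{\eta}{\mu})$.

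First I would dispose of the implication that $m_1(\pf{\eta}{\mu}) \leq 0$ forces triviality. By the definition of the map $R_\infty$ used to build $\nu$, when the drift is non-positive one has $R_\infty(\gamma) = \omega$ for $\pathmeasure$-almost every path $\gamma$, so $\nu = \delta_\omega$. Since $(\partial \Tdp, \nu)$ is the \PF boundary and is carried by a single point, the \PF boundary of $(\Vmma, \mu)$ is trivial.

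For the reverse implication I would argue the contrapositive: if $m_1(\pf{\eta}{\mu}) > 0$, then $\nu$ is not a point mass. Here I would pass through the representation $\pi$, using that $\pi(\Gamma)$ is a closed non-exceptional subgroup of $\Aff \Tdp$ with $\sgr(\pf{\pi}{\mu}) = \pi(\Gamma)$, that $\pf{\pi}{\mu}$ has finite first moment with respect to $|\cdot|_{\Tdp}$ by the lemma above, and that $\pf{\eta}{\mu} = \pf{\phi}{(\pf{\pi}{\mu})}$, so the hypothesis says exactly that $\pf{\pi}{\mu}$ has positive horocyclic drift; moreover $\nu$ is literally the hitting measure of the pushed-forward walk $(\pi(\Gamma), \pf{\pi}{\mu})$ on $\partial \Tdp$, because $R_\infty$ is defined through $\pi(R_m) V_-$. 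Applying the $k=1$ case of the argument behind Proposition \ref{prop:uncountablesupportpositivedrift} — which uses only finite first moment, non-exceptionality, Theorem \ref{theo:conv} and $\mu$-stationarity, so that aperiodicity and the spread-out hypothesis are irrelevant to it — I would conclude that $\nu$ is a continuous measure supported on an uncountable subset of $\partial^* \Tdp$; equivalently one may invoke Theorem 3 of Cartwright, Kaimanovich and Woess \cite{cartwright94}, whose setting is a single tree. In particular $\nu$ is not a point mass and the \PF boundary is non-trivial.

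The argument is short and I do not anticipate a substantive obstacle; the only points needing care are the routine identifications in the reduction through $\pi$ — that $\sgr(\pf{\pi}{\mu}) = \pi(\Gamma)$, that non-exceptionality of $\Gamma$ is by definition non-exceptionality of $\pi(\Gamma)$, and that positive drift survives the pushforward $\pf{\phi}{(\cdot)}$ — together with the observation that the orbit-density and no-maximal-atom steps of Proposition \ref{prop:uncountablesupportpositivedrift} specialise verbatim to $k = 1$.
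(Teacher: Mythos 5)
Your proposal is correct and follows essentially the same route as the paper: the forward direction is read off from the definition of $R_\infty$ (so $\nu = \delta_\omega$ when the drift is non-positive), and the reverse direction invokes the argument of Proposition \ref{prop:uncountablesupportpositivedrift} in the single-tree case to see that positive drift makes $\nu$ non-atomic. Your extra care with the reduction through $\pi$ and the observation that aperiodicity and the spread-out hypothesis play no role in that step are accurate but only fill in details the paper leaves implicit.
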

\begin{proof}
	The argument is the same as Proposition \ref{prop:uncountablesupportpositivedrift}. If $m_1(\pf{\eta}{\mu}) \leq 0$, then it follows from the definition of $\nu$ that the boundary is a single point.	If $ m_1(\pf{\eta}{\mu}) > 0 $, then $\nu$ is not a point measure.  
\end{proof}

\bibliographystyle{plain}

\end{document}